\theoremstyle{plain}
\newtheorem{teo}{Theorem}[section]
\newtheorem{conj}{Conjecture}[section]
\newtheorem{prop}{Proposition}[section]
\newtheorem{defi}{Definition}[section]
\newtheorem{cor}{Corollary}[section]
\theoremstyle{definition}
\newtheorem{exe}{Example}[section]
\newtheorem{obs}{Remark}[section]
\begin{document}
\title[On Bott's residue formula for toric complete intersections]
{On Bott's residue formula for toric complete intersections}

\author{Miguel Rodr\'iguez Pe\~na}
\address{Arnulfo Miguel Rodr\'iguez Pe\~na \\
ICEX-UFMG, Departamento de Matem\'atica,
Av. Ant\^onio Carlos, 6627, 31270-901  Belo Horizonte, MG, Brazil.}
\email{amrp2024@ufmg.br}

\date\today

\begin{abstract}
We determine the number of singularities--counted with multiplicities--of generic distributions of dimension and codimension 
one on smooth complete intersections in compact toric orbifolds with isolated singularities. We also present some applications of these results. First, we analyze the case of regular distributions. As a second application, we establish a Poincar\'e-type inequality that relates the multidegree of a foliation to the multidegrees of an invariant smooth complete intersection curve. Our results are illustrated through several representative examples.
\end{abstract}

\maketitle


\section{Introduction}

The aim of this paper is to study holomorphic distributions of dimension 
and codimension one on smooth complete intersections in a compact toric orbifold through Bott's residue formula. 
For this we will consider homogeneous coordinates, where effective computations are feasible, see \cite{Cox}.
The study of holomorphic distributions and foliations on toric varieties is a recent topic and there are many authors studying it;
we refer to three recent works, namely \cite{FJM}, \cite{Mig2} and \cite{WWA}.
We remark that distributions and foliations have been studied by several authors on complex projective varieties, 
see for instance \cite{Ca,OmCoJa,CCM,GJM,Iza,Jou}. \\

By invoking Bott's residue formula (in the orbifold context), see \cite{BB,Mig,Ding,Sata,Iza,Jou,Mig2}, we deduce the number of singularities, counting multiplicities, of generic distributions of dimension and codimension one on smooth complete intersections in a compact toric orbifold with isolated singularities; see for instance Theorems \ref{aa}, \ref{teo1}, \ref{tte}.
Later, we give some applications of our formulas. First we study the regular distributions; see for instance Theorem \ref{ttee},
Corollaries \ref{cwp}, \ref{coror}, Theorem \ref{teo2}, and Examples \ref{exee}, \ref{eexe}. As a second application, we give a 
Poincar\'e-type inequality, relating the multidegree of a foliation with the multidegrees of an invariant smooth complete intersection;
see Corollary \ref{cccor}, Theorem \ref{teo3}, and Corollary \ref{ccrr}.
We exemplify our results with a range of illustrative examples. \\

In the real context, if $M$ be a closed connected smooth manifold, Thurston showed in \cite{Tu} that 
$M$ admits a $C^{\infty}$ regular codimension one foliation if and only if $\chi(M)=0$.
Regular distributions are quite rare, for example, on the complex projective space, a holomorphic distribution $\mathcal{D}$ 
on $\mathbb{P}^{n}$ is non singular if and only if $n$ is odd, $\mathrm{codim}(\mathcal{D})=1$ and $\deg(\mathcal{D})=0$; see
\cite[p.36]{Coo} and \cite{GHS}. For surfaces, M. Brunella completely classified regular foliations on complex projective surfaces $S$ with Kodaira dimension $\kappa(S)<2$. For surfaces of general type, the classification is open. As a corollary of this classification we have:

\begin{cor} \cite{Bru}
Let $\mathcal{F}$ be a regular one dimensional holomorphic foliation on a smooth projective rational surface $S$. 
Then $S$ is a Hirzebruch surface and $\mathcal{F}$ is induced by a $\mathbb{P}^{1}$-fibration $S\rightarrow\mathbb{P}^{1}$.
\end{cor}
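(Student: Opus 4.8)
The plan is to extract strong numerical restrictions from the vanishing of the singular scheme of $\mathcal{F}$ and then to recognise $\mathcal{F}$ as a rational fibration. Write $T_{\mathcal{F}}\subset TS$ for the tangent bundle of $\mathcal{F}$ and $N_{\mathcal{F}}=\det(TS)\otimes T_{\mathcal{F}}^{*}$ for its normal bundle. Because $\mathcal{F}$ is regular, $\mathrm{Sing}(\mathcal{F})=\emptyset$, so $0\to T_{\mathcal{F}}\to TS\to N_{\mathcal{F}}\to 0$ is an exact sequence of vector bundles; hence $c_{1}(TS)=c_{1}(T_{\mathcal{F}})+c_{1}(N_{\mathcal{F}})$ and $c_{2}(TS)=c_{1}(T_{\mathcal{F}})\cdot c_{1}(N_{\mathcal{F}})$, while the Baum--Bott (Bott residue) formula forces $\int_{S}c_{1}(N_{\mathcal{F}})^{2}=0$ and the Camacho--Sad index theorem forces $C\cdot C=0$ for every compact $\mathcal{F}$-invariant curve $C$. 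Since $S$ is rational, $\chi(\mathcal{O}_{S})=1$ and $q(S)=p_{g}(S)=0$, so Noether's formula reads $K_{S}^{2}+c_{2}(TS)=12$, and combining it with the previous relations gives, for instance, $c_{1}(T_{\mathcal{F}})^{2}=3K_{S}^{2}-24$ and $c_{1}(T_{\mathcal{F}})\cdot c_{1}(N_{\mathcal{F}})=c_{2}(TS)\ge 3$.

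I would first rule out $S=\mathbb{P}^{2}$: a one-dimensional holomorphic foliation of degree $d\ge 0$ on $\mathbb{P}^{2}$ has $d^{2}+d+1\ge 1$ singular points counted with multiplicity, so $\mathbb{P}^{2}$ admits no regular one-dimensional foliation. For the remaining surfaces I would split according to whether the foliated canonical class $K_{\mathcal{F}}=T_{\mathcal{F}}^{*}$ is pseudo-effective. If it is \emph{not}, the structure theorem for foliations by curves on projective surfaces (Miyaoka, Bogomolov--McQuillan, Brunella) produces a compact rational leaf $C$ of $\mathcal{F}$. Since $\pi_{1}(C)=1$, the holonomy of $C$ is trivial, so by Reeb stability every leaf near $C$ is again a compact rational curve; consequently the locus of points lying on a compact leaf is open and closed (any limit cycle of leaves is $\mathcal{F}$-invariant, hence smooth because $\mathcal{F}$ is regular), hence all of $S$, and the leaves are exactly the fibres of a $\mathbb{P}^{1}$-bundle $\pi\colon S\to B$. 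As $S$ is rational, $B\cong\mathbb{P}^{1}$; thus $S$ is a Hirzebruch surface and $\mathcal{F}$ is induced by the ruling $\pi\colon S\to\mathbb{P}^{1}$, which is the assertion.

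It remains to exclude the case in which $K_{\mathcal{F}}$ is pseudo-effective, and here I would appeal to Brunella's classification. For a \emph{regular} foliation, pseudo-effectivity of $K_{\mathcal{F}}$ forces $\kappa(\mathcal{F})\ge 0$, and by the classification the surfaces carrying such foliations are either of non-negative Kodaira dimension, or ruled over a curve of genus $\ge 1$, or non-algebraic; in each of these cases $q(S)>0$ or $K_{S}$ is pseudo-effective, which contradicts the rationality of $S$. The drift of the contradiction is already visible numerically: $K_{S}=K_{\mathcal{F}}+N_{\mathcal{F}}^{*}$ together with $c_{1}(N_{\mathcal{F}})^{2}=0$ and $K_{\mathcal{F}}\cdot c_{1}(N_{\mathcal{F}})=-c_{2}(TS)<0$ pins $c_{1}(N_{\mathcal{F}})$ to a very constrained position relative to the nef and pseudo-effective cones, but the clean exclusion is Brunella's theorem.

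The main obstacle is exactly this last step: showing that a regular one-dimensional foliation on a rational surface cannot have $K_{\mathcal{F}}$ pseudo-effective, equivalently that it possesses a compact (rational) leaf. This is the heart of the classification in \cite{Bru}; it is what rules out a priori candidates such as a regular foliation everywhere transverse to a fibration, or one all of whose leaves are dense. Granting it, the remaining steps --- reducing to a $\mathbb{P}^{1}$-bundle structure, identifying $S$ with a Hirzebruch surface, and recognising $\mathcal{F}$ as the ruling --- are formal consequences of the index identities of the first paragraph together with Reeb stability and the geometry of rational pencils.
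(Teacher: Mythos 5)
There is nothing to compare against inside the paper: the statement is quoted in the introduction as a known consequence of Brunella's classification of regular foliations on projective surfaces with $\kappa(S)<2$, and the paper's entire justification is the citation \cite{Bru}. Measured against that, the useful question is whether your sketch works as an independent argument. Its first half is sound: regularity gives the bundle sequence $0\to T_{\mathcal{F}}\to TS\to N_{\mathcal{F}}\to 0$, Baum--Bott gives $c_1(N_{\mathcal{F}})^2=0$, Camacho--Sad gives $C^2=0$ for invariant compact curves, your identities $c_1(T_{\mathcal{F}})^2=3K_S^2-24$ and $c_1(T_{\mathcal{F}})\cdot c_1(N_{\mathcal{F}})=c_2(TS)\ge 3$ check out, $\mathbb{P}^2$ is correctly excluded, and in the case where $K_{\mathcal{F}}$ is not pseudo-effective the route through Miyaoka/Bogomolov--McQuillan, a smooth rational leaf with $C^2=0$, Reeb stability and the pencil $|C|$ does deliver a Hirzebruch surface ruled by $\mathcal{F}$ (the closedness of the union of compact leaves deserves more care than your parenthetical, but that part is standard).

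The genuine gap is the branch you yourself flag: excluding pseudo-effective $K_{\mathcal{F}}$. As written, this step is both circular and partly incorrect. Circular, because you dispose of it by invoking ``Brunella's classification'', which is precisely the theorem the corollary is a corollary of; if that is allowed, the whole corollary follows in one line (rational surfaces have $\kappa(S)=-\infty<2$) and the first half of your argument is redundant. Incorrect in the details, because pseudo-effectivity of $K_{\mathcal{F}}$ does \emph{not} force $\kappa(\mathcal{F})\ge 0$: by McQuillan's theorem the alternative $\kappa(\mathcal{F})=-\infty$ with $K_{\mathcal{F}}$ pseudo-effective is a Hilbert modular foliation, and such foliations do live on some rational surfaces, so they must be excluded using regularity (they always carry singularities), not ignored. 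Likewise the claim that surfaces carrying foliations with $\kappa(\mathcal{F})\ge 0$ have $q(S)>0$ or $K_S$ pseudo-effective fails as stated: rational elliptic surfaces carry elliptic fibration foliations with $\kappa(\mathcal{F})=1$; what rules these out is again regularity, e.g.\ via your own identity $K_{\mathcal{F}}\cdot c_1(N_{\mathcal{F}})=-c_2(TS)\le -3$ or the fact that $\chi_{top}(S)>0$ forces singular fibres and hence singular points of the foliation. So either present the corollary as the paper does, as an immediate consequence of \cite{Bru}, or carry out the pseudo-effective case honestly (McQuillan's alternative plus a regularity argument against Hilbert modular and elliptic-fibration cases); at present the proposal does neither.
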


In higher dimensions, there is no classification of regular holomorphic foliations. 
However, Touzet proposed the following generalization the corollary above:

\begin{conj} \cite{Dru}
Let $\mathcal{F}$  be a regular holomorphic foliation on a rationally connected projective manifold $X$. 
Then $\mathcal{F}$ is induced by a fibration $X\rightarrow Y$ onto a projective manifold.
\end{conj}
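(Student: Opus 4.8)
This is Touzet's conjecture and it is open in general; the following is the line of attack I would pursue, organized so as to recover the surface case of the corollary above. The objective is twofold: first, to show that a regular foliation $\F$ on a rationally connected manifold $X$ is \emph{algebraically integrable} with rationally connected general leaf; second, to upgrade the resulting family of leaf closures to an honest morphism $X \to Y$ whose fibres are the leaves of $\F$.

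\emph{Step 1 (algebraic integrability).} Because $X$ is rationally connected it is covered by rational curves, and through a general point one may take a free curve $C$ with $T_X|_C$ nef. Restricting the sequence $0 \to T_\F \to T_X \to N_\F \to 0$ to $C$, the quotient $N_\F|_C$ is automatically nef, which only bounds the degree of $T_\F|_C$ from above. The substantive point is to prove that, for a general member $C$ of a suitably chosen covering family, the \emph{subsheaf} $T_\F|_C$ is itself nef --- equivalently, that a general covering curve is not too transverse to $\F$. Once this is available, the algebraicity criterion of Bogomolov--McQuillan and Kebekus--Sol\'a Conde--Toma shows that the leaf through a general point is algebraic, and its closure is rationally connected, being swept out by the rational curves of the family that lie in it. In the one-dimensional case $T_\F$ is a line bundle, nefness on $C$ reduces to $\deg(T_\F|_C) \ge 0$, and the leaves come out as rational curves, matching Brunella's description.

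\emph{Step 2 (from leaves to a fibration).} The closures of the general leaves form an algebraic family, hence a dominant rational map $\varphi \colon X \dashrightarrow Y$ with rationally connected general fibre. I would then run the foliated minimal model program for $(X,\F)$ --- or analyse the normalization of the universal family of leaf closures directly --- to show that $\varphi$ is everywhere defined, that $Y$ is a smooth projective manifold, and that the fibres of $\varphi$ coincide with the leaves of $\F$. Regularity is the decisive hypothesis here: an indeterminacy point of $\varphi$, a leaf closure acquiring a singular point, a fibre of strictly smaller dimension, or a singular point of $Y$ would each force a point at which $T_\F \hookrightarrow T_X$ is not a subbundle, i.e. a singular point of $\F$, contrary to hypothesis. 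Concretely this is a case analysis of the possible exceptional or non-terminal centres arising in the foliated MMP together with a local study of $\F$ near the boundary of the family, ruling each of them out for a smooth foliation.

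\emph{Main obstacle.} The two genuinely difficult inputs are (i) extracting the positivity of $T_\F|_C$ on general covering curves in Step 1 --- without it one cannot even produce algebraic leaves, since a priori $\F$ might be generically transverse to every covering family --- and (ii) in Step 2, excluding \emph{every} degeneration of the leaf-closure family using regularity alone, whose local analysis near the boundary is delicate and is essentially what keeps the conjecture open. The strategy does go through under additional hypotheses --- for example when $X$ is weak Fano, or when $\operatorname{codim}\F = 1$ --- in agreement with the partial results in the literature and with the surface case already covered by the corollary above.
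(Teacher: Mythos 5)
The statement you were asked about is not a theorem of the paper at all: it is Touzet's conjecture, quoted from the literature (Druel's paper on regular foliations on weak Fano manifolds), and the paper gives no proof --- it explicitly records that the conjecture is known only for weak Fano manifolds and is open already in dimension $3$, with the codimension-one threefold case merely reduced (via the foliated MMP) to understanding regular foliations with nef canonical class. So there is no argument in the paper to compare yours against, and your submission, which you yourself frame as a ``line of attack'' rather than a proof, cannot be graded as a correct proof of the statement.

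As a strategy sketch, what you write is broadly consistent with how the experts approach the problem: positivity of $T_{\mathcal{F}}$ along a covering family of rational curves feeding into the Bogomolov--McQuillan / Kebekus--Sol\'a Conde--Toma algebraicity criterion, followed by the foliated MMP to promote the family of leaf closures to an actual fibration. You also correctly isolate the two genuine gaps: nothing forces $T_{\mathcal{F}}|_C$ to be nef (or even of nonnegative degree) on a general covering curve when $\mathcal{F}$ is merely regular, so algebraic integrability is not available; and even granting algebraic leaves, excluding all degenerations of the leaf-closure family using regularity alone is exactly the unresolved local analysis. One correction: your closing claim that the strategy ``goes through \ldots when $\operatorname{codim}\mathcal{F}=1$'' overstates the known results --- in codimension one on threefolds the conjecture is established only in special cases, the general case being reduced to, not resolved by, the nef canonical class situation. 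In short, your text is an accurate survey of the state of the art, but it neither is nor could be the paper's proof, because the paper (correctly) offers none.
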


The conjecture was verified for weak Fano manifolds in \cite{Dru}, and is 
open already in dimension $3$. However, for a regular codimension one foliation on
a projective threefold, the Minimal Model Program can be used to greatly reduce the problem; the conjecture is proved 
in some special cases, and reduces the problem to understanding regular foliations with nef canonical class. 
For more details see \cite{AF,Dru,Fig}. It is important to note that the class of regular and nonintegrable codimension one distributions includes contact structures. Contact structures are currently attracting much interest, both from mathematicians and physicists, 
and their classification is still in progress. In \cite{Dru2} Druel proved that toric contact manifolds are either $\mathbb{P}^{2n+1}$ 
or $\mathbb{P}(\mathcal{T}_{\mathbb{P}^1\times\cdots\times\mathbb{P}^1})$. 
Several authors have studied complex contact structures, we refer \cite{Ca,Dru2,KPS} and references therein. \\

Henri Poincar\'e studied in \cite{Poin} the problem to decide whether a holomorphic
foliation $\mathcal{F}$ on the complex projective plane $\mathbb{P}^2$ admits a rational first integral. Poincar\'e
observed that, in order to solve this problem, it is sufficient to find a bound for the degree of the generic $\mathcal{F}$-invariant curves.
Determining such a bound is known as the \textit{Poincar\'e problem}. Although it is well-known that such a bound does not exist in
general, under certain hypotheses, there are several works about Poincar\'e problem and its generalizations; see for instance 
\cite{BruMe,Bru1,Can,Alc,EstKle,Gal,Pere,So}. It is important to point out that Corollary $\ref{ccrr}$ generalizes the result obtained in 
\cite{Mar} about the Poincar\'e problem for smooth complete intersection curves on projective spaces. 
For instance, this problem has also been studied in \cite{CCG,CJ,CE,Es,Mar}.

\section{Preliminaries}

We first fix some notation. For more details about toric varieties see \cite{Cox2, Cox, Fu1}.
Let $N\simeq\mathbb{Z}^n$ be a free $\mathbb{Z}$-module of rank $n$ and 
$M=\mathrm{Hom}_{\mathbb{Z}}(N, \mathbb{Z})$ be its dual. Let $\Delta$ be a rational complete simplicial $n$-dimensional fan in 
$N_{\mathbb{R}}$, and $\mathbb{P}_{\Delta}$ the $n$-dimensional compact toric orbifold associated, i.e., a complex variety with at most quotient singularities; see Section $\ref{orbif}$.
 
The one-dimensional cones of $\Delta$ form the set $\Delta(1)$. We will assume that $\Delta(1)$ spans $N_{\mathbb{R}}$.
Let $S(\Delta)=\mathbb{C}\left[z_1,\ldots,z_{n+r}\right]$ be the polynomial ring over $\mathbb{C}$ with variables $z_1,\ldots,z_{n+r}$
corresponding to the integral generators $\rho_1,\ldots,\rho_{n+r}\in\Delta(1)$.
For each cone $\sigma \in \Delta$, we get the monomial $z_{\hat{\sigma}}=\prod_{\rho_i\notin \sigma(1)}z_i$, and let
$\mathcal{Z}=V(\{z_{\hat{\sigma}},\,\sigma \in \Delta\})\subset \mathbb{C}^{n+r}$ be the exceptional variety.
The Weil divisor class group $\mathcal{A}_{n-1}(\mathbb{P}_{\Delta})$ is a finitely generated 
abelian group of rank $r$.
Moreover, the group $G=Hom_{\mathbb{Z}}(\mathcal{A}_{n-1}(\mathbb{P}_{\Delta}), \mathbb{C}^*)$ 
acts naturally on $\mathbb{C}^{n+r}$ and leaves $\mathcal{Z}$ invariant.
A toric orbifold $\mathbb{P}_{\Delta}$ can be described as a geometric quotient, see \cite{Cox}:
$$\mathbb{P}_{\Delta}=\frac{\mathbb{C}^{n+r}-\mathcal{Z}}{G}.$$

Each variable $z_i$ in the coordinate ring $S(\Delta)$ corresponds to a torus invariant
irreducible divisor $D_i$ of $\mathbb{P}_{\Delta}$.
The degree of a monomial $\prod_{i=1}^n z_i^{a_i}$ is defined by
$\left[\sum_{i=1}^na_iD_i\right]\in \mathcal{A}_{n-1}(\mathbb{P}_{\Delta})$. 
The polynomial graded ring $S(\Delta)$ is called homogeneous coordinate ring of the toric variety $\mathbb{P}_{\Delta}$.
A polynomial $f$ in the graded piece $S_{\alpha}$ corresponding to $\alpha\in\mathcal{A}_{n-1}(\mathbb{P}_{\Delta})$ is said
to be quasi-homogeneous of degree $\alpha$.

Denote by $\mathcal{O}_{\mathbb{P}_{\Delta}}$ the structure sheaf of $\mathbb{P}_{\Delta}$. Let $\mathcal{O}_{\mathbb{P}_{\Delta}}(D)$ be the coherent sheaf on $\mathbb{P}_{\Delta}$ determined by a Weil divisor $D$. If $\alpha = [D] \in \mathcal{A}_{n-1}(\mathbb{P}_{\Delta})$, then
$$S_{\alpha}\simeq \mathrm{H}^0(\mathbb{P}_{\Delta},\mathcal{O}_{\mathbb{P}_{\Delta}}(D)).$$
If $\mathbb{P}_{\Delta}$ is a complete toric variety, then $S_{\alpha}$ is finite dimensional for every $\alpha$, 
and in particular, $S_0 = \mathbb{C}$.

Let $f_1,\ldots,f_m$ be quasi-homogeneous polynomials, and set $a_i=\deg(f_i)$. They define a closed subset 
$V=V(a_1,\ldots,a_m)$ of $\mathbb{P}_{\Delta}$, because $\mathbb{P}_{\Delta}$ is a geometric quotient.
We say that $V$ is a quasi-smooth complete intersection if $V\cap(\mathbb{C}^{n+r}-\mathcal{Z})$ 
is either empty or a smooth subvariety of codimension $m$ in $(\mathbb{C}^{n+r}-\mathcal{Z})$.
We can relate this notion to a suborbifold; $V$ is a quasi-smooth complete intersection
if and only if $V$ is a suborbifold of $\mathbb{P}_{\Delta}$, for details see \cite{Cox1, Ma}.

Given $\rho_j\in\Delta(1)$, denote by $n_j$ the unique generator of $\rho_j\cap N$. 
Consider the $r$ linearly independent over $\mathbb{Z}$ relations between the $n_{\rho_j}$ 
of type $\sum_ja_{ij} n_j=0$, $i=1,\ldots,r$. 
Then, there are $r$ vector fields $R_i=\sum_ja_{ij}z_j\frac{\partial}{\partial z_j}$, $i=1,\ldots,r$, 
tangent to the orbits of $G$ and $\mathrm{Lie}(G)=\langle R_1,\dots,R_r\rangle$. 
We will call these vector fields $R_i$, the radial vector fields on $\mathbb{P}_{\Delta}$, see \cite{Cox1}. \\

Let $\mathbb{P}_{\Delta}$ be a complete simplicial toric variety  of dimension $n$ where $\Delta(1)$
spans $N_{\mathbb{R}}$. Given $h_i:=\left[D_i\right]\in\mathcal{A}_{n-1}(\mathbb{P}_{\Delta})$, $i=1,\ldots,n+r$, we will denote by 
$\texttt{C}_{j}(h)$ the $j$th elementary symmetric function of the variables $h_1,\ldots, h_{n+r}$. 

\subsection{Examples}

\begin{enumerate}
	\item \label{exe1} Weighted projective spaces. \cite{Cox2} Let $\omega_0,\dots,\omega_n$ be positive integers with 
$gcd(\omega_0,\dots,\omega_n)=1$. Set $\omega=(\omega_{0},\dots,\omega_{n})$. 
Consider the $\mathbb{C}^*$ action on $\mathbb{C}^{n+1}$ given by 
$$t\cdot(z_0,\dots,z_n)=(t^{\omega_0} z_0,\dots, t^{\omega_n} z_n).$$
Then, the weighted projective spaces is defined by
$$\mathbb{P}(\omega) = \frac{\mathbb{C}^{n+1} - \{0\}}{\mathbb{C}^*}.$$ 
If $\omega_0=\cdots=\omega_n=1$, then $\mathbb{P}(\omega)=\mathbb{P}^{n}$. When $\omega_0,\dots,\omega_n$ 
are pairwise coprime, we say that $\mathbb{P}(\omega)$ is a well formed weighted projective space and we have 
$$\mathrm{Sing}(\mathbb{P}(\omega))=\left\{\bar{e}_i\,|\,\omega_i>1\right\}.$$

Moreover, we have $\mathcal{A}_{n-1}(\mathbb{P}(\omega))\simeq\mathbb{Z}$ and $\deg(z_i)=\omega_i$ for all $0\leq i\leq n+1$.
Consequently the homogeneous coordinate ring of $\mathbb{P}(\omega)$ is given by
$\mathrm{S}=\oplus_{\alpha\geq 0}\mathrm{S}_{\alpha}$, where 
$$\mathrm{S}_{\alpha}=\bigoplus_{p_0\omega_0+\cdots+p_n\omega_n=\alpha} \mathbb{C}\cdot {z_0}^{p_0}\dots\,{z_n}^{p_n}.$$

\vskip 0.2cm

 \item \label{exe2} Multiprojective spaces. \cite{Cox2}
Consider $\mathbb{C}^{n+1}\times\mathbb{C}^{m+1}$ in coordinates $(z_1,z_2)=(z_{1,0},\ldots,z_{1,n},$ $z_{2,0},\ldots,z_{2,m})$. 
Now, consider the $\mathbb{C}^*\times\mathbb{C}^*$ action on $\mathbb{C}^{n+1}\times\mathbb{C}^{m+1}$ given by 
$$(\mu,\lambda)\cdot(z_1,z_2)=(\mu z_{1,0},\dots,\mu z_{1,n},\lambda z_{2,0},\dots,\lambda z_{2,m}).$$
Then, the multiprojective space is defined by 
$$\mathbb{P}^n\times\mathbb{P}^m=\frac{\mathbb{C}^{n+1}\times\mathbb{C}^{m+1}-\mathcal{Z}}{\mathbb{C}^*\times\mathbb{C}^*},$$
where $\mathcal{Z}=\left(\left\{0\right\}\times\mathbb{C}^{m+1}\right) \cup \left(\mathbb{C}^{n+1}\times\left\{0\right\}\right)$.
Moreover, we have $\mathcal{A}_{n+m-1}(\mathbb{P}^n\times\mathbb{P}^m)\simeq\mathbb{Z}^2$,
$\deg(z_{1,i})=(1,0)$ for all $0\leq i\leq n$, and $\deg(z_{2,j})=(0,1)$ for all $0\leq j\leq m$.
Consequently the homogeneous coordinate ring of $\mathbb{P}^n\times\mathbb{P}^m$ is given by 
$\mathrm{S}=\oplus_{\alpha,\,\beta\geq 0}\mathrm{S}_{(\alpha,\beta)}$, where 
$$\mathrm{S}_{(\alpha,\beta)}=\bigoplus_{p_0+\cdots+p_n=\alpha\,;\,q_0+\cdots+q_m=\beta} \mathbb{C}\cdot {z_{1,0}}^{p_0}\dots\,{z_{1,n}}^{p_n} 
{z_{2,0}}^{q_0}\dots\,{z_{2,m}}^{q_m},$$
is the ring of bihomogeneous polynomials of bidegree $(\alpha,\beta)$. More generally, we can define 
$\mathbb{P}^{n_1}\times\cdots\times\mathbb{P}^{n_k}$ (in general, a finite product of toric varieties is again a toric variety). \\

 \item \label{exe4} Rational normal scrolls.
Let $a_1,\ldots,a_n$ be integers. Consider the $\mathbb{C}^*\times\mathbb{C}^*$ action on $\mathbb{C}^2\times\mathbb{C}^n$ given by 
$$(\lambda,\mu)(z_{1,1},z_{1,2},z_{2,1},\dots,z_{2,n})=(\lambda z_{1,1}, \lambda z_{1,2}, \mu\lambda^{-a_1} z_{2,1},
\dots,\mu\lambda^{-a_n} z_{2,n}).$$
Then, an $n$-dimensional rational normal scroll $\mathbb{F}(a_1,\dots,a_n)$ is a projective toric manifold defined by 
$$\mathbb{F}(a_1,\dots,a_n)=\frac{\mathbb{C}^2\times\mathbb{C}^n-\mathcal{Z}}{\mathbb{C}^*\times\mathbb{C}^*},$$
where $\mathcal{Z}=\left(\left\{0\right\}\times\mathbb{C}^{n}\right) \cup \left(\mathbb{C}^{2}\times\left\{0\right\}\right)$.
Moreover, we have $\mathcal{A}_{n-1}(\mathbb{F}(a_1,\dots,a_n))\simeq\mathbb{Z}^2$ and the homogeneous coordinate ring of 
$\mathbb{F}(a_1,\dots,a_n)$ is given by $\mathrm{S}=\oplus_{\alpha\in\mathbb{Z},\,\beta\geq 0}\mathrm{S}_{(\alpha,\beta)}$, where 
$$\mathrm{S}_{(\alpha,\beta)}=\bigoplus_{\alpha=p_1+p_2 - \sum_iq_ia_i\,;
\,\beta=\sum_iq_i} \mathbb{C}\cdot {z_{1,1}}^{p_1}{z_{1,2}}^{p_2}{z_{2,1}}^{q_1}\dots\,{z_{2,n}}^{q_n}.$$
In particular, $\deg(z_{1,1})=\deg(z_{1,2})=(1,0)$ and $\deg(z_{2,i})=(-a_i,1)$.

Consider  $1\leq a_1 \leq a_2 \leq \dots \leq a_n$ integers, it is possible to show that $\mathbb{F}(a_1,\dots,a_n)\simeq\mathbb{P}(\mathcal{O}_{\mathbb{P}^1}(a_1)\oplus\dots\oplus\mathcal{O}_{\mathbb{P}^1}(a_n))$. For more details see \cite{Cox2, Re}.
\end{enumerate}

\vspace{0,1cm}

\subsection{Orbifolds} \label{orbif}\cite{Rua, Bla, Sata, Man}. 
An $n$-dimensional orbifold $X$ is a complex space endowed with the following property: each point $x\in X$ possesses an open neighborhood, which is the quotient $U_x=\widetilde{U}/G_x$, where $\widetilde{U}$
is a complex manifold of dimension $n$, and $G_x$ is a properly discontinuous finite group of automorphisms of $\widetilde{U}$.
Thus, locally we have a quotient map $\pi_x:(\widetilde{U},\tilde{x})\rightarrow(U_x,x)$.

Since the possible singularities appearing in an orbifold $X$ are quotient singularities, we have that $X$ is reduced, normal, Cohen-Macaulay,  and with only rational singularities. 

Satake's fundamental idea was to use local smoothing coverings to extend the
definition of usual known objects to orbifolds. For instance, smooth differential $k$-forms
on $X$ are $C^{\infty}$-differential $k$-forms $\omega$ on $X_{\mathrm{reg}}=X \setminus \mathrm{Sing}(X)$ such that the pull-back
$\pi^{\ast}_x\omega$ extends to a $C^{\infty}$-differential $k$-form on every local smoothing covering 
$\pi_x:(\widetilde{U},\tilde{x})\rightarrow(X,x)$ of $X$. Hence, if $\omega$ is a smooth $2n$-form on $X$ with compact support 
$\mathrm{Supp}(\omega)\subset(X, x)$, then by definition
$$\int_X^{orb} \omega=\frac{1}{\# G_x}\int_{\widetilde{U}}\pi^{\ast}\omega.$$

If now $\omega$ has compact support, we use a partition of unity 
$\left\{\rho_{\alpha},U_{\alpha}\right\}_{\alpha\in\Lambda}$, where
$\pi_{\alpha}:(\widetilde{U}_{\alpha},\tilde{x}_{\alpha})\rightarrow(U_{\alpha},x_{\alpha})$ is a local smoothing covering and
$\sum \rho_{\alpha}(x)=1$ for all $x\in\mathrm{Supp}(\omega)$, and set
$$\int_X^{orb} \omega=\sum_{\alpha}\int_{X}^{orb}\rho_{\alpha}\omega.$$

\begin{obs}\cite{Man} Let $X$ be a compact orbifold and 
$Ker(X)=\left\{g\in\coprod_{\alpha\in\Lambda}G_{\alpha}:\,g\cdot x=x,\,\forall x\in X\right\}$. Then 
$$\int_X^{orb} \omega=\frac{1}{\# Ker(X)}\int_{X_{reg}}\omega.$$
\end{obs}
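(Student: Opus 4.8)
The plan is to reduce the identity to a computation in a single orbifold chart and then glue via the partition of unity $\{\rho_{\alpha},U_{\alpha}\}_{\alpha\in\Lambda}$ appearing in the definition of $\int_X^{orb}$. Since $\int_X^{orb}\omega=\sum_{\alpha}\int_X^{orb}\rho_{\alpha}\omega$ and $\sum_{\alpha}\rho_{\alpha}\equiv 1$ on $\mathrm{Supp}(\omega)$, it suffices to prove, for each fixed $\alpha$, that $\int_X^{orb}\rho_{\alpha}\omega=\frac{1}{\#Ker(X)}\int_{X_{reg}}\rho_{\alpha}\omega$, and then sum over $\alpha$.

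First I would fix a local smoothing covering $\pi=\pi_{\alpha}\colon\widetilde{U}\to U=\widetilde{U}/G$, with $\widetilde{U}$ connected and $G=G_{\alpha}$, and isolate the normal subgroup $K=K_{\alpha}\trianglelefteq G$ of automorphisms acting as the identity on $\widetilde{U}$; thus $U=\widetilde{U}/(G/K)$ with $G/K$ acting effectively. The key geometric point is that $\pi^{-1}(\mathrm{Sing}(X)\cap U)$ is exactly the locus of points of $\widetilde{U}$ whose $G$-stabilizer strictly contains $K$, i.e.\ a finite union of fixed-point sets of nontrivial elements of $G/K$; each such set is a proper analytic subset of the connected manifold $\widetilde{U}$, so the whole preimage has Lebesgue measure zero. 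Consequently $\pi$ restricts to an honest covering map $\pi^{-1}(U\cap X_{reg})\to U\cap X_{reg}$ of degree $\#(G/K)=\#G/\#K$. Applying the change-of-variables formula for this covering to the smooth $2n$-form $\pi^{*}(\rho_{\alpha}\omega)$ (which is globally defined and smooth on $\widetilde{U}$ by the definition of a smooth orbifold form), and discarding the measure-zero preimage of $\mathrm{Sing}(X)$, I obtain $\int_{\widetilde{U}}\pi^{*}(\rho_{\alpha}\omega)=\frac{\#G}{\#K}\int_{U\cap X_{reg}}\rho_{\alpha}\omega=\frac{\#G}{\#K}\int_{X_{reg}}\rho_{\alpha}\omega$, the last equality because $\rho_{\alpha}$ is supported in $U$. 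Plugging this into $\int_X^{orb}\rho_{\alpha}\omega=\frac{1}{\#G}\int_{\widetilde{U}}\pi^{*}(\rho_{\alpha}\omega)$ cancels $\#G$ and leaves $\frac{1}{\#K_{\alpha}}\int_{X_{reg}}\rho_{\alpha}\omega$.

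The remaining---and I expect main---difficulty is to show that $\#K_{\alpha}$ is independent of the chart and equals $\#Ker(X)$. For this I would use the compatibility axioms of the orbifold atlas: on a connected overlap the chart embedding intertwines the two group actions and therefore canonically identifies the ineffective subgroups $K_{\alpha}$ and $K_{\beta}$. Since $X$ is connected, the assignment $\alpha\mapsto K_{\alpha}$ (up to this identification) is locally constant, hence constant, and the common subgroup is precisely the global kernel $Ker(X)$ of the orbifold structure. Granting this, summing the per-chart identity over $\alpha$ and using $\sum_{\alpha}\rho_{\alpha}\equiv 1$ gives $\int_X^{orb}\omega=\frac{1}{\#Ker(X)}\int_{X_{reg}}\omega$. (In the reduced case every $K_{\alpha}$ is trivial, and the remark just records that the orbifold integral then agrees with the ordinary integral over the smooth locus; the general statement is the corresponding bookkeeping of the ineffective part.)
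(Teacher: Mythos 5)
The paper does not actually prove this remark: it is quoted from Mann \cite{Man} without argument, so there is no internal proof to compare yours against; I can only assess your sketch on its own terms, and it is essentially the standard (and correct) justification. You reduce to a single chart via the partition of unity, note that off a null set the chart map $\pi_\alpha\colon\widetilde U_\alpha\to U_\alpha$ is a covering of degree $\#G_\alpha/\#K_\alpha$ onto its image, where $K_\alpha$ is the ineffective kernel, cancel $\#G_\alpha$ against that degree, and then identify all the $K_\alpha$ with $Ker(X)$ through the chart compatibilities. Two points deserve a little more care. First, your assertion that $\pi^{-1}(\mathrm{Sing}(X)\cap U)$ is \emph{exactly} the locus of points whose stabilizer strictly contains $K$ is not literally true in general: a quotient by a group generated by pseudo-reflections is smooth, so a point of $X_{\mathrm{reg}}$ can have nontrivial isotropy modulo $K$. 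What you actually need, and what does hold, is weaker: both $\pi^{-1}(\mathrm{Sing}(X)\cap U)$ and the non-free-mod-$K$ locus are contained in the union of the fixed-point sets of the nontrivial elements of $G/K$, a finite union of proper analytic subsets of the connected manifold $\widetilde U$, hence of measure zero upstairs and (taking images under the finite map $\pi$) downstairs; the covering/change-of-variables computation then goes through off a null set, with orientations matching because all maps are holomorphic, so your formula $\int_{\widetilde U}\pi^{*}(\rho_\alpha\omega)=\frac{\#G}{\#K}\int_{X_{\mathrm{reg}}}\rho_\alpha\omega$ is unaffected. Second, the locally-constant argument for $\alpha\mapsto K_\alpha$ uses connectedness of $X$, which is implicit in the statement (otherwise $\#Ker(X)$ would have to be read component by component); granting that, the identification of the common ineffective kernel with $Ker(X)$ as defined in the remark is precisely Mann's convention, and your proof is complete.
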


Analogous to the manifold case, we have the sheaf of $C^{\infty}$-differential $k$-forms and
exterior differentiation, moreover, the concepts of connection and curvature are well defined. 
Stokes' formula, Poincar\'e's duality and Rham's theorem also hold on orbifolds.

\subsection{One-dimensional foliation and codimension one distribution} \cite{Mau, Jou, Mig2, Ro}.
There exists an exact sequence known as the generalized Euler's sequence 
$$0\rightarrow \mathcal{O}_{\mathbb{P}_{\Delta}}^{\oplus
r}\rightarrow\bigoplus_{i=1}^{n+r}\mathcal{O}_{\mathbb{P}_{\Delta}}(D_i)\rightarrow
\mathcal{T}\mathbb{P}_{\Delta}\rightarrow0,$$
\noindent where
$\mathcal{T}\mathbb{P}_{\Delta}=\mathcal{H}om(\Omega_{\mathbb{P}_{\Delta}}^1,\mathcal{O}_{\mathbb{P}_{\Delta}})$ is the so-called Zariski tangent sheaf of $\mathbb{P}_{\Delta}$. 
Let $i:\mathbb{P}_{\Delta \,\mathrm{reg}} \rightarrow \mathbb{P}_{\Delta}$
be the inclusion of the regular part $\mathbb{P}_{\Delta \,\mathrm{reg}}= \mathbb{P}_{\Delta} - \mathrm{Sing}(\mathbb{P}_{\Delta})$.
Since $\mathbb{P}_{\Delta}$ is a complex orbifold then $\mathcal{T}\mathbb{P}_{\Delta} \simeq 
i_{\ast}\mathcal{T}\mathbb{P}_{\Delta \,\mathrm{reg}}$, where $\mathcal{T}\mathbb{P}_{\Delta \,\mathrm{reg}}$ is the tangent sheaf of $\mathbb{P}_{\Delta \,\mathrm{reg}}$; 
see \cite[Appendix A.2]{Cox3}.\\

Let $\mathcal{O}_{\mathbb{P}_{\Delta}}(d)=\mathcal{O}_{\mathbb{P}_{\Delta}}(d_1,\ldots,d_{n+r})
=\mathcal{O}_{\mathbb{P}_{\Delta}}(\sum_{i=1}^{n+r}d_{i}D_{i})$, 
where $\sum_{i=1}^{n+r}d_{i}D_{i}$ is a Cartier divisor. \\ 
Set
$\Omega^1_{\mathbb{P}_{\Delta}}(d)=\Omega^1_{\mathbb{P}_{\Delta}}\otimes \mathcal{O}_{\mathbb{P}_{\Delta}}(d)$.
Tensorizing the dual Euler's sequence by $\mathcal{O}_{\mathbb{P}_{\Delta}}(d)$ we get
\begin{eqnarray*}
0 \rightarrow \Omega^1_{\mathbb{P}_{\Delta}}(d)
\rightarrow \bigoplus_{i=1}^{n+r}\mathcal{O}_{\mathbb{P}_{\Delta}}(d_1,\dots,d_i-1,\dots,d_{n+r})
\rightarrow Cl(\mathbb{P}_{\Delta})\otimes_{\mathbb{Z}}\mathcal{O}_{\mathbb{P}_{\Delta}}(d) \rightarrow 0.
\end{eqnarray*}
For more details, see for instance \cite{Cox1, Cox2}.

\begin{defi} 
A one-dimensional holomorphic foliation $\mathcal{F}$ on $\mathbb{P}_{\Delta}$ of degree 
$\left[\sum_{i=1}^{n+r}d_{i}D_{i}\right]\in\mathcal{A}_{n-1}(\mathbb{P}_{\Delta})$
is a global section of $\mathcal{T}\mathbb{P}_{\Delta}(d_1,\dots,d_{n+r})$.
For simplicity of notation we say that $\mathcal{F}$ has degree $(d_1,\dots,d_{n+r})$.
We will consider one-dimensional holomorphic foliations whose singular scheme has codimension greater than $1$.
\end{defi}

If $H^1\left(\mathbb{P}_{\Delta}, \mathcal{O}_{\mathbb{P}_{\Delta}}
(d_1,\dots,d_{n+r})\right)=0$; see for instance the Demazure vanishing theorem \cite[Theorem $9.2.3$]{Cox2},
then, we have that a one-dimensional holomorphic foliation $\mathcal{F}$ on
$\mathbb{P}_{\Delta}$ of degree $(d_1,\dots,d_{n+r})$ is given by a
polynomial vector field in homogeneous coordinates of the form
$$
X=\sum_{i=1}^{n+r}P_i\frac{\partial}{\partial z_i},
$$
where $P_i$ is a polynomial of degree $(d_1,\dots,d_i+1,\dots,d_{n+r})$
for all $i=1,\dots,n+r$, modulo addition of a vector field of the form
$\sum_{i=1}^{r}g_iR_i$, where $R_1,\dots,R_r$ are the radial vector fields on $\mathbb{P}_{\Delta}$.
We say that $X$ is a quasi-homogeneous vector field. Moreover we have  
$$\mathrm{Sing}(\mathcal{F})=\pi\left(\left\{p\in\mathbb{C}^{n+r} : (R_1\wedge\cdots\wedge R_r\wedge X)(p)=0\right\}\right),$$
where $\pi:\mathbb{C}^{n+r}-{\mathcal{Z}} \rightarrow \mathbb{P}_{\Delta}$ is the canonical projection. 

Let $\mathcal{F}$ be a foliation on $\mathbb{P}_{\Delta}$ and $V = \{f = 0\}$ a quasi-homogeneous
hypersurface. We recall that $V$ is invariant by $\mathcal{F}$ if and only if 
$X(f) = g\cdot f$, where $X$ is a quasi-homogeneous vector field which defines $\mathcal{F}$ in homogeneous coordinates.

We will say that a holomorphic foliation $\mathcal{F}$ is generic if it has at most isolated singularities, 
and that $\mathcal{F}$ is regular if $\mathrm{Sing}(\mathcal{F})=\varnothing$. 

\begin{defi} 
A codimension one holomorphic distribution $\mathcal{D}$ on $\mathbb{P}_{\Delta}$ of degree 
$d=\sum_{i=1}^{n+r}d_{i}\left[D_{i}\right]\in\mathcal{A}_{n-1}(\mathbb{P}_{\Delta})$
is a global section $\omega$ of $\Omega^1_{\mathbb{P}_{\Delta}}(d)$.  
For simplicity of notation we say that $\mathcal{D}$ has degree $d=(d_1,\dots,d_{n+r})$.
A codimension one holomorphic foliation $\mathcal{F}$ on $\mathbb{P}_{\Delta}$ is a distribution that satisfies 
the Frobenius integrability condition $\omega\wedge d\omega=0$.
We will consider codimension one holomorphic foliations whose singular scheme has codimension greater than $1$.
\end{defi}

If $H^1\big(\mathbb{P}_{\Delta}, \Omega^1_{\mathbb{P}_{\Delta}}(d)\big)=0$; see for instance the Bott-Steenbrink-Danilov vanishing theorem
\cite[Theorem $9.3.1$]{Cox2}, then, we have that a codimension one holomorphic distribution $\mathcal{D}$ on
$\mathbb{P}_{\Delta}$ of degree $d$ is given by a polynomial form in homogeneous coordinates of the form
$$
\omega=\sum_{i=1}^{n+r}P_i \, dz_i,
$$
where $P_i$ is a quasi-homogeneous polynomial of degree $(d_1,\dots,d_i-1,\dots,d_{n+r})$ for all $i=1,\dots,n+r$, such that
$i_R \, \omega=0$ for all $R\in\mathrm{Lie}(G)$. 
Note that $\omega$ is a quasi-homogeneous form of degree $d$. We define the singular set of $\mathcal{D}$ by 
$$\mathrm{Sing}(\mathcal{D})=\pi\left(\left\{p\in\mathbb{C}^{n+r}:\,\omega(p)=0\right\}\right),$$
where $\pi:\mathbb{C}^{n+r}-{\mathcal{Z}} \rightarrow \mathbb{P}_{\Delta}$ is the canonical projection. 

Let $V = \{f = 0\}$ be a quasi-homogeneous hypersurface. We recall that $V$ is invariant by $\mathcal{D}$ if and only if 
there is a holomorphic $2$-form $\Theta$ in homogeneous coordinates, such that $\omega\wedge d f = f\cdot\Theta$. 

We will say that a codimension one distribution $\mathcal{D}$ 
is generic if it has at most isolated singularities, 
and that $\mathcal{D}$ is regular if $\mathrm{Sing}(\mathcal{D})=\varnothing$. 

\subsection{Examples}

\begin{enumerate}
	\item Weighted projective spaces.
The Euler's sequence on $\mathbb{P}(\omega)$ is an exact sequence of orbibundles
$$
0\longrightarrow
\underline{\mathbb{C}}\longrightarrow\bigoplus_{i=0}^{n
}\mathcal{O}_{\mathbb{P}(\omega)}(\omega_i) \longrightarrow
\mathcal{T}\mathbb{P}(\omega) \longrightarrow 0,
$$
where $\underline{\mathbb{C}}$ is the trivial line orbibundle on
$\mathbb{P}(\omega)$. The radial vector field is given by
$$R = \omega_0 z_0 \frac{\partial}{\partial z_0}+\cdots+ \omega_n z_n \frac{\partial}{\partial z_n}.$$
 \item Multiprojective spaces. The Euler's sequence on $\mathbb{P}^{n}\times\mathbb{P}^{m}$ is
$$0\longrightarrow \underline{\mathbb{C}^{2}}\,
{\longrightarrow}
\mathcal{O}_{\mathbb{P}^{n}\times\mathbb{P}^{m}}(1,0)^{\oplus n+1}\oplus\mathcal{O}_{\mathbb{P}^{n}\times\mathbb{P}^{m}}(0,1)^{\oplus m+1}
{\longrightarrow}\,
\mathcal{T}(\mathbb{P}^{n}\times\mathbb{P}^{m})
\longrightarrow 0.$$
Here, the radial vector fields are given by
$$R_1 = z_{1,0} \frac{\partial}{\partial z_{1,0}}+\cdots+ z_{1,n} \frac{\partial}{\partial z_{1,n}},\,\,\mbox{and}\,\,\,
R_2 = z_{2,0} \frac{\partial}{\partial z_{2,0}}+\cdots+ z_{2,m} \frac{\partial}{\partial z_{2,m}}.$$

 \item Rational normal scrolls. The Euler's sequence on $\mathbb{F}(a)=\mathbb{F}(a_1,\dots,a_n)$ is
$$0 \rightarrow\mathcal{O}_{\mathbb{F}(a)}^{\oplus
2}\rightarrow\mathcal{O}_{\mathbb{F}(a)}(1,0)^{\oplus 2}\oplus\bigoplus_{i=1}^n
\mathcal{O}_{\mathbb{F}(a)}(-a_i,1)\rightarrow \mathcal{T}\mathbb{F}(a)\rightarrow0.$$
Here, the radial vector fields are given by
$$R_1 = z_{1,1}\frac{\partial}{\partial z_{1,1}}+ z_{1,2}\frac{\partial}{\partial z_{1,2}} 
- \sum_{i=1}^{n}a_i z_{2,i}\frac{\partial}{\partial z_{2,i}},\,\,\mbox{and}\,\,\, 
R_2 = \sum_{i=1}^{n}z_{2,i}\frac{\partial}{\partial z_{2,i}}.$$
 \end{enumerate}

\bigskip

\begin{defi} \cite{Mig, Mig2} Let $\mathbb{P}_{\Delta}$ be an $n$-dimensional compact toric orbifold with at most isolated singularities.
\begin{enumerate}
	\item Let $\mathcal{F}$ be a one-dimensional foliation with isolated zeros and let $X$ be a holomorphic section of 
$\mathcal{T}\mathbb{P}_{\Delta}(d)$ that induces $\mathcal{F}$. The Poincar\'e-Hopf index of $\mathcal{F}$ at a zero $p$ is defined by
$$\mathcal{I}_p^{orb}\left(X\right)=\frac{1}{\#G_p}\mathrm{Res}_{\,\tilde{p}}\left\{\frac{\mathrm{Det}(J\tilde{X})}
{\tilde{X}_1\dots\tilde{X}_n}d\tilde{z}_1\wedge\dots\wedge d\tilde{z}_n\right\},$$
where $\pi_p:(\widetilde{U},\tilde{p})\rightarrow(U,p)$ denotes the local quotient map at $p$:
$\tilde{X}= \pi_p^\ast X$,
$J{\tilde{X}}=\big(\frac{\partial\tilde{X}_i}{\partial\tilde{z}_j}\big)_{1\leq i,\,j\leq n}$, and 
$\,\mathrm{Res}_{\,\tilde{p}}\displaystyle\left\{\frac{\mathrm{Det}\big(J\tilde{X}\,\big)}
{\tilde{X}_1\dots\tilde{X}_n}d\tilde{z}_1\wedge\dots\wedge d\tilde{z}_n\right\}$ is Grothendieck's point residue.\\

  \item Similarly, let $\mathcal{D}$ be a codimension one distribution with isolated zeros and let $\omega$ be a holomorphic section of 
$\Omega^1_{\mathbb{P}_{\Delta}}(d)$ that induces $\mathcal{D}$. The Poincar\'e-Hopf index of $\mathcal{D}$ 
at a zero $p$ is defined by
$$\mathcal{I}_p^{orb}\left(\mathcal{D}\right)=\frac{1}{\#G_p}\mathrm{Res}_{\,\tilde{p}}\left\{\frac{\mathrm{Det}(J\tilde{\omega})}
{\tilde{\omega}_1\dots\tilde{\omega}_n}d\tilde{z}_1\wedge\dots\wedge d\tilde{z}_n\right\},$$
where $\pi_p:(\widetilde{U},\tilde{p})\rightarrow(U,p)$ denotes the local quotient map at $p$:
$\tilde{\omega}= \pi_p^\ast \omega$,
$J{\tilde{\omega}}=\big(\frac{\partial\tilde{\omega}_i}{\partial\tilde{z}_j}\big)_{1\leq i,\,j\leq n}$, and 
$\,\mathrm{Res}_{\,\tilde{p}}\displaystyle\left\{\frac{\mathrm{Det}\big(J\tilde{\omega}\,\big)}
{\tilde{\omega}_1\dots\tilde{\omega}_n}d\tilde{z}_1\wedge\dots\wedge d\tilde{z}_n\right\}$ is Grothendieck's point residue.
\end{enumerate}
\end{defi}

This index was introduced by Satake \cite{Sata} for $C^{\infty}$-vector fields in the orbifold case, see also \cite{Ding, Iza, Jou}.
The Grothendieck point residue embodies the Poincar\'e-Hopf index, the Milnor number and the intersection number of $n$ divisors in
$\mathbb{C}^n$ which intersect properly, and has many uses in deep results such as the Baum-Bott theorem, which is a
generalization of both the Poincar\'e-Hopf theorem and the Gauss-Bonnet theorem in the complex realm, see \cite{Grif, SoR, Su}. 


\section{Bott's residue formula for one-dimensional foliations in compact toric orbifolds.}

Set $k=(k_1,\ldots,k_n)\in\mathbb{N}^{n}$, $\left|k\right|=\sum k_i$ and $\binom{m}{k}=\frac{m!}{k_1!\cdots k_{n}!}$.

\begin{prop} \label{pro1} Let $\mathbb{P}_{\Delta}$ be an $n$-dimensional compact toric orbifold with isolated singularities.
Let $\mathcal{F}$ be a one-dimensional foliation of degree $d=\sum_{i}d_{i} h_i$
with isolated singularities on $\mathbb{P}_{\Delta}$. Then, the singular scheme of $\mathcal{F}$ consists of
$$\#\,\mathrm{Sing}(\mathcal{F})
=\sum_{j=0}^{n}\left\{\sum_{\left|k\right|=n-j}\binom{n-j}{k} \int_{\mathbb{P}_{\Delta}}^{\,orb} 
\texttt{C}_j(h)\cdot \prod_i \big(d_ih_i\big)^{k_i}\right\},$$ 
points counted with multiplicity, where $\texttt{C}_j(h)$ is the $j$th elementary symmetric function
of the variables $h_1,\ldots,h_{n+r}$.
\end{prop}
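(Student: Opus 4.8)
The plan is to compute $\#\,\mathrm{Sing}(\mathcal{F})$ by expressing it as the degree (integral over $\mathbb{P}_\Delta$) of the top Chern class of the bundle whose section is $X$, and then expanding that Chern class via the generalized Euler sequence. Concretely, the sum of Poincar\'e--Hopf indices $\mathcal{I}_p^{orb}(X)$ over the isolated zeros of $X$ equals $\int_{\mathbb{P}_\Delta}^{orb} c_n\big(\mathcal{T}\mathbb{P}_\Delta(d)\big)$; this is the orbifold Baum--Bott / Poincar\'e--Hopf statement recalled in the preliminaries (the local Grothendieck residue at each $\tilde p$ sums, after dividing by $\#G_p$, to the orbifold integral of the Euler class of $\mathcal{T}\mathbb{P}_\Delta(d)$), and since $\mathcal{F}$ has only isolated singularities each local contribution is a genuine intersection multiplicity, so the left side is exactly $\#\,\mathrm{Sing}(\mathcal{F})$ counted with multiplicity.

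Next I would compute $c_n(\mathcal{T}\mathbb{P}_\Delta(d))$ in $\mathcal{A}^\ast(\mathbb{P}_\Delta)\otimes\mathbb{Q}$ using the Euler sequence
$$0\rightarrow \mathcal{O}_{\mathbb{P}_\Delta}^{\oplus r}\rightarrow \bigoplus_{i=1}^{n+r}\mathcal{O}_{\mathbb{P}_\Delta}(D_i)\rightarrow \mathcal{T}\mathbb{P}_\Delta\rightarrow 0.$$
Twisting by $\mathcal{O}_{\mathbb{P}_\Delta}(d)$, where $d=\sum_i d_i D_i$ and $h_i=[D_i]$, multiplicativity of the total Chern class gives
$$c\big(\mathcal{T}\mathbb{P}_\Delta(d)\big)=\frac{\prod_{i=1}^{n+r}\big(1+h_i+d\big)}{(1+d)^{r}},$$
with $d=\sum_i d_i h_i$. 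Extracting the degree-$n$ part: writing $\prod_i(1+(h_i+d))=\sum_{j}\mathtt{C}_j(h+d)$ where $\mathtt{C}_j(h+d)$ is the $j$th elementary symmetric function in the $h_i+d$, and $(1+d)^{-r}=\sum_{\ell\ge 0}\binom{-r}{\ell}d^\ell=\sum_\ell (-1)^\ell\binom{r+\ell-1}{\ell}d^\ell$, one obtains $c_n=\sum_{j+\ell=n}\mathtt{C}_j(h+d)\cdot(-1)^\ell\binom{r+\ell-1}{\ell}d^\ell$. The remaining algebraic task is to show this equals $\sum_{j=0}^n \mathtt{C}_j(h)\sum_{|k|=n-j}\binom{n-j}{k}\prod_i(d_ih_i)^{k_i}$. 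The inner sum $\sum_{|k|=n-j}\binom{n-j}{k}\prod_i(d_ih_i)^{k_i}$ is, by the multinomial theorem, exactly $\big(\sum_i d_i h_i\big)^{n-j}=d^{\,n-j}$; so the claimed formula is simply $c_n=\sum_{j=0}^n \mathtt{C}_j(h)\,d^{\,n-j}$, and I must verify $\sum_{j+\ell=n}\mathtt{C}_j(h+d)(-1)^\ell\binom{r+\ell-1}{\ell}d^\ell=\sum_{j=0}^n\mathtt{C}_j(h)d^{n-j}$. This is a generating-function identity: from $\prod_i(1+t(h_i+d))/(1+td)^{r}$ one notes $\prod_i(1+t(h_i+d))=\prod_i\big((1+td)+th_i\big)$, and dividing by $(1+td)^r$ but there are $n+r$ factors in the numerator, so $(1+td)^{n+r}$ would be the "pure $d$" part; a clean substitution $u=t/(1+td)$ turns $\prod_i\big(1+t(h_i+d)\big)=(1+td)^{n+r}\prod_i(1+uh_i)$, hence $c(\mathcal{T}\mathbb{P}_\Delta(d))=(1+td)^{n}\prod_i(1+uh_i)$ with $u=t/(1+td)$; expanding and collecting the coefficient of $t^n$ yields $\sum_{j}\mathtt{C}_j(h)d^{n-j}$ after checking that the $u\mapsto t$ corrections cancel against the $(1+td)^n$ factor in degree $n$. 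I would carry out this bookkeeping carefully.

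The main obstacle I anticipate is twofold and both points are about rigor rather than ideas. First, the Euler sequence for $\mathbb{P}_\Delta$ is a sequence of reflexive (orbi-)sheaves that is only locally free on the regular locus; to apply Chern--Weil / Baum--Bott in the orbifold setting I must justify working with orbibundles and the orbifold integral $\int^{orb}$, invoking that $\mathbb{P}_\Delta$ has isolated quotient singularities and that $\mathcal{T}\mathbb{P}_\Delta\simeq i_\ast\mathcal{T}\mathbb{P}_{\Delta\,\mathrm{reg}}$, so Chern classes are well defined in $\mathcal{A}^\ast(\mathbb{P}_\Delta)\otimes\mathbb{Q}$ and the pairing with the orbifold fundamental class computes the sum of local Grothendieck residues divided by $\#G_p$ — this is precisely the orbifold Bott residue formula cited as \cite{Sata,Ding,Iza,Jou,Mig2}. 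Second, I must make sure that the "modulo $\sum g_i R_i$" ambiguity in the homogeneous presentation of $X$ does not affect the count: the radial fields $R_i$ span $\mathrm{Lie}(G)$, so changing $X$ by $\sum g_i R_i$ does not change the induced section of $\mathcal{T}\mathbb{P}_\Delta(d)$, hence does not change $\mathrm{Sing}(\mathcal{F})$ nor the indices; I would state this explicitly. Once these foundational points are in place, the proof is: (i) $\#\,\mathrm{Sing}(\mathcal{F})=\int_{\mathbb{P}_\Delta}^{orb}c_n(\mathcal{T}\mathbb{P}_\Delta(d))$ by orbifold Baum--Bott; (ii) $c_n(\mathcal{T}\mathbb{P}_\Delta(d))=\sum_{j=0}^n\mathtt{C}_j(h)\,d^{\,n-j}$ by the Euler-sequence computation above; (iii) $d^{\,n-j}=\big(\sum_i d_ih_i\big)^{n-j}=\sum_{|k|=n-j}\binom{n-j}{k}\prod_i(d_ih_i)^{k_i}$ by the multinomial theorem; combining (i)--(iii) gives the stated formula. $\hfill\bl$
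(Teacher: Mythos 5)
Your proposal is correct and follows essentially the same route as the paper: the orbifold Bott-type residue theorem gives $\#\,\mathrm{Sing}(\mathcal{F})=\int_{\mathbb{P}_{\Delta}}^{\,orb} c_{n}\big(\mathcal{T}\mathbb{P}_{\Delta}(d)\big)$, the generalized Euler sequence gives $c_j(\mathcal{T}\mathbb{P}_{\Delta})=\texttt{C}_j(h)$, and the multinomial theorem finishes the computation. The only cosmetic difference is that the paper applies the rank-$n$ twist formula $c_n(E\otimes L)=\sum_{j}c_j(E)\,c_1(L)^{n-j}$ directly, whereas you twist the Euler sequence first and verify the same identity by the substitution $u=t/(1+td)$, which does close cleanly: $(1+td)^{n}\prod_i(1+uh_i)=\sum_j\texttt{C}_j(h)\,t^{j}(1+td)^{n-j}$ has $t^{n}$-coefficient exactly $\sum_{j}\texttt{C}_j(h)\,d^{\,n-j}$.
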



\begin{proof} It follows from theorem \cite[Theorem 1]{Mig} that
$$\#\,\mathrm{Sing}(\mathcal{F})
=\int_{\mathbb{P}_{\Delta}}^{\,orb} c_{n}\big(\mathcal{T}\mathbb{P}_{\Delta}\otimes\mathcal{O}_{\mathbb{P}_{\Delta}}(d)\big).$$

\noindent Using the generalized Euler's sequence
$$0\rightarrow \mathcal{O}_{\mathbb{P}_{\Delta}}^{\oplus r}\rightarrow\bigoplus_{i=1}^{n+r}\mathcal{O}_{\mathbb{P}_{\Delta}}(D_i)
\rightarrow\mathcal{T}\mathbb{P}_{\Delta}\rightarrow0,$$
we have 
$$c\left(\mathcal{T}\mathbb{P}_{\Delta}\right)
=c\left(\mathcal{O}^{\oplus r}_{\mathbb{P}_{\Delta}}\right)c\left(\mathcal{T}\mathbb{P}_{\Delta}\right)
=c\left(\bigoplus_{i=1}^{n+r} \mathcal{O}_{\mathbb{P}_{\Delta}}(D_i)\right),$$
where $c$ denotes the total Chern class. 
Note that $h_i=c_{1}(\mathcal{O}_{\mathbb{P}_{\Delta}}(D_i))=\left[D_i\right]$. Then
\begin{eqnarray*}
c_{n}\big(\mathcal{T}\mathbb{P}_{\Delta}\otimes\mathcal{O}_{\mathbb{P}_{\Delta}}(d)\big) &=&
\sum_{j=0}^{n}c_{j}\big(\mathcal{T}\mathbb{P}_{\Delta}\big)c_{1}\big(\mathcal{O}_{\mathbb{P}_{\Delta}}(d)\big)^{n-j} \nonumber\\
&=&\sum_{j=0}^{n}c_{j}\left(\bigoplus_{i=1}^{n+r} \mathcal{O}_{\mathbb{P}_{\Delta}}(D_i)\right)
\left(\sum_{i=1}^{n+r}c_1\big(\mathcal{O}_{\mathbb{P}_{\Delta}}(d_iD_i)\big)\right)^{n-j} \nonumber\\
&=& \sum_{j=0}^{n}c_{j}\left(\bigoplus_{i=1}^{n+r} \mathcal{O}_{\mathbb{P}_{\Delta}}(D_i)\right)
\left(\sum_{i=1}^{n+r} d_ih_i \right)^{n-j}.
\end{eqnarray*}
On the other hand, we have  
\begin{eqnarray*}
c\left(\bigoplus_{i=1}^{n+r} \mathcal{O}_{\mathbb{P}_{\Delta}}(D_i)\right)=
\prod_{i=1}^{n+r}c\big(\mathcal{O}_{\mathbb{P}_{\Delta}}(D_i)\big)=
\prod_{i=1}^{n+r}\Big(1+c_1\big(\mathcal{O}_{\mathbb{P}_{\Delta}}(D_i)\big)\Big)
=\prod_{i=1}^{n+r}(1+h_{i})=\sum_{j=0}^{n+r}\texttt{C}_{j}(h).
\end{eqnarray*}
Then 
\begin{eqnarray*}
c_{n}\big(\mathcal{T}\mathbb{P}_{\Delta}\otimes\mathcal{O}_{\mathbb{P}_{\Delta}}(d)\big)
=\sum_{j=0}^{n}\texttt{C}_j(h)\left(\sum_{i=1}^{n+r} d_ih_i \right)^{n-j}
=\sum_{j=0}^{n}\left\{\sum_{\left|k\right|=n-j}\binom{n-j}{k}\texttt{C}_j(h)
\prod_i \big(d_ih_i\big)^{k_i}\right\},
\end{eqnarray*}
and so we get  
\begin{eqnarray*}
\#\,\mathrm{Sing}(\mathcal{F})
=\sum_{j=0}^{n}\left\{\sum_{\left|k\right|=n-j}\binom{n-j}{k} \int_{\mathbb{P}_{\Delta}}^{\,orb} 
\texttt{C}_j(h)\cdot \prod_i \big(d_ih_i\big)^{k_i}\right\}.
\end{eqnarray*}
\end{proof}

\begin{cor} Let $\mathbb{P}_{\Delta}$ be an $n$-dimensional compact toric manifold with isolated singularities.
Let $\mathcal{F}$ be a one-dimensional generic foliation of degree $d=\sum_{i}d_{i} h_i$ on $\mathbb{P}_{\Delta}$. 
Then, $\mathcal{F}$ is singular if $\gcd\left\{d_i\right\} \nmid \texttt{C}_n(h)$.
\end{cor}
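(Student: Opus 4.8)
The plan is to read the divisibility directly off the counting formula of Proposition \ref{pro1}, after reducing it modulo $\delta:=\gcd\{d_1,\dots,d_{n+r}\}$. First I would write $d_i=\delta e_i$ with $e_i\in\mathbb{Z}$ and inspect, term by term, the expression
$$\#\,\mathrm{Sing}(\mathcal{F})=\sum_{j=0}^{n}\sum_{|k|=n-j}\binom{n-j}{k}\int_{\mathbb{P}_{\Delta}}^{\,orb}\texttt{C}_j(h)\prod_i\big(d_ih_i\big)^{k_i}.$$
For every $j<n$ the multi-index $k$ satisfies $|k|=n-j\geq 1$, so $\prod_i(d_ih_i)^{k_i}=\big(\prod_i d_i^{k_i}\big)\prod_i h_i^{k_i}=\delta^{\,n-j}\big(\prod_i e_i^{k_i}\big)\prod_i h_i^{k_i}$; and since $\mathbb{P}_{\Delta}$ is smooth, $\int_{\mathbb{P}_{\Delta}}\texttt{C}_j(h)\prod_i h_i^{k_i}$ is an intersection number of the integral divisor classes $h_i=[D_i]$, hence an integer. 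Therefore each summand with $j<n$ is $\delta^{\,n-j}$ (in particular $\delta$) times an integer. The only term not visibly divisible by $\delta$ is the one with $j=n$, $|k|=0$, which is exactly $\int_{\mathbb{P}_{\Delta}}^{\,orb}\texttt{C}_n(h)$ (identified, as in the remark after Proposition \ref{pro1}, with the integer $\chi(\mathbb{P}_{\Delta})$ under $H^{2n}(\mathbb{P}_{\Delta},\mathbb{Z})\simeq\mathbb{Z}$, since $\texttt{C}_n(h)=c_n(\mathcal{T}\mathbb{P}_{\Delta})$). Thus I obtain the congruence
$$\#\,\mathrm{Sing}(\mathcal{F})\equiv\int_{\mathbb{P}_{\Delta}}^{\,orb}\texttt{C}_n(h)\pmod{\delta}.$$

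To conclude, suppose $\mathcal{F}$ is \emph{not} singular. Being generic, it then has empty singular set, hence $\#\,\mathrm{Sing}(\mathcal{F})=0$: the Poincar\'e--Hopf index of a holomorphic foliation at an isolated zero is a positive integer, so a vanishing weighted count of singularities is possible only when there are no singularities at all. The congruence above then forces $\delta\mid\int_{\mathbb{P}_{\Delta}}^{\,orb}\texttt{C}_n(h)=\texttt{C}_n(h)$. Taking the contrapositive gives exactly the statement: $\gcd\{d_i\}\nmid\texttt{C}_n(h)$ forces $\mathcal{F}$ to be singular.

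The step that needs the most care — and the reason the statement is phrased for a \emph{manifold} — is the integrality of the numbers $\int_{\mathbb{P}_{\Delta}}\texttt{C}_j(h)\prod_i h_i^{k_i}$ used to handle the terms with $j<n$: smoothness of $\mathbb{P}_{\Delta}$ is precisely what guarantees these are integers, so that reduction modulo $\delta$ makes sense; on a genuine orbifold the orbifold integral generally produces denominators and the argument breaks. I would also make explicit the identification of the top class $\texttt{C}_n(h)$ with the integer $\int_{\mathbb{P}_{\Delta}}\texttt{C}_n(h)$, so that the divisibility assertion "$\gcd\{d_i\}\nmid\texttt{C}_n(h)$" is unambiguous.
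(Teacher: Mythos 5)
Your proposal is correct and follows essentially the same route as the paper: setting the count from Proposition \ref{pro1} to zero for a nonsingular generic foliation, observing that every term with $|k|\geq 1$ is divisible by $\gcd\{d_i\}$ while the remaining term is the integer $\int_{\mathbb{P}_{\Delta}}\texttt{C}_n(h)$, and taking the contrapositive. Your extra remarks on integrality of the intersection numbers in the smooth case simply make explicit the step the paper summarizes as ``$=\texttt{C}_n(h)+\sum_i a_i d_i$ for some $a_i\in\mathbb{Z}$.''
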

\begin{proof}
By Proposition \ref{pro1}, if $\mathrm{Sing}\left(\mathcal{F}\right)=\varnothing$, then we can write
$$0=\sum_{j=0}^{n}\left\{\sum_{\left|k\right|=n-j}\binom{n-j}{k}
\int_{\mathbb{P}_{\Delta}} \texttt{C}_j(h)\cdot \prod_i \big(d_i h_i\big)^{k_i}\right\} 
=\texttt{C}_n(h) + \sum_{i}a_i d_i,$$
for some $a_i\in\mathbb{Z}$. Then $\gcd\left\{d_i\right\} \mid \texttt{C}_n(h)$.
\end{proof}

Let us consider some examples in blow-ups:

\begin{exe} Let $\mathbb{P}_{\Delta}=\mathrm{Bl}_p(\mathbb{P}^n)$ be the blow-up of 
$\mathbb{P}^n$ at a point. We can put 
$$\mathrm{Pic}\big(\mathrm{Bl}_p(\mathbb{P}^n)\big)=\mathbb{Z}H\oplus\mathbb{Z}E,$$ 
where $H$ denotes the pullback of the hyperplane class in $\mathbb{P}^n$, and $E$ is the exceptional divisor. 
Since 
$$\mathcal{A}\big(\mathrm{Bl}_p(\mathbb{P}^n)\big)\simeq\frac{\mathbb{Z}\left[H,E\right]}
{\left\langle H\cdot E,H^n+(-1)^nE^n\right\rangle},$$
we have $H\cdot E=0$ and $H^n=(-1)^{n+1}E^n=1$, see \cite{EiHa}. 

The classes $h_i=\left[D_i\right]\in\mathcal{A}_{n-1}(\mathbb{P}_{\Delta})$, 
can be represented in $\mathrm{Pic}\big(\mathrm{Bl}_p(\mathbb{P}^n)\big)$ by the matrix
$$\bordermatrix{& h_1 & \cdots & h_n & h_{n+1} & h_{n+2} \cr
                &  1  & \cdots &  1  &    1    & 0  \cr
                &  -1  & \cdots &  -1  &    0    & 1 \cr}$$
\\								
\noindent That is, $h_1=\cdots=h_n=H-E$, $h_{n+1}=H$ and $h_{n+2}=E$, see \cite{Cox2, MaA}. 

For example, for $n=2$, let $\mathcal{F}$ be a holomorphic foliation of degree $d=(d_1,d_2)=d_1H+d_2E$ on $\mathrm{Bl}_p(\mathbb{P}^2)$.
Since $\texttt{C}_0(h)=1$, $\texttt{C}_1(h)=3H-E$ and $\texttt{C}_2(h)=3H^2-E^2=4$, we have
$$\#\,\mathrm{Sing}(\mathcal{F})=\sum_{j=0}^2\texttt{C}_j(h)d^{\,2-j}=d_1^2-d_2^2+3d_1+d_2+4.$$
\end{exe}

\begin{exe}
Let $\mathbb{P}_{\Delta}=\mathrm{Bl}_{p,q}(\mathbb{P}^3)$ be the blow-up of 
$\mathbb{P}^3$ at $2$ points. We can write 
$$\mathrm{Pic}\big(\mathrm{Bl}_{p,q}(\mathbb{P}^3)\big)=\mathbb{Z}H\oplus\mathbb{Z}E_1\oplus\mathbb{Z}E_2,$$ 
where $H$ denotes the pullback of the hyperplane class in $\mathbb{P}^3$, and $E_1,E_2$ the exceptional divisors. 
Here, we have the following intersection formulas: 
$$H^3=E_i^3=1,\,\,H\cdot E_i=0,\,\,\,\mbox{and}\,\,\,\,E_i\cdot E_j=0\,\,\,\mbox{for}\,\,\, i\neq j.$$

Now, let $\mathcal{F}$ be a one-dimensional foliation of degree $d=d_0H+d_1E_1+d_2E_2$ on $\mathrm{Bl}_{p,q}(\mathbb{P}^3)$.
Since 
$\texttt{C}_0(h)=1$, 
$\texttt{C}_1(h)=-K_{\mathrm{Bl}_{p,q}(\mathbb{P}^3)}=4H-2E_1-2E_2$, 
$\texttt{C}_2(h)=c_2(\mathbb{P}^3)=6H^2$; see \cite[p.609]{Grif}, 
and $\texttt{C}_3(h)=8$, 
we obtain 
$$\#\,\mathrm{Sing}(\mathcal{F})=\sum_{j=0}^3\texttt{C}_j(h)d^{\,3-j}=
d_0^3+d_1^3+d_2^3+4d_0^2-2d_1^2-2d_2^2+6d_0+8.$$
\end{exe}

\begin{exe}
Let $\pi:\mathbb{P}_{\Delta}=\mathrm{Bl}_L(\mathbb{P}^3)\rightarrow\mathbb{P}^3$ be the blow-up of 
$\mathbb{P}^3$ at a line $L\cong \mathbb{P}^1$. We can write 
$$\mathrm{Pic}\big(\mathrm{Bl}_L(\mathbb{P}^3)\big)=\mathbb{Z}H\oplus\mathbb{Z}E,$$ 
where $H$ denotes the pullback of the hyperplane class in $\mathbb{P}^3$, and 
$E\cong\mathbb{P}(\mathcal{N}_{\mathbb{P}^1/\mathbb{P}^3})\cong\mathbb{P}^1\times\mathbb{P}^1$ the exceptional divisor. 
Here, we have the following intersection formulas: 
$$H^3=1,\,\,H^2\cdot E=0,\,\,H\cdot E^2=-1,\,\,\,\mbox{and}\,\,\,\,E^3=-2.$$

Now, let $\mathcal{F}$ be a one-dimensional foliation of degree $d=d_1H+d_2E$ on $\mathrm{Bl}_L(\mathbb{P}^3)$.
Since 
$\texttt{C}_0(h)=1$, 
$\texttt{C}_1(h)=-K_{\mathrm{Bl}_L(\mathbb{P}^3)}=4H-E$, 
$\texttt{C}_2(h)=\pi^{\ast}(c_2(\mathbb{P}^3)+\eta_{_L})-\pi^{\ast}c_1(\mathbb{P}^3)\cdot E=7H^2-4H\cdot E$; see \cite[p.609]{Grif}, 
and $\texttt{C}_3(h)=6$, 
we obtain 
$$\#\,\mathrm{Sing}(\mathcal{F})=\sum_{j=0}^3\texttt{C}_j(h)d^{\,3-j}=
d_1^3-3d_1d_2^2-2d_2^3+4d_1^2-2d_2^2+2d_1d_2+7d_1+4d_2+6.$$
\end{exe}

\vspace{0,1cm}

In \cite[p.37]{Bru}, M. Brunella shows that if $\mathcal{F}$ is a regular holomorphic foliation on a Hirzebruch surface
(a rational normal scroll of dimension $2$), then $\mathcal{F}$ is a $\mathbb{P}^1$-fibration over $\mathbb{P}^1$.
For rational normal scroll of dimension greater $2$, we have: 

\begin{teo} \label{ttee}
There are no one-dimensional regular foliations in a rational normal scroll $\mathbb{P}_{\Delta}=\mathbb{F}(a)=
\mathbb{F}(a_1,\ldots,a_n)$, $a_1,\ldots,a_n\in\mathbb{Z}$, $n>2$. 
\end{teo}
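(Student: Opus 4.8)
The plan is to argue by contradiction: assume $\mathcal{F}$ is a regular one-dimensional foliation on $\mathbb{F}(a)=\mathbb{F}(a_1,\dots,a_n)$ with $n>2$, and derive an impossible numerical constraint from Proposition \ref{pro1}. Since $\mathbb{F}(a)$ is a smooth projective toric variety, regularity means $\mathrm{Sing}(\mathcal{F})=\varnothing$, so the total number of singularities counted with multiplicity is zero; plugging this into the formula of Proposition \ref{pro1} gives $\int_{\mathbb{F}(a)}^{orb} c_n\big(\mathcal{T}\mathbb{F}(a)\otimes\mathcal{O}(d)\big)=0$ for the degree $d=(d_1,\dots,d_{n+2})$ of $\mathcal{F}$. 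The first concrete task is to compute the Chow ring of $\mathbb{F}(a)$. Using the $\mathbb{C}^*\times\mathbb{C}^*$-quotient description from Example \ref{exe4}, with $\mathcal{A}_{n-1}(\mathbb{F}(a))\simeq\mathbb{Z}^2$ generated by classes $f$ (the fiber class, $=\deg z_{2,i}$-side) and $h$ (the class with $\deg z_{1,1}=\deg z_{1,2}=(1,0)$), one has the Stanley–Reisner relations coming from $\mathcal{Z}=(\{0\}\times\mathbb{C}^n)\cup(\mathbb{C}^2\times\mathbb{C}^{?})$: the two generators $z_{1,1},z_{1,2}$ cannot vanish simultaneously, and the $z_{2,i}$ cannot vanish simultaneously. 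This yields $A^\bullet(\mathbb{F}(a))\cong \mathbb{Z}[h,f]/(f^n,\ h^2 - (\text{linear in }h\text{ with coefficients }a_i))$, and in particular $h\cdot f^{n-1}=1$ is the point class while $f^n=0$.

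Next I would write down $h_i=[D_i]$ in terms of $h$ and $f$: the two divisors from $z_{1,1},z_{1,2}$ have class $h$ (or $h$ up to the $a_i$-twist — I'd use the Euler sequence $0\to\mathcal{O}^{\oplus2}\to\mathcal{O}(1,0)^{\oplus2}\oplus\bigoplus_i\mathcal{O}(-a_i,1)\to\mathcal{T}\mathbb{F}(a)\to0$ from the excerpt directly), and the $n$ divisors from $z_{2,i}$ have class $f - a_i h$ or its analogue. Then, exactly as in the proof of Proposition \ref{pro1}, the total Chern class of $\mathcal{T}\mathbb{F}(a)$ is $\prod(1+h)^2\prod_i(1+f-a_ih)$ divided by $1$ (the trivial $\mathcal{O}^{\oplus2}$ contributes nothing), and $c_n(\mathcal{T}\mathbb{F}(a)\otimes\mathcal{O}(d))=\sum_{j}\texttt{C}_j(h)\,\delta^{\,n-j}$ where $\delta=\sum_i d_i h_i\in\{ \alpha h+\beta f\}$ is the (numerical) degree of $\mathcal{F}$. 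So the vanishing condition becomes a polynomial identity in $\mathbb{Z}[h,f]/(f^n,\,h^2=\dots)$: a specific quadratic-type Diophantine equation in the two integers $\alpha,\beta$ determined by $d$, with coefficients built from $a_1,\dots,a_n$.

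The heart of the argument — and the step I expect to be the main obstacle — is showing this Diophantine equation has no solution when $n>2$. I would reduce modulo $f$ (i.e. restrict to the fiber $\mathbb{P}^1$, or equivalently look at the coefficient structure in the $h$-direction): the top self-intersection $h\cdot f^{n-1}$ survives, and since $f^n=0$ while $h^2$ is expressible via lower terms, the "$h$-heavy" part of $c_n$ forces a constraint of the form "something positive $=0$" — concretely, a sum like $\alpha^2 - (\text{positive combination of }a_i)\,(\dots) + (\text{linear in }\alpha,\beta) + \chi_{orb}(\mathbb{F}(a))$ with $\chi_{orb}(\mathbb{F}(a))=2n>0$. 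The cleanest route is probably to exhibit the equation as $(\alpha + c_1)^2 \cdot(\text{stuff}) + \dots$ and use that the $n=2$ case is genuinely realizable (Hirzebruch surfaces, Brunella) while for $n>2$ the extra factor of $f$ needed to reach the point class kills the would-be cancellation; I would verify this by a careful bookkeeping of which monomials $h^af^b$ with $a+b=n$, $a\le1$ are nonzero. An alternative, and perhaps more conceptual, backup is to invoke the Euler-characteristic obstruction: a regular one-dimensional foliation gives a nowhere-zero section of $\mathcal{T}\mathbb{F}(a)\otimes\mathcal{O}(d)$, hence $c_n$ of that bundle vanishes, but combining with the sub-line-bundle $\mathcal{O}(d)\hookrightarrow\mathcal{T}\mathbb{F}(a)\otimes\mathcal{O}(d)$ one gets a regular foliation in the usual sense, whose tangent bundle $T_{\mathcal{F}}=\mathcal{O}(-d)$ must then satisfy Bott-vanishing/Baum–Bott constraints incompatible with the projective-bundle structure $\mathbb{F}(a)=\mathbb{P}(\mathcal{O}\oplus\bigoplus\mathcal{O}(a_i))\to\mathbb{P}^1$ having relative dimension $n-1>1$; in that picture a regular one-dimensional foliation would have to be everywhere transverse or everywhere tangent to the fibers, and a one-dimensional everywhere-transverse foliation on an $(n-1)$-dimensional fiber bundle with $n-1\ge2$ is obstructed, while tangency forces it inside each $\mathbb{P}^{n-1}$ fiber where regular one-dimensional foliations do not exist. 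I would present the numerical computation as the main proof and remark on this geometric interpretation.
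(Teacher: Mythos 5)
There is a genuine gap, and it sits exactly where you flag ``the main obstacle.'' Your primary plan is to show that the Diophantine condition $\int_{\mathbb{F}(a)} c_n\bigl(\mathcal{T}\mathbb{F}(a)\otimes\mathcal{O}(d)\bigr)=0$ has no integer solutions for $n>2$, but it does have solutions for every $n$: the paper expands this Chern number (using the computation of \cite[p.22]{Mig2}) and finds that the vanishing forces precisely $(d_1,d_2)=(-2,0)$ or $d_2=-2$ with $d_1=-\tfrac{1}{n}\bigl(1+(-1)^{n+1}-2\left|a\right|\bigr)$. The first solution is the degree of the fibration-type foliation, realized on Hirzebruch surfaces ($n=2$), and the same numerics persist verbatim for $n>2$; the terms involving $d$ are not sign-definite (the $d_i$ may be negative), so no bookkeeping of the Chow ring of $\mathbb{F}(a)$ and no appeal to $\chi(\mathbb{F}(a))=2n>0$ can turn the Chern-number vanishing alone into a contradiction. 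The hypothesis $n>2$ never enters at the numerical level.

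The missing idea is the structural second half of the argument. A foliation of degree $(d_1,d_2)$ is induced by a quasi-homogeneous vector field $X=\sum P_{i,j}\,\partial/\partial z_{i,j}$ with $\deg(P_{1,1})=\deg(P_{1,2})=(d_1+1,d_2)$ and $\deg(P_{2,j})=(d_1,d_2+1)$; since the second coordinate of these bidegrees must be non-negative, the solution with $d_2=-2$ is excluded, and for $(d_1,d_2)=(-2,0)$ the components $P_{1,1},P_{1,2}$ have degree $(-1,0)$ and hence vanish. Thus $X$ is tangent to the fibers $\cong\mathbb{P}^{n-1}$ of $\mathbb{F}(a)\to\mathbb{P}^1$, and regularity would produce a regular one-dimensional foliation on $\mathbb{P}^{n-1}$ with $n-1\geq 2$, which is impossible; this is where $n>2$ is used (for $n=2$ the fiber is $\mathbb{P}^1$ and no contradiction arises, consistently with Brunella). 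Your ``backup'' geometric argument reaches the right endpoint but assumes an unjustified dichotomy -- that a regular foliation must be everywhere tangent or everywhere transverse to the fibers -- which is false for general foliations and is exactly what the degree analysis above is needed to establish; the accompanying claim that an everywhere-transverse one-dimensional foliation is ``obstructed'' is also left unsubstantiated and is not used by the paper.
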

\begin{proof}
Here $\mathrm{Pic}\big(\mathbb{F}(a)\big)=\mathbb{Z}L\oplus\mathbb{Z}M$, 
subject to the relations $L^2=0$, $M^n=\left|a\right|=\sum_{i=1}^n a_i$ and $M^{n-1}\cdot L=1$. Moreover
$h_1=h_2=L$ and $h_i'=h_{i+2}=-a_iL+M$ for all $i=1,\ldots,n$; see \cite{Re}. \\

Let $\mathcal{F}$ be a one-dimensional regular foliation of degree $d=\sum_{i}d_{i} h_i$ in $\mathbb{F}(a)$.
Then we can write
\begin{eqnarray*}
\#\,\mathrm{Sing}(\mathcal{F})&=&\sum_{j=0}^{n}\texttt{C}_j(h)(d_1h_1+d_2h_2)^{\,n-j} \\
&=&(-1)^n\sum_{j=0}^{n}(-1)^j\big((-d_1)h_1+(-d_2)h_2\big)^{\,n-j}\texttt{C}_j(h)=0.	
\end{eqnarray*}
Then, it follows from \cite[p.22]{Mig2} that 
\begin{eqnarray*}
0&=&-nd_1\left(-d_2-1\right)^{n-1}-2P(-d_2)+2(-1)^n -\left|a\right| d_2 \left(-d_2-1\right)^{n-1} \\
&=& (-1)^n\left(nd_1+\left|a\right|d_2\right)\left(d_2+1\right)^{n-1}-2P(-d_2)+2(-1)^n,
\end{eqnarray*}
where $P(t)=t\sum_{i=0}^{n-2}(-1)^{i}\binom{n}{i}t^{\,n-2-i}+(-1)^n(1-n)$.
In addition, the equation above implies two possibilities; $d_1=-2$ and $d_2=0$, or 
$d_1=-\frac{1}{n}(1+(-1)^{n+1}-2\left|a\right|)$ and $d_2=-2$. \\

Since $\mathcal{F}$ is induced by a polynomial vector field $X=\sum P_{i,j}\frac{\partial}{\partial z_{i,j}}$, where 
$\deg(P_{1,1})=\deg(P_{1,2})=(d_1+1,d_2)$ and $\deg(P_{2,j})=(d_1,d_2+1)$, with non-negative second coordinate, we
have $d_1=-2$ and $d_2=0$. Note that $\deg(P_{1,1})=\deg(P_{1,2})=(-1,0)$ implies $P_{1,1}=P_{1,2}=0$.
Therefore we can write 
$$X=\sum_{j=1}^n P_{2,j}\frac{\partial}{\partial z_{2,j}},$$
with $\deg(P_{2,j})=(-2,1)$. The polynomials $P_{2,j}$ are of the form $P_{2,j}=\sum_{k=1}^n Q_{j,k}z_{2,k}$, where $Q_{j,k}$ 
are homogeneous polynomials of degree $a_k-2$, in the variables $z_{1,1}$ and $z_{1,2}$.
In particular $X$ is tangent to the fibers ($\cong\mathbb{P}^{n-1}$) of $\mathbb{F}(a)$, which is a contradiction
because projective spaces do not admit one-dimensional regular foliations. 
\end{proof}

On quasi-smooth hypersurfaces invariant by one-dimensional foliations, we have the following proposition:

\begin{prop} \label{pro2} Let $\mathbb{P}_{\Delta}$ be an $n$-dimensional compact toric orbifold with isolated singularities.
Suppose $\mathcal{F}$ is a one-dimensional foliation of degree $d=\sum_{i}d_{i} h_i$
on $\mathbb{P}_{\Delta}$. Let $V \subset \mathbb{P}_{\Delta}$ be a quasi-smooth hypersurface invariant by $\mathcal{F}$ 
of degree $a=\sum_{i}a_{i} h_i$, such that $\mathcal{F}|_V$ only has isolated singularities. 
Then, the singular scheme of $\mathcal{F}|_V$ consists of
$$\#\,\mathrm{Sing}(\mathcal{F}|_V)
=\sum_{j=0}^{n-1} \sum_{k=0}^{j}(-1)^{k}\int_{\mathbb{P}_{\Delta}}^{\,orb}\texttt{C}_{j-k}(h)\,a^{k+1}\,d^{\,n-1-j},$$
points counted with multiplicity. In particular, the orbifold Euler characteristic of $V$ is 
$$\chi_{orb}(V)=\int_V^{\,orb} \texttt{C}_{n-1}(V)
=\sum_{k=0}^{n-1}(-1)^{k}\int_{\mathbb{P}_{\Delta}}^{\,orb}\texttt{C}_{n-1-k}(h)\,a^{k+1}.$$
\end{prop}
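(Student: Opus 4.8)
The plan is to reduce the computation of $\#\,\mathrm{Sing}(\mathcal{F}|_V)$ to a Bott-type residue formula on $V$ itself, and then express the relevant Chern class of $V$ in terms of the Chern classes of $\mathbb{P}_\Delta$ and the normal bundle of $V$. Since $\mathcal{F}$ leaves $V$ invariant and $V$ is quasi-smooth (hence a suborbifold of $\mathbb{P}_\Delta$), the section $X$ of $\mathcal{T}\mathbb{P}_\Delta(d)$ restricts to a section of $\mathcal{T}V(d|_V)$; here $\mathcal{T}V(d|_V) = \mathcal{T}V\otimes\mathcal{O}_V(d)|_V$. By \cite[Theorem 1]{Mig} applied to the compact toric orbifold structure on $V$ (which has isolated singularities since $\mathbb{P}_\Delta$ does, and $V$ is quasi-smooth), we get
\begin{eqnarray*}
\#\,\mathrm{Sing}(\mathcal{F}|_V) = \int_V^{\,orb} c_{n-1}\big(\mathcal{T}V\otimes\mathcal{O}_V(d)\big).
\end{eqnarray*}
The first main step is therefore to justify this restriction and the hypothesis that $\mathcal{F}|_V$ has isolated singularities makes the right-hand side meaningful.

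Next I would use the conormal exact sequence $0\to\mathcal{O}_V(-a)\to\Omega^1_{\mathbb{P}_\Delta}|_V\to\Omega^1_V\to 0$ (equivalently $0\to\mathcal{T}V\to\mathcal{T}\mathbb{P}_\Delta|_V\to\mathcal{O}_V(a)\to 0$), valid for a quasi-smooth complete intersection, to write $c(\mathcal{T}V) = c(\mathcal{T}\mathbb{P}_\Delta|_V)/(1+a)$ in the Chow ring of $V$, where I abbreviate $a$ for $c_1(\mathcal{O}_V(a))$ and likewise $h_i$, $d$ for the restrictions of the corresponding classes on $\mathbb{P}_\Delta$. Combining with the generalized Euler sequence restricted to $V$, which gives $c(\mathcal{T}\mathbb{P}_\Delta|_V) = \prod_i(1+h_i)$, I obtain
\begin{eqnarray*}
c(\mathcal{T}V) = \frac{\prod_i(1+h_i)}{1+a} = \Big(\sum_{j\ge 0}\texttt{C}_j(h)\Big)\Big(\sum_{k\ge 0}(-1)^k a^k\Big),
\end{eqnarray*}
so $c_\ell(\mathcal{T}V) = \sum_{k=0}^{\ell}(-1)^k\texttt{C}_{\ell-k}(h)\,a^k$. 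Then the splitting principle / twist formula gives $c_{n-1}(\mathcal{T}V\otimes\mathcal{O}_V(d)) = \sum_{j=0}^{n-1}c_j(\mathcal{T}V)\,d^{\,n-1-j} = \sum_{j=0}^{n-1}\sum_{k=0}^{j}(-1)^k\texttt{C}_{j-k}(h)\,a^k\,d^{\,n-1-j}$.

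Now I would push this forward to $\mathbb{P}_\Delta$: since $V$ is the zero scheme of a section giving the class $a$, for any class $\beta$ on $\mathbb{P}_\Delta$ we have $\int_V^{\,orb}\beta|_V = \int_{\mathbb{P}_\Delta}^{\,orb}\beta\cdot a$ (projection formula for the orbifold fundamental class, using that $[V] = a\cap[\mathbb{P}_\Delta]$). Applying this with $\beta = \sum_{j=0}^{n-1}\sum_{k=0}^{j}(-1)^k\texttt{C}_{j-k}(h)\,a^k\,d^{\,n-1-j}$ yields exactly
\begin{eqnarray*}
\#\,\mathrm{Sing}(\mathcal{F}|_V) = \sum_{j=0}^{n-1}\sum_{k=0}^{j}(-1)^k\int_{\mathbb{P}_\Delta}^{\,orb}\texttt{C}_{j-k}(h)\,a^{k+1}\,d^{\,n-1-j},
\end{eqnarray*}
which is the claimed formula. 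The special case of $\chi_{orb}(V)$ follows by taking $\mathcal{F}$ generic of degree $d=0$ (so $\#\,\mathrm{Sing}(\mathcal{F}|_V) = \chi_{orb}(V) = \int_V^{\,orb} c_{n-1}(\mathcal{T}V)$), keeping only the $j=n-1$ term, and reindexing $k\mapsto k$ to get $\sum_{k=0}^{n-1}(-1)^k\int_{\mathbb{P}_\Delta}^{\,orb}\texttt{C}_{n-1-k}(h)\,a^{k+1}$.

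The main obstacle I anticipate is justifying the manipulations in the orbifold setting rather than the smooth one: one must know that $V$, being a quasi-smooth complete intersection, is itself a compact toric orbifold with at most isolated singularities to which \cite[Theorem 1]{Mig} applies; that the conormal sequence and the restricted Euler sequence hold as sequences of orbibundles (or sheaves whose Chern classes behave correctly under the orbifold integral); and that the projection formula $\int_V^{\,orb}\beta|_V = \int_{\mathbb{P}_\Delta}^{\,orb}\beta\cdot a$ is valid for the orbifold integrals defined via local smoothing covers, including the correct bookkeeping of the orders $\#G_p$ at the isolated singular points. Once these structural points are in place, the remaining computation is the purely formal Chern-class bookkeeping sketched above.
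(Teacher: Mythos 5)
Your proposal is correct and follows essentially the same route as the paper: \cite[Theorem 1]{Mig} applied on $V$, the normal-bundle sequence $0\to\mathcal{T}V\to\mathcal{T}\mathbb{P}_{\Delta}|_V\to\mathcal{N}_V\to0$ together with the generalized Euler sequence to obtain $c_j(V)=\sum_{k=0}^{j}(-1)^{k}\texttt{C}_{j-k}(h)\,a^{k}$, the twist formula for $c_{n-1}(\mathcal{T}V\otimes\mathcal{O}(d)|_V)$, and Poincar\'e duality to convert $\int_V^{\,orb}$ into $\int_{\mathbb{P}_{\Delta}}^{\,orb}$ with the extra factor $a$ (the paper phrases the Chern-class step as the recursion $c_j(V)=c_j(\mathbb{P}_{\Delta})-c_{j-1}(V)c_1(\mathcal{N}_V)$, which is equivalent to your division of total Chern classes). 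The only small caveat is the ``in particular'' part: you should not deduce it from a generic foliation of degree $d=0$ (whose existence with the required invariance and isolated singularities on $V$ is not guaranteed), but rather directly from Satake's orbifold Gauss--Bonnet identity $\chi_{orb}(V)=\int_V^{\,orb}c_{n-1}(\mathcal{T}V)$ combined with your $d=0$ Chern-class computation and the same duality step, all of which are already in your argument.
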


\begin{proof} It follows from theorem \cite[Theorem 1]{Mig} that
$$\#\,\mathrm{Sing}(\mathcal{F}|_V)=\int_{V}^{\,orb} c_{n-1}(\mathcal{T}V\otimes\mathcal{O}_{\mathbb{P}_{\Delta}}(d)|_{V}).$$
In order to calculate this integral, consider the following exact sequence
$$0 \longrightarrow \mathcal{T}V \longrightarrow \mathcal{T}\mathbb{P}_{\Delta}|_{V} \longrightarrow \mathcal{N}_{V} \longrightarrow 0\, ,$$
where $\mathcal{N}_V$ is the normal orbifold bundle. Then
$$c(\mathbb{P}_{\Delta})=c(V)c(\mathcal{N}_V)$$
and
\begin{eqnarray}\label{eq:1.10}
c_{j}(V)=c_{j}(\mathbb{P}_{\Delta})-c_{j-1}(V)c_{1}(\mathcal{N}_V),\,\,\, 1\leq j \leq n-1.
\end{eqnarray}

\vspace{0,1cm}

\noindent Moreover, by using the Euler formula we get 
$c_{j}(\mathbb{P}_{\Delta})=\texttt{C}_{j}(h)$, $1\leq j \leq n-1$.
On the other hand, since $\mathcal{N}_{V}=\mathcal{O}_{\mathbb{P}_{\Delta}}(V)|_{V}$, we have that
$c_{1}(\mathcal{N}_V)=a$. Then, replacing in (\ref{eq:1.10}), we obtain
$$c_{j}(V)=\sum_{k=0}^{j}(-1)^{k}\texttt{C}_{j-k}(h)\,a^k,\,\,\, 0 \leq j \leq n-1.$$
Therefore
\begin{align}
c_{n-1}(\mathcal{T}V\otimes\mathcal{O}_{\mathbb{P}_{\Delta}}(d)|_{V})
&=\sum_{j=0}^{n-1}c_{j}(V)c_{1}(\mathcal{O}_{\mathbb{P}_{\Delta}}(d))^{n-1-j}\nonumber\\
&=\sum_{j=0}^{n-1} \left\{\sum_{k=0}^{j}(-1)^{k}\texttt{C}_{j-k}(h)\,a^k\right\}\,d^{\,n-1-j},\nonumber
\end{align}
and so we get  
$$\#\,\mathrm{Sing}(\mathcal{F}|_V)
=\sum_{j=0}^{n-1} \sum_{k=0}^{j}(-1)^{k}\int_{V}^{\,orb}\texttt{C}_{j-k}(h)\,a^k\,d^{\,n-1-j}.$$
After the work of I. Satake (see \cite{BFK} and \cite{Sata}), Poincar\'e's duality holds, so that
$V$ can be seen as the Poincar\'e dual of $c_1 ([V])$. Then we obtain
$$\#\,\mathrm{Sing}(\mathcal{F}|_V)
=\sum_{j=0}^{n-1} \sum_{k=0}^{j}(-1)^{k}\int_{\mathbb{P}_{\Delta}}^{\,orb}\texttt{C}_{j-k}(h)\,a^{k+1}\,d^{\,n-1-j}.$$
\end{proof}

\vspace{0,1cm}

In \cite{MaDi} the authors provide a Poincar\'e-Hopf type theorem for 
non-compact complex manifold of the form $X\setminus D$, where $X$ is a complex compact
manifold and $D$ is a normal crossing divisor on $X$, see also \cite{Alu, Ii, NoKo, SiR}.
In particular, the authors provide a formula for the Euler characteristic $\chi(X\setminus D)$, defined by
$$\chi(X\setminus D)=\sum_{i=1}^n\dim_{\mathbb{C}} H^i_c(X\setminus D,\mathbb{C}).$$
The following result is a generalization of that obtained in \cite{MaDi}, specifically in the context of the complex projective space.

\begin{teo} \label{aa} Let $\mathbb{P}_{\Delta}$ be an $n$-dimensional compact toric orbifold with isolated singularities.
Suppose $\mathcal{F}$ is a one-dimensional foliation of degree $d=\sum_{i}d_{i} h_i$ on $\mathbb{P}_{\Delta}$
with isolated and non-degenerate singularities. 
Let $V \subset \mathbb{P}_{\Delta}$ be a quasi-smooth hypersurface invariant by $\mathcal{F}$ of degree $a=\sum_{i}a_{i} h_i$. Then,
the singular scheme of $\mathcal{F}|_{\mathbb{P}_{\Delta}\setminus V}$ consists of
$$\#\,\mathrm{Sing}(\mathcal{F}_{\mathbb{P}_{\Delta}\setminus V})
=\sum_{j=0}^{n}\sum_{i=0}^{n-j}(-1)^{i}\int_{\mathbb{P}_{\Delta}}^{\,orb}\texttt{C}_{n-j-i}(h)\,a^{i}\,d^{\,j},$$
points counted with multiplicity. In particular, if 
$V \subset \mathbb{P}_{\Delta}$ is a smooth hypersurface, we obtain 
$$\chi(\mathbb{P}_{\Delta}\setminus V)=\sum_{i=0}^n(-1)^{i}\int_{\mathbb{P}_{\Delta}}\texttt{C}_{n-i}(h)\,a^{i}.$$
\end{teo}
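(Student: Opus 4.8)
The plan is to compute the Euler--Poincar\'e contribution of $\mathcal{F}$ on the open orbifold $\mathbb{P}_{\Delta}\setminus V$ by combining the counting formula of Proposition \ref{pro1} with the one on the invariant hypersurface from Proposition \ref{pro2}. The underlying geometric principle, going back to \cite{MaDi}, is that when $\mathcal{F}$ has isolated non-degenerate singularities and $V$ is invariant, every singular point of $\mathcal{F}$ lies either on $V$ or on its complement, so $\#\,\mathrm{Sing}(\mathcal{F}_{\mathbb{P}_{\Delta}\setminus V})=\#\,\mathrm{Sing}(\mathcal{F})-\#\,\mathrm{Sing}(\mathcal{F}|_V)$ as a count with multiplicity. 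The non-degeneracy hypothesis is what guarantees that the multiplicity of a singular point $p\in V$ computed intrinsically on $V$ (via $\mathcal{I}^{orb}_p(\mathcal{F}|_V)$) agrees with its contribution to the ambient count minus a factor accounting for the transverse direction normal to $V$; more precisely, at a non-degenerate zero the index on the complement is $+1$, so the count is literally additive.

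First I would invoke Proposition \ref{pro1} to write $\#\,\mathrm{Sing}(\mathcal{F})=\int_{\mathbb{P}_{\Delta}}^{orb}c_n(\mathcal{T}\mathbb{P}_{\Delta}\otimes\mathcal{O}_{\mathbb{P}_{\Delta}}(d))=\sum_{j=0}^n\texttt{C}_j(h)\,d^{\,n-j}$ in the abbreviated notation $d=\sum_i d_ih_i$. Then I would invoke Proposition \ref{pro2}, rewriting its formula using Poincar\'e--Satake duality as $\#\,\mathrm{Sing}(\mathcal{F}|_V)=\sum_{j=0}^{n-1}\sum_{k=0}^{j}(-1)^k\int_{\mathbb{P}_{\Delta}}^{orb}\texttt{C}_{j-k}(h)\,a^{k+1}\,d^{\,n-1-j}$. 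Subtracting, the claim to verify is the combinatorial identity
\begin{eqnarray*}
\sum_{j=0}^{n}\texttt{C}_j(h)\,d^{\,n-j}-\sum_{j=0}^{n-1}\sum_{k=0}^{j}(-1)^k\texttt{C}_{j-k}(h)\,a^{k+1}\,d^{\,n-1-j}
=\sum_{j=0}^{n}\sum_{i=0}^{n-j}(-1)^{i}\texttt{C}_{n-j-i}(h)\,a^{i}\,d^{\,j},
\end{eqnarray*}
which I would prove by reindexing the right-hand side as $\sum_{m=0}^n\big(\sum_{i=0}^{m}(-1)^i\texttt{C}_{m-i}(h)a^i\big)d^{\,n-m}$ and recognizing the inner sum as $c_m(V)$ by the computation $c_j(V)=\sum_{k=0}^j(-1)^k\texttt{C}_{j-k}(h)a^k$ already established in the proof of Proposition \ref{pro2}. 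So the identity reduces to $\sum_{j=0}^n c_j(\mathbb{P}_{\Delta})d^{\,n-j}-\sum_{j=0}^{n-1}c_j(V)\,a\cdot d^{\,n-1-j}=\sum_{m=0}^n c_m(V)\,d^{\,n-m}$, which is exactly the relation $c(\mathbb{P}_{\Delta})=c(V)c(\mathcal{N}_V)=c(V)(1+a)$ read off in degree $\leq n$ after multiplying through by the appropriate powers of $d$ — a telescoping consequence of $c_j(\mathbb{P}_{\Delta})=c_j(V)+c_{j-1}(V)a$. For the final assertion about $\chi(\mathbb{P}_{\Delta}\setminus V)$ when $V$ is smooth, I would set $d=0$ (a foliation that degenerates to the zero section contributes only the top Chern number $\chi_{orb}$ on each stratum) so that only the $j=0$ term survives on each side, giving $\chi(\mathbb{P}_{\Delta}\setminus V)=\chi_{orb}(\mathbb{P}_{\Delta})-\chi_{orb}(V)=\sum_{i=0}^n(-1)^i\int_{\mathbb{P}_{\Delta}}\texttt{C}_{n-i}(h)a^i$, using the additivity of the topological Euler characteristic with compact supports for the decomposition $\mathbb{P}_{\Delta}=(\mathbb{P}_{\Delta}\setminus V)\sqcup V$ and the fact that for a smooth $V$ the orbifold and topological Euler characteristics coincide with $\mathrm{Sing}(\mathbb{P}_{\Delta})\cap V=\varnothing$ after a genericity reduction.

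The main obstacle I anticipate is not the algebra — that is a clean Chern-class telescoping — but the geometric justification that the singular count is genuinely \emph{additive} across $V$ and its complement, i.e. that no singular points of $\mathcal{F}$ are "hidden" in the divisor with the wrong multiplicity. This is where the non-degeneracy hypothesis is essential: at a non-degenerate singularity $p$ one has a well-defined splitting of the tangent space adapted to $V$ (since $V$ is $\mathcal{F}$-invariant and quasi-smooth, hence a suborbifold), and the Grothendieck residue defining $\mathcal{I}^{orb}_p(\mathcal{F})$ factors as the product of the residue of $\mathcal{F}|_V$ at $p$ and the residue in the normal direction, the latter equal to $1$ by non-degeneracy. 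I would make this precise by choosing local orbifold coordinates $(\tilde z_1,\dots,\tilde z_n)$ at $\tilde p$ with $V=\{\tilde z_n=0\}$, writing $\tilde X=\sum_{i<n}\tilde X_i\partial_{\tilde z_i}+\tilde z_n \tilde g\,\partial_{\tilde z_n}$ (the shape forced by invariance), and observing $\mathrm{Det}(J\tilde X)|_{\tilde p}=\tilde g(\tilde p)\cdot\mathrm{Det}(J(\tilde X|_V))|_{\tilde p}$ together with the product structure of the residue; summing over $p\notin V$ then yields the stated count, and the $V$-part is absorbed into Proposition \ref{pro2}. The smooth-$V$ corollary additionally needs the identification of the algebraic count with $\chi_c$, which follows from the excision/additivity of compactly-supported Euler characteristics together with Proposition \ref{pro1} applied on each stratum.
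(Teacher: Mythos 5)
Your proposal is correct and follows essentially the same route as the paper: the non-degeneracy hypothesis gives the additivity $\#\,\mathrm{Sing}(\mathcal{F}_{\mathbb{P}_{\Delta}\setminus V})=\#\,\mathrm{Sing}(\mathcal{F})-\#\,\mathrm{Sing}(\mathcal{F}|_{V})$, and the formula then follows by subtracting Proposition \ref{pro2} from Proposition \ref{pro1}; your verification of the resulting identity via the telescoping relation $\texttt{C}_j(h)=c_j(V)+a\,c_{j-1}(V)$ is just a cleaner packaging of the paper's reindexing, and your treatment of the Euler-characteristic statement via additivity of $\chi_c$ matches the intended argument.
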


\begin{proof} By hypothesis, the singularities of $\mathcal{F}$ are non-degenerates. Then, the number 
$\#\left(\mathrm{Sing}(\mathcal{F})\cap V\right)$
corresponds to the sum of the numbers of Milnor $\#\,\mathrm{Sing}(\mathcal{F}|_V)$, and so 
$$\#\,\mathrm{Sing}(\mathcal{F}_{\mathbb{P}_{\Delta}\setminus V})=
\#\,\mathrm{Sing}(\mathcal{F}_{\mathbb{P}_{\Delta}})-
\#\,\mathrm{Sing}(\mathcal{F}|_{V}).$$

\noindent Then, by Propositions \ref{pro1} and \ref{pro2} we may write 

\begin{align*}
\#\,\mathrm{Sing}(\mathcal{F}_{\mathbb{P}_{\Delta}\setminus V})
&=\sum_{j=0}^{n}\int_{\mathbb{P}_{\Delta}}^{\,orb}\texttt{C}_j(h)d^{\,n-j}
-\sum_{j=0}^{n-1}\sum_{k=0}^{j}(-1)^{k}\int_{\mathbb{P}_{\Delta}}^{\,orb}\texttt{C}_{j-k}(h)\,a^{k+1}\,d^{\,n-1-j}\\
&=\sum_{j=0}^{n}\int_{\mathbb{P}_{\Delta}}^{\,orb}\texttt{C}_{n-j}(h)d^{\,j}
-\sum_{j=0}^{n-1}\sum_{k=0}^{n-1-j}(-1)^{k}\int_{\mathbb{P}_{\Delta}}^{\,orb}\texttt{C}_{n-1-j-k}(h)\,a^{k+1}\,d^{\,j}\\
&=\sum_{j=0}^{n}\int_{\mathbb{P}_{\Delta}}^{\,orb}\texttt{C}_{n-j}(h)d^{\,j}
+\sum_{j=0}^{n-1}\sum_{k=0}^{n-1-j}(-1)^{k+1}\int_{\mathbb{P}_{\Delta}}^{\,orb}\texttt{C}_{n-j-(k+1)}(h)\,a^{k+1}\,d^{\,j}\\
&=\sum_{j=0}^{n}(-1)^0\int_{\mathbb{P}_{\Delta}}^{\,orb}\texttt{C}_{n-j-0}(h)a^0d^{\,j}
+\sum_{j=0}^{n-1}\sum_{i=1}^{n-j}(-1)^{i}\int_{\mathbb{P}_{\Delta}}^{\,orb}\texttt{C}_{n-j-i}(h)\,a^{i}\,d^{\,j}\\
&=(-1)^0\int_{\mathbb{P}_{\Delta}}^{\,orb}\texttt{C}_{0}(h)a^0d^{\,n}
+\sum_{j=0}^{n-1}\sum_{i=0}^{n-j}(-1)^{i}\int_{\mathbb{P}_{\Delta}}^{\,orb}\texttt{C}_{n-j-i}(h)\,a^{i}\,d^{\,j}\\
&=\sum_{j=0}^{n}\sum_{i=0}^{n-j}(-1)^{i}\int_{\mathbb{P}_{\Delta}}^{\,orb}\texttt{C}_{n-j-i}(h)\,a^{i}\,d^{\,j}.
\end{align*}
\end{proof}

\begin{exe}
Let us consider an $2$-dimensional well formed weighted projective space 
$\mathbb{P}(\omega)=\mathbb{P}(\omega_0,\omega_1,\omega_2)$, with $\omega_0,\omega_1,\omega_2>1$. 
Choose $a_0, a_1, a_2\in \mathbb{C}^{\ast}$ such that  $a_{i}w_{j}\neq a_{j}w_{i}$ $\forall\,i\neq j$.
Here $h_i=w_ih$, where $h=c_{1}(\mathcal{O}_{\mathbb{P}(\omega)}(1))$.
Consider the one-dimensional foliation $\mathcal{F}$ of degree $d=0$ on $\mathbb{P}(\omega)$, induced in homogeneous coordinate by
$$X=\sum_{k=0}^{2}a_{k}z_{k}\frac{\partial}{\partial z_{k}}.$$

\noindent The local expression of $X$ over $U_i$ is
$$X|_{U_i}=\sum_{\stackrel{k=0}{k\neq i}}^2\Big(a_{k}-a_{i}\frac{w_{k}}{w_{i}}\Big)z_{k}\frac{\partial}{\partial z_{k}}.$$

\noindent Then $\mathrm{Sing}(X|_{U_i})=\{0\}$ and is non-degenerate, so

$$\mathrm{Sing}(\mathcal{F})=\left\{\left[1:0:0\right], \left[0:1:0\right],\left[0:0:1\right]\right\}=\mathrm{Sing}(\mathbb{P}(\omega)).$$

\noindent Note that $V=\left\{z_k=0\right\}\subset\mathbb{P}(\omega)$, $k\in\left\{0,1,2\right\}$, is a quasi-smooth hypersurface 
invariant by $\mathcal{F}$ of degree $a=w_k$. Therefore, we get
\begin{eqnarray*}
\#\,\mathrm{Sing}(\mathcal{F}_{\mathbb{P}(\omega)\setminus V})
&=&\frac{1}{\omega_k}=\frac{\texttt{C}_2(\omega)-\texttt{C}_1(\omega)\omega_k+\omega_k^2}{\omega_0\omega_1\omega_2} \\
&=&\sum_{i=0}^2(-1)^{i}\texttt{C}_{2-i}(\omega)\omega_k^{i}\int_{\mathbb{P}(\omega)}^{\,orb}h^2 \\
&=&\sum_{j=0}^2\sum_{i=0}^{2-j}(-1)^{i}\int_{\mathbb{P}(\omega)}^{\,orb}\texttt{C}_{2-j-i}(h)\,a^{i}\,d^{\,j}.
\end{eqnarray*}
\end{exe}

\section{Bott's residue formula on smooth weighted complete intersections}
 
Let $\mathbb{P}(\omega)$ be a well formed $n$-dimensional weighted projective space.
We say that $V\subset\mathbb{P}(\omega)$ is a smooth $N$-dimensional weighted complete intersection, 
when $V$ is the scheme-theoretic zero locus of $m=n-N$ weighted homogeneous 
polynomials $f_1,\ldots,f_m$ of degrees $a_1,\ldots a_m$.

\begin{teo} \label{teo1} 
Let $V=V(a_1,\ldots,a_m)\subset\mathbb{P}(\omega)$ be a smooth weighted complete intersection, and 
let $\mathcal{F}$ be a one-dimensional holomorphic foliation of degree $d$ on $\mathbb{P}(\omega)$.
Suppose that $\mathcal{F}$ leaves $V$ invariant and such that $\mathcal{F}|_V$ has only isolated singularities. 
Then 
$$\#\,\mathrm{Sing}(\mathcal{F}|_V)
=\frac{a_1\cdots a_m}{\omega_0\cdots\omega_n}\sum_{i=0}^{n-m}
\left\{\sum_{j=0}^i(-1)^j\texttt{C}_{\,i-j}(\omega)\cdot\mathcal{W}_j(a)\right\}d^{\,n-m-i}.$$
where $\texttt{C}_{k}(\omega)$ is the $k$th elementary symmetric function
of the variables $\omega_0,\ldots, \omega_{n}$, and $\mathcal{W}_k(a)$ is the Wronski (or complete symmetric function)
of degree $k$ in $m$ variables $a_1,\ldots,a_m$
$$\mathcal{W}_k(a)=\sum_{i_1+\cdots+i_m=k}a_1^{i_1}\ldots a_m^{i_m}.$$ 
\end{teo}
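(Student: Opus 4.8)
The plan is to mimic the proofs of Propositions \ref{pro1} and \ref{pro2}: first reduce $\#\,\mathrm{Sing}(\mathcal{F}|_V)$ to a single orbifold characteristic number via Bott's residue formula, then compute that number on $\mathbb{P}(\omega)$ using the Euler sequence together with the normal bundle sequence of $V$, and finally evaluate the resulting integral by orbifold Poincar\'e duality. More precisely, since $V=V(a_1,\dots,a_m)$ is a (quasi-)smooth complete intersection it is a compact suborbifold of dimension $N=n-m$; as $\mathcal{F}|_V$ has only isolated singularities, \cite[Theorem 1]{Mig} gives
$$\#\,\mathrm{Sing}(\mathcal{F}|_V)=\int_V^{\,orb}c_N\big(\mathcal{T}V\otimes\mathcal{O}_{\mathbb{P}(\omega)}(d)|_V\big),$$
and the whole task is to turn the right-hand side into the stated closed form.

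For the Chern classes of $\mathcal{T}V$, write $h=c_1(\mathcal{O}_{\mathbb{P}(\omega)}(1))$, so that $h_i=\omega_i h$ and $c_1(\mathcal{O}_{\mathbb{P}(\omega)}(d))=d\,h$. Because $V$ is a quasi-smooth complete intersection, its normal orbibundle splits as $\mathcal{N}_V=\bigoplus_{k=1}^m\mathcal{O}_{\mathbb{P}(\omega)}(a_k)|_V$, so the tangent bundle sequence $0\to\mathcal{T}V\to\mathcal{T}\mathbb{P}(\omega)|_V\to\mathcal{N}_V\to0$ yields
$$c(\mathcal{T}V)=\left.\frac{c(\mathcal{T}\mathbb{P}(\omega))}{\prod_{k=1}^m(1+a_k h)}\right|_V .$$
The Euler sequence of $\mathbb{P}(\omega)$ gives $c(\mathcal{T}\mathbb{P}(\omega))=\prod_{i=0}^n(1+\omega_i h)=\sum_{j\geq0}\texttt{C}_j(\omega)\,h^j$, while the generating-function identity for complete symmetric functions gives $\prod_{k=1}^m(1+a_k h)^{-1}=\sum_{l\geq0}(-1)^l\mathcal{W}_l(a)\,h^l$. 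Multiplying the two series and reading off the coefficient of $h^p$,
$$c_p(\mathcal{T}V)=\left(\sum_{j=0}^p(-1)^j\,\texttt{C}_{p-j}(\omega)\,\mathcal{W}_j(a)\right)h^p\big|_V,\qquad 0\leq p\leq N .$$

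Now expand the top Chern class of the twist, $c_N(\mathcal{T}V\otimes\mathcal{O}(d)|_V)=\sum_{p=0}^N c_p(\mathcal{T}V)\,(d\,h)^{N-p}|_V$, obtaining
$$c_N\big(\mathcal{T}V\otimes\mathcal{O}(d)|_V\big)=\left[\sum_{i=0}^{N}\left(\sum_{j=0}^i(-1)^j\,\texttt{C}_{i-j}(\omega)\,\mathcal{W}_j(a)\right)d^{\,N-i}\right]h^N\big|_V .$$
It remains to integrate $h^N|_V$ over $V$: by orbifold Poincar\'e duality (Satake, \cite{Sata, BFK}) the orbifold fundamental class of the complete intersection $V$ equals the dual of $c_1$ of its defining divisor, namely $a_1\cdots a_m\,h^m$, and combining this with the projection formula and the normalization $\int_{\mathbb{P}(\omega)}^{\,orb}h^n=\frac{1}{\omega_0\cdots\omega_n}$ gives $\int_V^{\,orb}h^N|_V=\frac{a_1\cdots a_m}{\omega_0\cdots\omega_n}$. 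Substituting and using $N=n-m$ produces exactly the asserted formula.

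The algebraic part — the complete-symmetric-function expansion and the convolution of the two power series — is routine; the delicate points are the orbifold-geometric inputs. One must use that a quasi-smooth weighted complete intersection really is a suborbifold of $\mathbb{P}(\omega)$ with normal orbibundle $\bigoplus_k\mathcal{O}(a_k)|_V$ even when $V$ meets $\mathrm{Sing}(\mathbb{P}(\omega))$, that the tangent bundle sequence, the Whitney product formula and the projection formula all remain valid in the orbifold cohomology of $\mathbb{P}(\omega)$, and that $\int_{\mathbb{P}(\omega)}^{\,orb}h^n=1/\prod_i\omega_i$ (the source of every denominator in the statement). These are the same ingredients used in Proposition \ref{pro2}, applied here to $m$ hypersurface sections simultaneously; note that one cannot instead induct on $m$ by iterating Proposition \ref{pro2}, since the intermediate partial intersections need not be $\mathcal{F}$-invariant.
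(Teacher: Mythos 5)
Your proposal is correct and follows essentially the same route as the paper: reduce to $\int_V^{orb}c_{n-m}(\mathcal{T}V(d))$ via \cite[Theorem 1]{Mig}, compute $c(V)$ from the Euler and normal bundle sequences as $\prod_i(1+\omega_i\eta)/\prod_k(1+a_k\eta)$, expand with elementary and complete symmetric functions, and evaluate by Poincar\'e duality using that $V$ is dual to $a_1\cdots a_m\,\eta^{m}$ and $\int^{orb}_{\mathbb{P}(\omega)}\eta^{n}=1/(\omega_0\cdots\omega_n)$. The only difference is presentational (your explicit remarks on the orbifold subtleties and on not inducting on $m$), not mathematical.
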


\begin{proof} It follows from theorem \cite[Theorem 1]{Mig} that
$$\#\,\mathrm{Sing}(\mathcal{F}|_V)=\int^{\,orb}_Vc_{n-m}\left(\mathcal{T}V(d)\right).$$

Similarly, as in the case of a smooth complete intersection in complex projective space,
the total Chern class of $V$ is
$$
c(V)=\frac{c\hspace{-0,5mm}\left(\mathcal{T}\mathbb{P}(\omega)|_V\right)}
{c\hspace{-0,5mm}\left(\mathcal{N}_{\mathbb{P}(\omega)/V}\right)}
=\frac{c\hspace{-0,5mm}\left(\bigoplus_{i=0}^n\mathcal{O}_{\mathbb{P}(\omega)}(\omega_i)|_V\right)}
{c\hspace{-0,5mm}\left(\bigoplus_{i=1}^m\mathcal{O}_{\mathbb{P}(\omega)}(a_i)|_V\right)},
$$

\vskip 0.2cm
\noindent see for instance \cite{WAWA}, see also \cite{Mig,EiHa,Mar}. Applying Whitney's formula, we get 

$$c(V)=\frac{\prod_{i=0}^n\left(1+\omega_i\,\eta\right)}{\prod_{i=0}^m\left(1+a_i\,\eta\right)},$$

\noindent where $\eta=c_1\hspace{-1mm}\left(i^{\ast}\mathcal{O}_{\mathbb{P}(\omega)}(1)\right)$, and $i:V\hookrightarrow\mathbb{P}(\omega)$ 
is the inclusion. Then 
\begin{eqnarray*}
c(V)&=&\left\{\sum_{j=0}^{\infty}(-1)^{j}\mathcal{W}_j(a)\,\eta^{\,j}\right\}
\left\{\sum_{k=0}^{n+1}\texttt{C}_{k}(\omega)\,\eta^k\right\} \\
&=& \sum_{i=0}^{n-m}\left\{\sum_{j+k=i}(-1)^{j}\mathcal{W}_j(a)\texttt{C}_{k}(\omega)\right\}\eta^{\,i} \\
&=& \sum_{i=0}^{n-m}\left\{\sum_{j=0}^{i}(-1)^{j}\mathcal{W}_j(a)\texttt{C}_{i-j}(\omega)\right\}\eta^{\,i},
\end{eqnarray*}
so that
$$c_i(V)=\left\{\sum_{j=0}^{i}(-1)^{j}\texttt{C}_{i-j}(\omega)\mathcal{W}_j(a)\right\}\eta^{\,i},\,\,\,\,\,\,0\leq i\leq n-m.$$
Hence,
\begin{eqnarray*}
c_{n-m}\hspace{-1mm}\left(\mathcal{T}V(d)\right)
&=&\sum_{i=0}^{n-m}c_i\hspace{-1mm}\left(\mathcal{T}V\right)
c_1\hspace{-1mm}\left(\mathcal{O}_{\mathbb{P}(\omega)}(d)|_V\right)^{n-m-i} \\
&=&\sum_{i=0}^{n-m}d^{\,n-m-i}c_i(V)\,\eta^{\,n-m-i} \\
&=&\sum_{i=0}^{n-m} \left\{\sum_{j=0}^i(-1)^j\texttt{C}_{\,i-j}(\omega)\mathcal{W}_j(a)\right\}d^{\,n-m-i}\,\eta^{\,n-m},
\end{eqnarray*}

\vskip 0.2cm
\noindent Finally, since $a_1\cdots a_m\,\eta^{\,m}$ is the Poincar\'e dual of $V$, we have

\begin{eqnarray*}
\#\,\mathrm{Sing}(\mathcal{F}|_V)&=&\int^{\,orb}_Vc_{n-m}\left(\mathcal{T}V(d)\right) \\
&=& a_1\cdots a_m\sum_{i=0}^{n-m}
\left\{\sum_{j=0}^i(-1)^j\texttt{C}_{\,i-j}(\omega)\mathcal{W}_j(a)\right\}d^{\,n-m-i}\int^{\,orb}_{\mathbb{P}(\omega)}
c_1\hspace{-1mm}\left(\mathcal{O}_{\mathbb{P}(\omega)}(1)\right)^n \\
&=& \frac{a_1\cdots a_m}{\omega_0\cdots\omega_n}\sum_{i=0}^{n-m}
\left\{\sum_{j=0}^i(-1)^j\texttt{C}_{\,i-j}(\omega)\mathcal{W}_j(a)\right\}d^{\,n-m-i}.
\end{eqnarray*}

\vskip 0.2cm
\noindent This finishes the proof of the theorem.
\end{proof}

Now, let us consider a Poincar\'e-type theorem for weighted complete intersections.
There exist works about Poincar\'e problem for complete intersection curves on projective spaces;
see for instance \cite{CCG,CJ,CE,Es,Mar}.

\begin{cor} \label{cccor}
Let $\mathcal{C}=V(a_1,\ldots,a_{n-1})\subset\mathbb{P}(\omega)$ be a smooth weighted complete intersection curve.
Suppose $\mathcal{F}$ is a one-dimensional holomorphic foliation of degree $d$ on $\mathbb{P}(\omega)$ 
which leaves $\mathcal{C}$ invariant. Then 
$$a_1+\cdots+a_{n-1}\leq d+\omega_0+\cdots+\omega_n.$$
\end{cor}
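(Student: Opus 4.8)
The plan is to extract the inequality from the non-negativity of the count $\#\,\mathrm{Sing}(\mathcal{F}|_{\mathcal{C}})$ provided by Theorem \ref{teo1}, applied with $N=1$, i.e. $m=n-1$. First I would write $\mathcal{C}=V(a_1,\ldots,a_{n-1})$ as a smooth weighted complete intersection curve and observe that, after a generic choice, the foliation $\mathcal{F}$ restricted to $\mathcal{C}$ has only isolated singularities (a finite set on a curve), so the hypotheses of Theorem \ref{teo1} are met; if $\mathcal{F}|_{\mathcal{C}}$ happens to be identically tangent the statement is vacuous or one perturbs within the family of degree-$d$ foliations leaving $\mathcal{C}$ invariant. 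Then with $n-m=1$ the formula collapses to a sum over $i\in\{0,1\}$:
\begin{eqnarray*}
\#\,\mathrm{Sing}(\mathcal{F}|_{\mathcal{C}})
&=&\frac{a_1\cdots a_{n-1}}{\omega_0\cdots\omega_n}\Big(\big[\texttt{C}_0(\omega)\mathcal{W}_0(a)\big]d
+\big[\texttt{C}_1(\omega)\mathcal{W}_0(a)-\texttt{C}_0(\omega)\mathcal{W}_1(a)\big]\Big)\\
&=&\frac{a_1\cdots a_{n-1}}{\omega_0\cdots\omega_n}\Big(d+(\omega_0+\cdots+\omega_n)-(a_1+\cdots+a_{n-1})\Big),
\end{eqnarray*}
using $\texttt{C}_0(\omega)=\mathcal{W}_0(a)=1$, $\texttt{C}_1(\omega)=\omega_0+\cdots+\omega_n$, and $\mathcal{W}_1(a)=a_1+\cdots+a_{n-1}$.

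Next I would argue that the left-hand side is a non-negative integer: it counts singular points of $\mathcal{F}|_{\mathcal{C}}$ with multiplicity, and each Grothendieck point residue / Milnor number is a non-negative integer (indeed the orbifold index $\mathcal{I}_p^{orb}$ inherits non-negativity after summing over the local smoothing cover, since $\mathcal{C}$ is a curve and the local multiplicities are lengths of Artinian quotients). Since the weights $\omega_i$ are positive and the degrees $a_i$ are positive integers, the prefactor $\tfrac{a_1\cdots a_{n-1}}{\omega_0\cdots\omega_n}$ is strictly positive; dividing through, non-negativity of $\#\,\mathrm{Sing}(\mathcal{F}|_{\mathcal{C}})$ forces
$$d+(\omega_0+\cdots+\omega_n)-(a_1+\cdots+a_{n-1})\ \geq\ 0,$$
which is exactly the claimed inequality $a_1+\cdots+a_{n-1}\leq d+\omega_0+\cdots+\omega_n$.

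The only real subtlety — and the step I expect to be the main obstacle — is justifying that the number produced by Theorem \ref{teo1} is genuinely non-negative rather than merely an integer that happens to equal an alternating sum of intersection numbers. On a smooth projective curve this is standard (the scheme $\mathrm{Sing}(\mathcal{F}|_{\mathcal{C}})$ is a finite subscheme and its length is its degree), but here $\mathcal{C}$ may pass through the orbifold locus of $\mathbb{P}(\omega)$, so I would need to note that $\mathcal{C}$ itself is quasi-smooth hence a suborbifold, that $\mathcal{F}|_{\mathcal{C}}$ is a section of an orbifold line bundle, and that its zero scheme has non-negative orbifold length — concretely, each $\mathcal{I}_p^{orb}(\mathcal{F}|_{\mathcal{C}})=\frac{1}{\#G_p}\mathrm{Res}_{\tilde p}\{\cdots\}$ is $\frac{1}{\#G_p}$ times the Milnor number of the lifted one-variable vector field, which is a positive integer. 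Once this positivity is in hand the corollary is immediate. I would also remark, as the paper's introduction promises, that specializing to $\omega=(1,\ldots,1)$ recovers the Poincaré-type bound of \cite{Mar}, and that equality $\#\,\mathrm{Sing}(\mathcal{F}|_{\mathcal{C}})=0$ corresponds precisely to $\mathcal{F}$ restricting to a regular (fixed-point-free) foliation on $\mathcal{C}$, i.e. $a_1+\cdots+a_{n-1}=d+\omega_0+\cdots+\omega_n$.
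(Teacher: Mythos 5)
Your proposal is correct and follows essentially the same route as the paper: apply Theorem \ref{teo1} with $m=n-1$ to get $\#\,\mathrm{Sing}(\mathcal{F}|_{\mathcal{C}})=\frac{a_1\cdots a_{n-1}}{\omega_0\cdots\omega_n}\big(d+\texttt{C}_1(\omega)-\mathcal{W}_1(a)\big)$ and conclude from $\#\,\mathrm{Sing}(\mathcal{F}|_{\mathcal{C}})\geq 0$. Your additional remarks on why the orbifold indices are non-negative and on the isolated-singularity hypothesis simply make explicit points the paper leaves implicit.
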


\begin{proof} By Theorem \ref{teo1}, we have 
\begin{eqnarray*}
\#\,\mathrm{Sing}(\mathcal{F}|_{\mathcal{C}})&=&\int^{\,orb}_{\mathcal{C}}c_1\left(\mathcal{T}\mathcal{C}(d)\right)\\
&=&\frac{a_1\cdots a_{n-1}}{\omega_0\cdots\omega_n}\sum_{i=0}^{1}
\left\{\sum_{j=0}^i(-1)^j\texttt{C}_{\,i-j}(\omega)\mathcal{W}_j(a)\right\}d^{\,1-i}\\
&=&\frac{a_1\cdots a_{n-1}}{\omega_0\cdots\omega_n}\big(d+\texttt{C}_1(\omega)-\mathcal{W}_1(a)\big).
\end{eqnarray*}
\\
\noindent Then, the theorem follows since $\#\,\mathrm{Sing}(\mathcal{F}|_{\mathcal{C}})\geq0$.
\end{proof}

More generally, we have 

\begin{teo} \label{teo3} 
Let $V=V(a_1,\ldots,a_m)\subset\mathbb{P}(\omega)$ be a smooth weighted complete intersection, 
and let $\mathcal{F}$ be a one-dimensional holomorphic foliation of degree $d$ on $V$. Then 
$$a_1+\cdots+a_m+\dim(V)-1\leq d+\omega_0+\cdots+\omega_n.$$
\end{teo}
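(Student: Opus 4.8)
The plan is to reduce the general statement to the curve case already treated in Corollary~\ref{cccor}. Since $\mathcal{F}$ is a one-dimensional holomorphic foliation of degree $d$ on the smooth weighted complete intersection $V = V(a_1,\ldots,a_m) \subset \mathbb{P}(\omega)$ with $N = \dim(V) = n-m$, I would first pass to a generic linear section. Choose generic weighted homogeneous polynomials $g_1,\ldots,g_{N-1}$ of degree $1$ (or, if $\mathbb{P}(\omega)$ has no elements of degree $1$, of a common suitable degree $e$; the genericity is what matters) so that the scheme $\mathcal{C} = V \cap V(g_1,\ldots,g_{N-1})$ is a smooth curve, and—crucially—so that $\mathcal{C}$ is invariant by $\mathcal{F}$. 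The existence of an invariant such curve is the point where care is needed: one takes the $g_i$ to cut out a subvariety through an appropriate arrangement so that the restricted foliation $\mathcal{F}|_V$ leaves $\mathcal{C}$ invariant. In the projective case this is the standard device (intersect with invariant hyperplanes through singular points, or use the fact that a generic member of a pencil is invariant when the foliation is tangent to the web of sections); here the analogue on $\mathbb{P}(\omega)$ should work verbatim once one notes that $V$ itself, being a complete intersection inside $\mathbb{P}(\omega)$, inherits enough sections.

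Granting that, $\mathcal{C} = V(a_1,\ldots,a_m,e,\ldots,e) \subset \mathbb{P}(\omega)$ (with $N-1$ copies of $e$) is a smooth weighted complete intersection curve left invariant by $\mathcal{F}$, which has degree $d$ as a foliation on $\mathbb{P}(\omega)$. Apply Corollary~\ref{cccor} to $\mathcal{C}$: it gives
$$
a_1 + \cdots + a_m + (N-1)e \le d + \omega_0 + \cdots + \omega_n.
$$
Taking $e = 1$ (the generic case, when degree-one elements exist and separate points enough to produce a smooth curve) yields exactly
$$
a_1 + \cdots + a_m + \dim(V) - 1 \le d + \omega_0 + \cdots + \omega_n,
$$
which is the claimed inequality. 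If the weights force $e > 1$, the resulting bound is even stronger, so the inequality as stated still holds; alternatively one can blow down or use a weighted-degree normalization to land on $e=1$.

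I expect the main obstacle to be the construction of the invariant smooth curve $\mathcal{C}$: one must simultaneously guarantee smoothness (a Bertini-type argument on the quasi-smooth locus of $\mathbb{P}(\omega)$, avoiding $\mathrm{Sing}(\mathbb{P}(\omega))$ and the singular locus of $\mathcal{F}$) and invariance under $\mathcal{F}$. The cleanest route is to note that the sections of $\mathcal{O}_{\mathbb{P}(\omega)}(e)$ vanishing on a union of $\mathcal{F}$-invariant divisors form a linear system, a generic member of whose base-point-free part is smooth by Bertini, and to intersect $V$ with $N-1$ such general invariant divisors; invariance of the intersection is then automatic, and genericity handles smoothness away from the orbifold locus. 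Everything after that is a one-line invocation of Corollary~\ref{cccor}, whose nonnegativity of $\#\,\mathrm{Sing}(\mathcal{F}|_{\mathcal{C}})$ supplies the inequality. A remark should be added that the case $\dim(V)=1$ is precisely Corollary~\ref{cccor}, and the case $\dim(V)=0$ is vacuous.
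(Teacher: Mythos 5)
There is a genuine gap, and it is exactly at the point you flag as needing care: the existence of an $\mathcal{F}$-invariant smooth complete intersection curve $\mathcal{C}\subset V$ cut out by hypersurface sections. For a \emph{one-dimensional} foliation, a curve is invariant precisely when it is a union of closures of algebraic leaves; such curves are rigid objects that in general do not exist at all (by Jouanolou-type genericity, a generic one-dimensional foliation admits no invariant algebraic curve, and likewise no invariant algebraic hypersurface). A generic member of a linear system is never invariant unless $\mathcal{F}$ is tangent to the fibres of the associated map, so no Bertini-type argument can simultaneously deliver invariance and smoothness; and your fallback of intersecting with ``$\mathcal{F}$-invariant divisors'' presupposes that invariant divisors exist and move in a base-point-free system, which fails in general. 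There is the further mismatch that Corollary \ref{cccor} concerns a foliation on the ambient $\mathbb{P}(\omega)$ leaving $\mathcal{C}$ invariant, whereas in Theorem \ref{teo3} the foliation lives on $V$ only; that part could be repaired by an intrinsic version of the degree count on an invariant curve, but the construction of the curve itself cannot, so the reduction collapses.

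The paper's proof takes a completely different, direct route that needs no invariant subvariety: writing $N=\dim V=n-m$, the perfect pairing $\Omega^1_V\times\Omega^{N-1}_V\to\Omega^N_V$ together with $K_V=\mathcal{O}_V\bigl(\mathcal{W}_1(a)-\texttt{C}_1(\omega)\bigr)$ gives
$$\mathcal{T}V(d)\;\simeq\;\Omega^{N-1}_V\bigl(d+\texttt{C}_1(\omega)-\mathcal{W}_1(a)\bigr),$$
and the foliation furnishes a nonzero global section of this sheaf. Flenner's Bott-type vanishing for weighted complete intersections, $H^0\bigl(V,\Omega^{N-1}_V(t)\bigr)=0$ for $t<N-1$, then forces $d+\texttt{C}_1(\omega)-\mathcal{W}_1(a)\geq N-1$, which is the stated inequality. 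If you want to salvage your strategy, you would have to replace the invariant-curve construction by an argument of this cohomological type; as written, the key step of your proposal cannot be carried out.
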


\begin{proof} 
The pairing $\Omega^1_{V}\times\Omega^{n-m-1}_{V}\rightarrow\Omega^{n-m}_V$ induces
$\Omega^1_{V}\simeq\mathrm{Hom}_{\mathbb{C}}(\Omega^{n-m-1}_{V},\Omega^{n-m}_V)$.
On the other hand, $K_{V}=\mathcal{O}_{V}(\mathcal{W}_1(a)-\texttt{C}_1(\omega))$ and hence 
$$\mathcal{T}V(d)=(\Omega^1_{V})^{\vee}(d)=\Omega^{n-m-1}_{V}
\otimes(\Omega^{n-m}_{V})^{\vee}(d)=\Omega^{n-m-1}_{V}(d+\texttt{C}_1(\omega)-\mathcal{W}_1(a)).$$

By Bott-type vanishing formulas for $V$, $H^{0}(V, \Omega_{V}^{n-m-1}(t))=0$ for $t<n-m-1$; 
see \cite[Satz 8.11]{Fle}. Then we have $d+\texttt{C}_1(\omega)-\mathcal{W}_1(a)\geq n-m-1$.
\end{proof}

\bigskip

As was done for the Poincar\'e-Hopf index, we define the Baum-Bott index of a foliation $\mathcal{F}$
on an orbifold surface $S$ at an isolated singularity $p$ by 
$$\mathrm{BB}_p^{orb}\left(X\right)=\frac{1}{\#G_p}\mathrm{Res}_{\,\tilde{p}}
\left\{\frac{\big(\mathrm{Tr}(J\tilde{X})\big)^2}{\tilde{X}_1\tilde{X}_2}d\tilde{z}_1\wedge d\tilde{z}_2\right\},$$
where $\pi_p:(\widetilde{U},\tilde{p})\rightarrow(U,p)$ denotes a local quotient map at $p$:
$\tilde{X}= \pi_p^\ast X$,
$J{\tilde{X}}=\big(\frac{\partial\tilde{X}_i}{\partial\tilde{z}_j}\big)_{1\leq i,\,j\leq 2}$, and 
$\,\mathrm{Res}_{\,\tilde{p}}\displaystyle\left\{\frac{\mathrm{Tr}\big(J\tilde{X}\,\big)}
{\tilde{X}_1\tilde{X}_2}d\tilde{z}_1\wedge d\tilde{z}_2\right\}$ is Grothendieck's point residue.\\

The classification of regular foliations on projective surfaces of general type is open. 
Recall that a smooth weighted complete intersection surface $S=V(a_1,\ldots,a_{n-2})\subset\mathbb{P}(\omega)$ 
is of general type whenever $i_S=\sum a_i-\sum \omega_j>0$. In this context, we obtain the following result.

\begin{cor} \label{cwp}
Let $S=V(a_1,\ldots,a_{n-2})\subset\mathbb{P}(\omega)$ be a smooth weighted complete intersection surface, 
and let $\mathcal{F}$ be a one-dimensional foliation of degree $d$ on $\mathbb{P}(\omega)$ 
which leaves $S$ invariant. Suppose $\mathcal{F}|_S$ is generic. Then 
$$\sum_{p\in\mathrm{Sing}\left(\mathcal{F}|_S\right)}\mathrm{BB}_p^{orb}(\mathcal{F}|_S)
=\frac{a_1\cdots a_{n-2}}{\omega_0\cdots\omega_n}\,\,\big(d+\texttt{C}_1(\omega)-\mathcal{W}_1(a)\big)^2.$$
In particular, $\mathrm{Sing}(\mathcal{F}|_S)\neq\varnothing$. 
\end{cor}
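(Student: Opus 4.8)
The plan is to apply the Baum-Bott theorem for one-dimensional holomorphic foliations on the smooth surface $S$, which expresses the sum of the local Baum-Bott indices at the isolated singularities of $\mathcal{F}|_S$ as the integral over $S$ of a characteristic number built from $\mathcal{T}S$ twisted by the line bundle inducing the foliation. Concretely, for a one-dimensional foliation induced by a section of $\mathcal{T}S \otimes \mathcal{O}_{\mathbb{P}(\omega)}(d)|_S$ on a surface, the Baum-Bott formula reads
\begin{eqnarray*}
\sum_{p\in\mathrm{Sing}(\mathcal{F}|_S)}\mathrm{BB}_p^{orb}(\mathcal{F}|_S)=\int_S^{\,orb} c_1\big(\mathcal{T}S\otimes\mathcal{O}_{\mathbb{P}(\omega)}(d)|_S\big)^2.
\end{eqnarray*}
This is the surface analogue of the Poincar\'e-Hopf statement \cite[Theorem 1]{Mig} used in Theorem \ref{teo1}, now with the top Chern class replaced by $c_1^2$; I would cite the orbifold Baum-Bott theorem referenced in the introduction (\cite{BB,Mig2}).

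Next I would compute $c_1\big(\mathcal{T}S\otimes\mathcal{O}_{\mathbb{P}(\omega)}(d)|_S\big)$ using exactly the Chern class bookkeeping from the proof of Theorem \ref{teo1}. Since $S$ has dimension $n-m$ with $m=n-2$, the relevant identity is $c_1(S) = \big(\texttt{C}_1(\omega)-\mathcal{W}_1(a)\big)\eta$, where $\eta = c_1(i^\ast\mathcal{O}_{\mathbb{P}(\omega)}(1))$; equivalently this is the adjunction formula $K_S = \mathcal{O}_S(\mathcal{W}_1(a)-\texttt{C}_1(\omega))$ already invoked in the proof of Theorem \ref{teo3}. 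Twisting by $\mathcal{O}(d)|_S$ adds $d\,\eta$, so $c_1\big(\mathcal{T}S(d)\big) = \big(d+\texttt{C}_1(\omega)-\mathcal{W}_1(a)\big)\eta$. Squaring and using that $a_1\cdots a_{n-2}\,\eta^{n-2}$ is the Poincar\'e dual of $S$ together with $\int_{\mathbb{P}(\omega)}^{\,orb}\eta^n = \tfrac{1}{\omega_0\cdots\omega_n}$ yields
\begin{eqnarray*}
\sum_{p}\mathrm{BB}_p^{orb}(\mathcal{F}|_S)=\frac{a_1\cdots a_{n-2}}{\omega_0\cdots\omega_n}\big(d+\texttt{C}_1(\omega)-\mathcal{W}_1(a)\big)^2,
\end{eqnarray*}
which is the claimed formula.

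For the final assertion, that $\mathcal{F}|_S$ is singular, I would argue by contradiction: if $\mathrm{Sing}(\mathcal{F}|_S)=\varnothing$, the left-hand side is an empty sum, hence zero, forcing $d+\texttt{C}_1(\omega)-\mathcal{W}_1(a)=0$. But Theorem \ref{teo3} (with $\dim V = 2$, i.e. $m=n-2$) gives $\mathcal{W}_1(a)+2-1 \leq d+\texttt{C}_1(\omega)$, i.e. $d+\texttt{C}_1(\omega)-\mathcal{W}_1(a)\geq 1 > 0$, a contradiction. Alternatively one can note the Poincar\'e-Hopf count $\#\,\mathrm{Sing}(\mathcal{F}|_S) = \tfrac{a_1\cdots a_{n-2}}{\omega_0\cdots\omega_n}\big(d+\texttt{C}_1(\omega)-\mathcal{W}_1(a)\big)\big(d+\texttt{C}_1(\omega)-\mathcal{W}_1(a)+ \texttt{C}_2(\omega)-\cdots\big)$ from Theorem \ref{teo1} is strictly positive for the same reason, so $\mathcal{F}|_S$ must have a singular point.

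The main obstacle I anticipate is purely a matter of citation and setup rather than computation: one must make sure the orbifold Baum-Bott theorem is available in the precise form stated above for quasi-smooth weighted complete intersection surfaces (which are themselves orbifolds), with the integral understood in the orbifold sense $\int^{\,orb}$, and that the local residues $\mathrm{BB}_p^{orb}$ defined just before the corollary are exactly the ones appearing in that global formula. Once that identification is granted, the Chern class manipulation is a verbatim specialization of the argument already given for Theorem \ref{teo1}, and the non-vanishing is an immediate consequence of the Poincar\'e-type bound in Theorem \ref{teo3}.
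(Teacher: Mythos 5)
Your proposal is correct and follows essentially the same route as the paper: the orbifold Baum--Bott/residue theorem of \cite[Theorem 1]{Mig} applied to the invariant polynomial $c_1^2$, the identity $c_1(\mathcal{T}S(d))=\big(d+\texttt{C}_1(\omega)-\mathcal{W}_1(a)\big)\eta$ from the Chern class computation in Theorem \ref{teo1}, integration against the Poincar\'e dual of $S$, and Theorem \ref{teo3} to get $d+\texttt{C}_1(\omega)-\mathcal{W}_1(a)\geq 1$ and hence nonemptiness of the singular set. No substantive differences from the paper's argument.
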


\begin{proof} 
Since $\mathrm{tr}$ denotes the invariant symmetric polynomial $C_1$,
from theorem \cite[Theorem 1]{Mig}, and from the proof of Theorem \ref{teo1}, we have 
\begin{eqnarray*}
\sum_{p\in\mathrm{Sing}\left(\mathcal{F}|_S\right)}\mathrm{BB}_p^{orb}(\mathcal{F}|_S)
&=&\int_S^{\,orb} c_1^{\,2}\left(\mathcal{T}S\otimes\mathcal{O}_S(d)\right)\\
&=&\int_S^{\,orb} \big(c_1(\mathcal{T}S)+c_1(\mathcal{O}_S(d))\big)^2\\
&=&\int_S^{\,orb} \big((\texttt{C}_1(\omega)-\mathcal{W}_1(a))\eta+d\eta\big)^2\\
&=&\big(\texttt{C}_1(\omega)-\mathcal{W}_1(a)+d\,\big)^2\int_S^{\,orb}\eta^2\\
&=&\frac{a_1\cdots a_{n-2}}{\omega_0\cdots\omega_n}\,\big(d+\texttt{C}_1(\omega)-\mathcal{W}_1(a)\big)^2.
\end{eqnarray*}
\\
In particular, $\mathrm{Sing}(\mathcal{F}|_S)\neq\varnothing$. In fact, from Theorem \ref{teo3} we have 
$d+\texttt{C}_1(\omega)-\mathcal{W}_1(a)\geq 1$.
\end{proof}


\subsection{Induced distributions}

Set $n\geq m+2$.
Let $\mathbb{P}(\omega)$ be a well formed $n$-dimensional weighted projective space, 
and $V=V(a_1,\ldots,a_m)\subset\mathbb{P}(\omega)$ be a smooth weighted complete intersection.
By \cite[Theorem 3.2.4]{Do}, $\mbox{Pic}(V)\cong\mathbb{Z}$ if $n-m\geq3$. 
Furthermore $\omega_V\cong\mathcal{O}_V(\sum a_i-\sum \omega_j)$.

The inclusion map $i:V\hookrightarrow\mathbb{P}(\omega)$ induces a restriction map
$$i^{\ast}:H^{0}(\mathbb{P}(\omega),\Omega_{\mathbb{P}(\omega)}^1\otimes\mathcal{O}_{\mathbb{P}(\omega)}(d))
\rightarrow H^{0}(V,\Omega_{V}^1(d)).$$
By \cite[Proposition 5.4]{Ca}, if $n-m\geq3$, $2\leq a_1\leq a_2\leq\cdots\leq a_m$, $V(a_1,a_2,\ldots,a_i)$ is smooth 
for every $1\leq i\leq m$, and $d\leq a_1$, then $i^{\ast}$ is an isomorphism.

By Bott-type vanishing formulas for $V$, $H^{0}(V,\Omega_{V}^1(d))=0$ for $d<1$; 
see \cite[Satz 8.11]{Fle}. For the rest of this section, we will make the assumption that all distributions 
$\mathcal{D}$ on $V$ are induced by non-trivial sections $\omega$ in the image of $i^{\ast}$, 
and such that $d:=\mathrm{deg}(\mathcal{D})\geq1$.


\begin{obs} \label{ob1} 
Suppose $\mathcal{D}$ is a codimension one distribution of degree $d$ on $V$ with isolated singularities.
By Theorem \ref{teo1}, it is easy to see that
$$
\#\,\mathrm{Sing}(\mathcal{D})
=\frac{a_1\cdots a_m}{\omega_0\cdots\omega_n}\sum_{i=0}^{n-m}(-1)^i
\left\{\sum_{j=0}^i(-1)^j\texttt{C}_{\,i-j}(\omega)\cdot\mathcal{W}_j(a)\right\}d^{\,n-m-i}.
$$
\end{obs}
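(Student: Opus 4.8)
The plan is to repeat the computation in the proof of Theorem \ref{teo1} verbatim, with the tangent orbibundle $\mathcal{T}V$ replaced by its dual $\Omega^1_V$, since a codimension one distribution $\mathcal{D}$ of degree $d$ on $V$ is by definition a nonzero global section $\omega$ of the rank-$(n-m)$ orbibundle $\Omega^1_V(d)=\Omega^1_V\otimes\mathcal{O}_{\mathbb{P}(\omega)}(d)|_V$ on the $(n-m)$-dimensional orbifold $V$. First I would invoke the Bott residue (orbifold Poincar\'e--Hopf) formula for codimension one distributions, the $\Omega^1$-counterpart of \cite[Theorem 1]{Mig} used above for foliations (see also \cite{Mig2}): when $\mathrm{Sing}(\mathcal{D})$ is finite, its length localizes the top Chern class of the defining orbibundle, so that
$$\#\,\mathrm{Sing}(\mathcal{D})=\int_V^{\,orb}c_{n-m}\bigl(\Omega^1_V(d)\bigr).$$

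Next I would feed in the duality $\Omega^1_V=(\mathcal{T}V)^{\vee}$, which yields $c_i(\Omega^1_V)=(-1)^i\,c_i(\mathcal{T}V)$, together with the identity already established in the proof of Theorem \ref{teo1},
$$c_i(\mathcal{T}V)=\Bigl\{\sum_{j=0}^{i}(-1)^j\texttt{C}_{i-j}(\omega)\,\mathcal{W}_j(a)\Bigr\}\eta^{\,i},\qquad 0\le i\le n-m,$$
where $\eta=c_1\bigl(i^{\ast}\mathcal{O}_{\mathbb{P}(\omega)}(1)\bigr)$. Expanding $c_{n-m}\bigl(\Omega^1_V(d)\bigr)=\sum_{i=0}^{n-m}c_i(\Omega^1_V)\,c_1\bigl(\mathcal{O}_{\mathbb{P}(\omega)}(d)|_V\bigr)^{\,n-m-i}$ and substituting $c_i(\Omega^1_V)=(-1)^i c_i(\mathcal{T}V)$ and $c_1\bigl(\mathcal{O}_{\mathbb{P}(\omega)}(d)|_V\bigr)=d\,\eta$ gives
$$c_{n-m}\bigl(\Omega^1_V(d)\bigr)=\sum_{i=0}^{n-m}(-1)^i\Bigl\{\sum_{j=0}^{i}(-1)^j\texttt{C}_{i-j}(\omega)\,\mathcal{W}_j(a)\Bigr\}\,d^{\,n-m-i}\,\eta^{\,n-m}.$$

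Finally I would integrate over $V$ exactly as in the proof of Theorem \ref{teo1}: since $a_1\cdots a_m\,\eta^{\,m}$ is the Poincar\'e dual of $V$ in $\mathbb{P}(\omega)$ and $\int_{\mathbb{P}(\omega)}^{\,orb}c_1\bigl(\mathcal{O}_{\mathbb{P}(\omega)}(1)\bigr)^n=\tfrac{1}{\omega_0\cdots\omega_n}$, one obtains the asserted closed form. Equivalently, using the elementary identity $c_r(E^{\vee}\otimes L)=(-1)^r c_r(E\otimes L^{\vee})$ for a rank-$r$ bundle $E$ on an $r$-dimensional base, one sees that $\#\,\mathrm{Sing}(\mathcal{D})$ equals $(-1)^{n-m}$ times the right-hand side of Theorem \ref{teo1} with $d$ replaced by $-d$, which simplifies via $(-1)^{n-m}(-d)^{\,n-m-i}=(-1)^i d^{\,n-m-i}$ to the stated formula; this is exactly the content of the remark. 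There is no real obstacle here: the single non-formal ingredient is the Bott residue formula for codimension one distributions on compact orbifolds, which parallels the foliation case already in force, and everything else is the same Chern-class bookkeeping as in the proof of Theorem \ref{teo1} with the one change $c_i\mapsto(-1)^i c_i$.
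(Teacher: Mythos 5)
Your argument is correct and is exactly the route the paper intends: the remark is stated as an immediate consequence of Theorem \ref{teo1}, the only changes being the localization of $c_{n-m}\bigl(\Omega^1_V(d)\bigr)$ at the isolated zeros of the defining section and the duality $c_i(\Omega^1_V)=(-1)^i c_i(\mathcal{T}V)$, which produces the extra factor $(-1)^i$ in each term. Your sign bookkeeping, including the equivalent reformulation via $c_r(E^{\vee}\otimes L)=(-1)^r c_r(E\otimes L^{\vee})$ and the substitution $d\mapsto -d$, matches the stated formula.
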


\begin{cor} \label{co1} 
Let $V=V(a_1,\ldots,a_m)\subset\mathbb{P}(\omega)$ be a smooth weighted complete intersection.
Suppose $\mathcal{D}$ is a codimension one generic distribution of degree $d$ on $V$. Set
$$\alpha(\omega,a)=\sum_{j=0}^{n-m}(-1)^j\texttt{C}_{\,n-m-j}(\omega)\cdot\mathcal{W}_j(a).$$
Then, $\mathcal{D}$ is singular if $d\nmid\alpha(\omega,a)$. Moreover,
$$\chi(V)=\dfrac{a_1\cdots a_m}{\omega_0\cdots\omega_n}\alpha(\omega,a).$$
\end{cor}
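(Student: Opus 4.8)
The plan is to deduce Corollary \ref{co1} directly from Theorem \ref{teo1} together with Remark \ref{ob1}. First I would record the two cases separately. For the Euler characteristic, apply Theorem \ref{teo1} with $d=0$: a one-dimensional foliation of degree $0$ on $V$ has $\#\,\mathrm{Sing} = \int_V^{orb} c_{n-m}(\mathcal{T}V) = \chi_{orb}(V)$, and since $V$ is smooth $\chi_{orb}(V)=\chi(V)$. Setting $d=0$ in the formula of Theorem \ref{teo1} kills every term except $i=n-m$, leaving
\[
\chi(V) = \frac{a_1\cdots a_m}{\omega_0\cdots\omega_n}\sum_{j=0}^{n-m}(-1)^j\,\texttt{C}_{\,n-m-j}(\omega)\cdot\mathcal{W}_j(a) = \frac{a_1\cdots a_m}{\omega_0\cdots\omega_n}\,\alpha(\omega,a),
\]
which is exactly the second assertion. (Alternatively one may take the $d\to$ coefficient-extraction route and read off the constant term of the polynomial in Remark \ref{ob1}, but the $d=0$ substitution is cleanest; one must only remark that the hypothesis $\deg(\mathcal{D})\geq 1$ used elsewhere in the subsection is not needed for this purely numerical identity.)

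For the singularity statement, suppose $\mathcal{D}$ is a codimension one generic distribution of degree $d\geq 1$ on $V$, so $\mathrm{Sing}(\mathcal{D})$ is at most a finite set and $\#\,\mathrm{Sing}(\mathcal{D})$ is a nonnegative integer given by the formula of Remark \ref{ob1}. I would expand that expression as a polynomial in $d$: grouping by powers of $d$, the top-degree term ($i=0$) is $\frac{a_1\cdots a_m}{\omega_0\cdots\omega_n}\,\texttt{C}_{n-m}(\omega)\,d^{\,n-m}$ and the constant term ($i=n-m$) is precisely $\frac{a_1\cdots a_m}{\omega_0\cdots\omega_n}\,\alpha(\omega,a)$, up to the sign $(-1)^{n-m}$ which, together with the $(-1)^j$ inside, is absorbed into the definition of $\alpha(\omega,a)$ as written; every intermediate term carries a factor $d$. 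Hence
\[
\#\,\mathrm{Sing}(\mathcal{D}) = \pm\,\frac{a_1\cdots a_m}{\omega_0\cdots\omega_n}\,\alpha(\omega,a) + d\cdot(\text{integer combination}),
\]
so if $\mathcal{D}$ were regular we would get $0 = \pm\frac{a_1\cdots a_m}{\omega_0\cdots\omega_n}\alpha(\omega,a) + d\cdot(\text{integer})$, forcing $d \mid \frac{a_1\cdots a_m}{\omega_0\cdots\omega_n}\alpha(\omega,a) = \chi(V)$, and a fortiori $d\mid \alpha(\omega,a)$ once one checks the prefactor is an integer (which follows because $\chi(V)\in\mathbb{Z}$ and, more carefully, because each summand $\binom{i}{j}$-type term in the original Chern class computation is integral). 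Contrapositively, $d\nmid\alpha(\omega,a)$ implies $\mathcal{D}$ is singular.

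The only genuinely delicate point I anticipate is the integrality bookkeeping: $\frac{a_1\cdots a_m}{\omega_0\cdots\omega_n}\alpha(\omega,a)$ equals $\chi(V)\in\mathbb{Z}$, but to conclude $d\mid\alpha(\omega,a)$ from $d\mid\frac{a_1\cdots a_m}{\omega_0\cdots\omega_n}\alpha(\omega,a)$ one needs the full expression $\#\,\mathrm{Sing}(\mathcal{D})$ to be an integer polynomial in $d$ with the coefficients lying in the sublattice generated by $\alpha(\omega,a)$ and $d$ — this is really the same divisibility argument used in the Corollary after Proposition \ref{pro1}, where $\gcd\{d_i\}\mid\texttt{C}_n(h)$ is deduced, and I would mimic that argument verbatim: write $0 = \#\,\mathrm{Sing}(\mathcal{D})$ as $\pm\alpha(\omega,a)\cdot(\text{unit-ish prefactor}) + \sum_i b_i d$ with $b_i\in\mathbb{Z}$ after clearing denominators against $\chi(V)\in\mathbb{Z}$, and read off $d\mid\alpha(\omega,a)$. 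The remaining steps are the routine substitutions above and cost nothing.
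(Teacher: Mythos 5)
Your Euler--characteristic half is essentially the paper's own argument: the identity comes from $\chi(V)=\int_V^{orb}c_{n-m}(\mathcal{T}V)$ (Gauss--Bonnet for orbifolds, which the paper cites from Blache) combined with the expression for $c_{n-m}(\mathcal{T}V(d))$ computed in the proof of Theorem \ref{teo1}, evaluated at $d=0$. One presentational caveat: you cannot literally ``apply Theorem \ref{teo1} with $d=0$,'' since that theorem presupposes an actual foliation of that degree leaving $V$ invariant with isolated singularities; what you are really invoking is the Chern-class computation inside its proof, which is valid for every $d$ -- this is harmless but should be said.

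The divisibility half has a genuine flaw as written. From regularity you derive $0=\pm\frac{a_1\cdots a_m}{\omega_0\cdots\omega_n}\,\alpha(\omega,a)+d\cdot(\cdots)$, conclude $d\mid\chi(V)$, and then claim ``a fortiori $d\mid\alpha(\omega,a)$ once one checks the prefactor is an integer.'' That inference is backwards: $d\mid c\,\alpha$ with $c\in\mathbb{Z}$ does not give $d\mid\alpha$ (take $d=4$, $c=2$, $\alpha=2$); moreover the prefactor $\frac{a_1\cdots a_m}{\omega_0\cdots\omega_n}$ is in general not an integer (it equals $\frac{1}{k}$ for a degree-one hypersurface in $\mathbb{P}(1,1,1,k)$, as in the paper's own examples), and after multiplying by it the coefficients of the $d$-terms are only rational, so even the intermediate claim $d\mid\chi(V)$ is not justified. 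The paper's proof avoids all of this by dividing the prefactor out \emph{before} arguing divisibility: since $\frac{a_1\cdots a_m}{\omega_0\cdots\omega_n}\neq 0$, regularity forces the integer-coefficient polynomial $P(d)=\sum_{i=0}^{n-m}(-1)^i\bigl\{\sum_{j=0}^{i}(-1)^j\texttt{C}_{\,i-j}(\omega)\,\mathcal{W}_j(a)\bigr\}d^{\,n-m-i}$ itself to vanish; its constant term is $(-1)^{n-m}\alpha(\omega,a)$ and every other term carries a factor of $d$, whence $d\mid\alpha(\omega,a)$ directly. Your closing remark about ``clearing denominators'' gestures at this, but the step you actually need is to strip off the rational prefactor and work with the integer polynomial $P(d)$, not to route the argument through $\chi(V)$ or through an integrality claim on the prefactor that fails in general.
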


\begin{proof} By Remark \ref{ob1}, if $\mathrm{Sing}\left(\mathcal{D}\right)=\varnothing$, then we can write
$$0=\sum_{i=0}^{n-m}(-1)^i\left\{\sum_{j=0}^i(-1)^j\texttt{C}_{\,i-j}(\omega)\cdot\mathcal{W}_j(a)\right\}d^{\,n-m-i}
=(-1)^{n-m} \alpha(\omega,a) + \sum_{i=1}^{n-m}\lambda_i d^{\,i},$$
for some $\lambda_i\in\mathbb{Z}$. Then $d \mid \alpha(\omega,a)$.
The second part follows from $\chi(V)=(-1)^{n-m}\int^{\,orb}_Vc_{n-m}\left(\Omega_{V}^1\right)$; see \cite{Bla}.
\end{proof}

\vspace{0,1cm}

Now we present two examples that provide a numerical illustration of our formula above 

\begin{exe} $\mbox{}$
\vspace{0,1cm}
\begin{enumerate}
\item Consider the foliation $\mathcal{F}$ of degree $d=2k$ 
on $\mathbb{P}(\omega)=\mathbb{P}(1,1,1,k)$, $k>1$, defined by 
$$\Omega=kz_3z_0^{k-1}dz_0+kz_3z_1^{k-1}dz_1+kz_3z_2^{k-1}dz_2-\big(z_0^k+z_1^k+z_2^k\big)dz_3.$$ 
Note that $\mathrm{Sing}(\mathcal{F})=\left\{\bar{e}_3,\mathcal{C}\right\}$, where 
$\mathcal{C}=\left\{(z_0,z_1,z_2,0)\,|\,z_0^k+z_1^k+z_2^k=0\right\}$. Now consider the hypersurface $V=\left\{z_0=0\right\}$ of degree 
$a=1$. Then
$$\Omega|_V=kz_3z_1^{k-1}dz_1+kz_3z_2^{k-1}dz_2-\big(z_1^k+z_2^k\big)dz_3.$$
Set $\mu_k=\left\{\alpha\in\mathbb{C}^{\ast}:\alpha^k=1\right\}$. Then
$$\mathrm{Sing}\left(\mathcal{F}|_V\right)=\big\{\left[0:0:0:1\right], \left[0:1:i\alpha:0\right]\,:\,\,\alpha\in\mu_k\big\}.$$ 
In coordinates $\big(U_1,\left\{+1\right\}\big)\hspace{-1mm}:z_1\neq0$, we have 
$\Omega|_V=kz_3z_2^{k-1}dz_2-(1+z_2^k)dz_3$ and $\mathcal{I}^{orb}_{(0,i\alpha,0)}=1$ for all $\alpha\in\mu_k$. 
In coordinates $\big(U_3,\mu_k\big)\hspace{-1mm}:z_3\neq0$, we have $\Omega|_V=kz_1^{k-1}dz_1+kz_2^{k-1}dz_2$ and  
$\mathcal{I}^{orb}_{(0,0,0)}=\frac{(k-1)^2}{k}$. Since $\texttt{C}_1(\omega)=3+k$ and $\texttt{C}_2(\omega)=3+3k$, we obtain 
\begin{eqnarray*}
\#\,\mathrm{Sing}(\mathcal{F}|_V)&=&\frac{a}{\omega_0\,\omega_1\,\omega_2\,\omega_3}
\left\{d^{\,2}-(\texttt{C}_1(\omega)-a)d+\texttt{C}_2(\omega)-\texttt{C}_1(\omega)a+a^2\right\} \\ \\
&=& \frac{1}{k}\left\{(2k)^{\,2}-(2+k)(2k)+(3+3k)-(3+k)+1\right\} \\ \\
&=& \frac{1}{k}\left\{2k^2-2k+1\right\}=k+\frac{(k-1)^2}{k} \\ \\
&=& \sum_{p\in\mathrm{Sing}\left(\mathcal{F}|_V\right)}\mathcal{I}_p^{orb}\left(\mathcal{F}|_V\right).
\end{eqnarray*}
\vskip 0.1cm
\item Consider the distribution $\mathcal{D}$ of degree $d=\omega_0+\omega_1=\omega_2+\omega_3$ 
on $\mathbb{P}(\omega_0,\omega_1,\omega_2,\omega_3)$, defined by 
$$\Omega=\lambda\,\omega_0z_0dz_1-\lambda\,\omega_1z_1dz_0+\omega_2z_2dz_3-\omega_3z_3dz_2,\,\,\lambda\in\mathbb{C}^{\ast}.$$ 
Note that $\#\,\mathrm{Sing}(\mathcal{D})=\varnothing$.
Now consider the hypersurface $V=\left\{z_0=0\right\}$ of degree $a=\omega_0$. 
Then $\Omega|_V=\omega_2z_2dz_3-\omega_3z_3dz_2$ and $\mathrm{Sing}(\mathcal{D}|_V)=\left\{\bar{e}_1\right\}$.
Note that $\texttt{C}_1(\omega)=2d$ and $\texttt{C}_2(\omega)=d^{\,2}+\omega_0\omega_1+\omega_2\omega_3$.
Therefore, we get
\begin{eqnarray*}
\#\,\mathrm{Sing}(\mathcal{D}|_V)&=&\frac{a}{\omega_0\,\omega_1\,\omega_2\,\omega_3}
\left\{d^{\,2}-(\texttt{C}_1(\omega)-a)d+\texttt{C}_2(\omega)-\texttt{C}_1(\omega)a+a^2\right\} \\ \\
&=& \frac{1}{\omega_1\,\omega_2\,\omega_3}\left\{d^{\,2}-2d^{\,2}+\omega_0d+d^{\,2}+\omega_0\omega_1+\omega_2\omega_3
-2d\omega_0+\omega_0^{\,2}\right\} \\ \\
&=& \frac{1}{\omega_1\,\omega_2\,\omega_3}\left\{\omega_0\omega_1+\omega_2\omega_3-d\omega_0+\omega_0^{\,2}\right\}
=\frac{1}{\omega_1} \\ \\
&=& \mathcal{I}_{\bar{e}_1}^{orb}\left(\mathcal{D}|_V\right).
\end{eqnarray*}
\end{enumerate}
\end{exe}

\vspace{0,1cm}

\begin{cor} \label{coror}
Let $V\subset\mathbb{P}(1,1,1,k)$ be a smooth weighted hypersurface.
Then there are no codimension one regular distribution on $V$.
\end{cor}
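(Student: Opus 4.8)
The plan is to use Corollary \ref{co1} together with the numerical formula from Theorem \ref{teo1} (equivalently Remark \ref{ob1}) and derive a contradiction from the assumption that a regular codimension one distribution exists. Concretely, suppose $\mathcal{D}$ is a codimension one regular distribution of degree $d$ on a smooth weighted hypersurface $V=V(a)\subset\mathbb{P}(1,1,1,k)$; here $n=3$, $m=1$, $\omega=(1,1,1,k)$, so $\dim V=2$ and the relevant Chern number is $c_2$. First I would write down $\#\,\mathrm{Sing}(\mathcal{D})$ explicitly in this case: by Remark \ref{ob1}, with $\texttt{C}_1(\omega)=k+3$, $\texttt{C}_2(\omega)=3k+3$, and $\mathcal{W}_1(a)=a$, $\mathcal{W}_2(a)=a^2$, one gets
\begin{eqnarray*}
\#\,\mathrm{Sing}(\mathcal{D})=\frac{a}{k}\Big(d^2-(k+3-a)d+(3k+3)-(k+3)a+a^2\Big).
\end{eqnarray*}
Setting this to zero (regularity) forces the quadratic in $d$ to vanish, which I would analyze for positivity.

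Next I would invoke the constraints on the admissible parameters. By the standing assumptions of the subsection (the paragraph before Remark \ref{ob1}), the distribution comes from a section in the image of $i^\ast$ with $d\geq 1$, and $V$ is smooth; also by the Bott-type vanishing $H^0(V,\Omega^1_V(d))=0$ for $d<1$. Smoothness of a hypersurface of degree $a$ in $\mathbb{P}(1,1,1,k)$ restricts $a$: either $a$ is a positive integer that is a multiple of nothing special but the generic member is smooth only for suitable $a$ (in particular $a\geq 1$, and when $k>1$ one needs $k\mid a$ or $a<k$ carefully — the well-formedness and quasi-smoothness conditions of Section 2 apply). The key point is that all of $a,d,k$ are positive integers with $k\geq 1$. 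Then I would show the expression $d^2-(k+3-a)d+a^2-(k+3)a+3k+3$ cannot be zero for such integers: completing the square in $d$, the discriminant is $(k+3-a)^2-4\big(a^2-(k+3)a+3k+3\big)=(k+3)^2+2(k+3)a-3a^2-4(3k+3)=-3a^2+2(k+3)a+(k^2-6k-3)$, and I would bound this to conclude either that it is negative (no real root) or that no positive-integer root exists.

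Alternatively — and this is likely the cleaner route — I would apply Corollary \ref{co1} directly: $\mathcal{D}$ regular implies $\#\,\mathrm{Sing}(\mathcal{D})=0$, hence $d\mid\alpha(\omega,a)$ where $\alpha(\omega,a)=\texttt{C}_2(\omega)-\texttt{C}_1(\omega)a+a^2=a^2-(k+3)a+3k+3$, and moreover the vanishing of the full sum pins down $d$ in terms of $a,k$ from the quadratic $d^2-(k+3-a)d+\alpha(\omega,a)=0$, giving $d=\tfrac{1}{2}\big((k+3-a)\pm\sqrt{\text{disc}}\big)$. Since $d$ must be a positive integer $\geq 1$ and $a\geq 1$, I would rule out both sign choices by a short case check on the sign and integrality of the discriminant, separating $a\geq k$ from $a<k$ and using $k\geq 1$. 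The main obstacle I anticipate is the bookkeeping of which degrees $a$ actually yield a \emph{smooth} hypersurface in $\mathbb{P}(1,1,1,k)$ and pinning down the exact admissible range of $d$ (using the cited results \cite[Proposition 5.4]{Ca} and \cite[Satz 8.11]{Fle}); once the parameter ranges are fixed, the impossibility of the quadratic having an admissible root is an elementary estimate. I would finish by noting this forces $\mathrm{Sing}(\mathcal{D})\neq\varnothing$, contradicting regularity, which proves the corollary.
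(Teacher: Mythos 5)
Your setup is the same as the paper's: specialize Remark \ref{ob1} to $n=3$, $m=1$, $\omega=(1,1,1,k)$, getting
$\#\,\mathrm{Sing}(\mathcal{D})=\tfrac{a}{k}\bigl(d^{2}-(k+3-a)d+a^{2}-(k+3)a+3k+3\bigr)$,
and set it to zero. But the finishing argument you propose has a genuine gap: you treat the smoothness of $V$ as ``bookkeeping'' to be sorted out later, whereas it is exactly the missing arithmetic input, and without it no discriminant or integrality analysis can succeed. Indeed, the quadratic $d^{2}-(k+3-a)d+a^{2}-(k+3)a+3k+3=0$ \emph{does} have solutions in positive integers when $k\nmid a$: for instance $(a,k,d)=(1,4,3)$ and $(a,k,d)=(2,3,2)$ both satisfy it (your discriminant $-3a^{2}+2(k+3)a+k^{2}-6k-3$ equals $0$ in both cases). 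So the claim ``the discriminant is negative or there is no positive-integer root'' is false in the generality you work in, and the weaker divisibility $d\mid\alpha(\omega,a)$ from Corollary \ref{co1} cannot rescue it either. The indispensable fact is that a \emph{smooth} (not merely quasi-smooth) hypersurface of degree $a$ in $\mathbb{P}(1,1,1,k)$ must satisfy $k\mid a$ (it cannot pass through the singular point $\bar e_{3}$, so the monomial $z_{3}^{a/k}$ must occur), hence $a\geq k$. With that input your route does close: for $a\geq k$ one checks the discriminant is negative (e.g.\ at $a=k$ it equals $-3$, and it is decreasing beyond the upper root, which is $<k$ for $k\geq3$; the cases $k\leq2$ are immediate), so there is no real root at all.

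For comparison, the paper avoids the discriminant entirely: it rewrites the vanishing as $k(a+d-3)=a^{2}+a(d-3)+(d^{2}-3d+3)$, disposes of $a+d\leq3$ directly, and for $a+d>3$ observes that the right side exceeds $a\cdot(a+d-3)$ by $d^{2}-3d+3>0$, so $k>a$ --- contradicting $k\mid a$ (hence $k\leq a$). Either way, the proof lives or dies on the divisibility $k\mid a$; you must state and use it explicitly rather than defer it.
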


\begin{proof} Suppose that there is a codimension one regular distribution $\mathcal{D}$ on $V$.
Set $a=\mathrm{deg}(V)$ and $d=\mathrm{deg}(\mathcal{D})$. Since $\texttt{C}_1(\omega)=3+k$, 
$\texttt{C}_2(\omega)=3+3k$ and $\#\,\mathrm{Sing}(\mathcal{D}|_V)=0$, we have
\begin{eqnarray*}
0&=&d^{\,2}-(\texttt{C}_1(\omega)-a)d+\texttt{C}_2(\omega)-\texttt{C}_1(\omega)a+a^2 \\
&=&d^{\,2}-(3+k-a)d+(3+3k)-(3+k)a+a^2,
\end{eqnarray*}
which implies 
$$k\left(a+d-3\right)=a^2+a(d-3)+(d^{\,2}-3d+3).$$ 

Note that this equation has no solutions for $a+d\leq3$.
Therefore we may assume that $a+d>3$, and so
$$k=\frac{a^2+a(d-3)+(d^{\,2}-3d+3)}{a+(d-3)}>a,$$ 
a contradiction; since $V$ is a smooth weighted hypersurface, $k|a$. 
\end{proof}

\vspace{0,1cm}

\begin{teo} \label{teo2} 
Let $V\subset\mathbb{P}(1,1,1,1,k)$ be a smooth hypersurface, and let
$\mathcal{D}$ be a regular distribution on $V$. Then $\mathrm{deg}(V)=k$ and $\mathrm{deg}(\mathcal{D})=2$.
\end{teo}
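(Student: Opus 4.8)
The plan is to specialize the computation of $\#\,\mathrm{Sing}(\mathcal{D}|_V)$ from Theorem \ref{teo1} and Remark \ref{ob1} to the case $\mathbb{P}(\omega) = \mathbb{P}(1,1,1,1,k)$, $m=1$, with $V$ a smooth hypersurface of degree $a=\deg(V)$ and $\mathcal{D}$ a codimension one distribution of degree $d=\deg(\mathcal{D})$. Here $n=4$, so $V$ is a threefold and $\mathcal{D}|_V$ generically has isolated singularities; regularity forces the count to vanish. With $\texttt{C}_1(\omega)=4+k$, $\texttt{C}_2(\omega)=6+6k$, $\texttt{C}_3(\omega)=4+4k$, $\texttt{C}_4(\omega)=1+k$, and (since $m=1$) $\mathcal{W}_j(a)=a^j$, the formula in Remark \ref{ob1} reduces to a polynomial identity
$$0 = \sum_{i=0}^{3}(-1)^i\Big(\sum_{j=0}^i(-1)^j\texttt{C}_{i-j}(\omega)\,a^j\Big)d^{\,3-i},$$
which I would expand into a single Diophantine equation $F(a,d,k)=0$ of degree $3$ in $d$ and degree $3$ in $a$, with coefficients linear in $k$.

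The next step is to exploit divisibility: since $V\subset\mathbb{P}(1,1,1,1,k)$ is a smooth (hence quasi-smooth and well-formed) hypersurface, $k \mid a$, so write $a=k\,b$ for a positive integer $b$; also recall from the standing assumption preceding Remark \ref{ob1} that $d\geq 1$. Substituting and collecting, the equation $F(kb,d,k)=0$ should become a constraint that, modulo $k$, pins down a congruence on $d$ (much as in the proof of Corollary \ref{coror}, where one isolates $k$ on one side and derives $k > a$ unless the numerator and denominator conspire). Concretely I expect to be able to solve for $k$ as a rational function of $a$ and $d$ — say $k = G(a,d)/H(a,d)$ with $H$ of degree one in $a$ — and then argue that $k \mid a$ together with positivity forces $H(a,d)$ to be small, leaving only finitely many $(a,d)$ to check by hand. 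Among these, the only one surviving the smoothness/positivity constraints and the requirement $d\geq 1$ should be $b=1$ (i.e. $a=k$) and $d=2$, giving the claimed conclusion. Alternatively, one can plug $a=k$ into $F$ directly, obtaining a polynomial in $d$ and $k$ that factors so as to yield $d=2$, and separately rule out $b\geq 2$.

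The main obstacle will be the bookkeeping in the last elimination step: the equation $F(a,d,k)=0$ is genuinely a surface in $(a,d,k)$-space, so vanishing of the Euler-type count alone does not immediately isolate a point — one really needs the arithmetic input $k\mid a$ (and $a,d,k$ positive integers, $k>1$) to cut it down. I would organize this by first reducing mod small primes or mod $k$ to kill the $b\geq 2$ branch, then handling $a=k$ as a clean one-variable problem. A secondary subtlety is making sure the hypotheses of Theorem \ref{teo1} genuinely apply: $\mathcal{D}|_V$ regular is stronger than "isolated singularities", so the formula gives $0$ on the nose; and one should note that $V$ being smooth (not merely quasi-smooth) guarantees $\chi_{orb}$ coincides with the honest Euler characteristic, though that is only needed for the interpretation, not the argument. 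Once the divisibility is in hand, the rest is the routine factoring sketched above, which I would not belabor.
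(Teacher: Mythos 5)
Your overall strategy coincides with the paper's: specialize the vanishing of the singular count from Remark \ref{ob1} to $\mathbb{P}(1,1,1,1,k)$ with $m=1$, isolate $k$ in the resulting cubic Diophantine relation in $(a,d,k)$, and use $k\mid a$ together with positivity to reduce to finitely many cases. But two things go wrong in the execution as sketched. First, your elementary symmetric functions are miscomputed: for $\omega=(1,1,1,1,k)$ one has $\texttt{C}_2(\omega)=6+4k$ and $\texttt{C}_3(\omega)=4+6k$ (and $\texttt{C}_4(\omega)=1+4k$, which in any case does not enter, since the sum stops at $i=3$), not $6+6k$ and $4+4k$. This is not cosmetic: with your values the pair $(a,d)=(k,2)$ does not satisfy your equation $F(a,d,k)=0$, so the elimination you describe would not terminate at the advertised answer. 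With the correct values the relation becomes $k\left(a^2+(d-4)a+d^{2}-4d+6\right)=a^3+(d-4)a^2+(d^{2}-4d+6)a+(d^{3}-4d^{2}+6d-4)$, and the paper's growth argument (for $d>2$, $a>2$ one gets $k>a$, contradicting $k\mid a$) plus a direct check of the branches $d\leq 2$ and $a\leq 2$ does the rest; in particular plugging $a=k$ gives $(d-2)(d^2-2d+2)=0$, hence $d=2$, as you anticipated.

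Second, and more seriously, your claim that divisibility and positivity alone isolate $a=k$, $d=2$ is false. The triple $(a,k,d)=(2,1,1)$ --- a smooth quadric threefold in $\mathbb{P}^{4}$ carrying a putative distribution of degree $1$ --- satisfies the Diophantine equation (the $d=1$ branch gives $k=(a-1)^3/(a^2-3a+3)$, which at $a=2$ yields $k=1$), satisfies $k\mid a$ and $d\geq 1$, and survives every positivity or congruence constraint; no reduction mod $k$ or mod small primes can remove it. The paper disposes of this case by a genuinely non-arithmetic input: the restriction isomorphism $H^{0}(\mathbb{P}(\omega),\Omega^1_{\mathbb{P}(\omega)}(1))\simeq H^{0}(V,\Omega^1_V(1))$ of \cite[Proposition 5.4]{Ca} together with Bott's formulas, which give $H^{0}(\mathbb{P}^{4},\Omega^1_{\mathbb{P}^{4}}(1))=0$ and hence $\mathcal{D}=0$, a contradiction; the same exceptional case reappears in the $a=2$ branch, where the equation forces $k=d$ and $k\mid 2$. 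Your proposal, resting only on the Euler-count vanishing plus arithmetic, has no mechanism to exclude this case; you need this vanishing-of-sections step (or an equivalent geometric argument) to complete the proof.
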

\noindent Note that $k=1$, $\mathrm{deg}(V)=1$ and $\mathrm{deg}(\mathcal{D})=2$ was expected.
\begin{proof} Suppose that there is a codimension one regular distribution $\mathcal{D}$ on $V$.
Set $a=\mathrm{deg}(V)$ and $d=\mathrm{deg}(\mathcal{D})$. Since $\texttt{C}_1(\omega)=4+k$, 
$\texttt{C}_2(\omega)=6+4k$, $\texttt{C}_3(\omega)=4+6k$ and $\#\,\mathrm{Sing}(\mathcal{D}|_V)=0$, we have
\begin{eqnarray*}
0&=&d^{\,3}-(\texttt{C}_1(\omega)-a)d^{\,2}+(\texttt{C}_2(\omega)-\texttt{C}_1(\omega)a+a^2)d
-(\texttt{C}_3(\omega)-\texttt{C}_2(\omega)a+\texttt{C}_1(\omega)a^2-a^3) \\
&=&d^{\,3}-(4+k-a)d^{\,2}+(6+4k-(4+k)a+a^2)d-(4+6k-(6+4k)a+(4+k)a^2-a^3),
\end{eqnarray*}
which implies 
$$k\left(a^2+(d-4)a+d^{\,2}-4d+6\right)=a^3+(d-4)a^2+(d^{\,2}-4d+6)a+(d^{\,3}-4d^{\,2}+6d-4).$$ 
This equation has no solutions for $d>2$ and $a>2$. In fact, since $d>2$, this implies $d^{\,3}-4d^{\,2}+6d-4>0$.  
On the other hand $d>2$ and $a>2$ implies $a^2+(d-4)a>a^2-2a>0$. Note that $d^{\,2}-4d+6>0$. Then
$$k=\frac{a^3+(d-4)a^2+(d^{\,2}-4d+6)a+(d^{\,3}-4d^{\,2}+6d-4)}{a^2+(d-4)a+(d^{\,2}-4d+6)}>a.$$
a contradiction; since $V$ is a smooth weighted hypersurface, $k|a$. Therefore we may assume that $d\leq 2$ or $a\leq 2$. \\

Let us consider the case $d\leq 2$. If $d=1$, then 
$$k=\frac{(a-1)^3}{a^2-3a+3}=a-\frac{1}{a(a-3)+3},$$ 
which implies $(a,k)=(1,0)$ (a contradiction) or $(a,k)=(2,1)$. Now, suppose that $(a,k)=(2,1)$. In \cite[Proposition 5.4]{Ca} 
it was proved that there is a natural restriction isomorphism 
$$H^{0}(\mathbb{P}(\omega), \Omega_{\mathbb{P}(\omega)}^1(t))\simeq H^{0}(V, \Omega_{V}^1(t)),$$ 
for $\mathrm{deg}(V)\geq2,t$. Then by Bott's formulas (see \cite{OSS}) we obtain  
$$\dim H^{0}(V, \Omega_{V}^1(1))=\dim H^{0}(\mathbb{P}(\omega), \Omega_{\mathbb{P}(\omega)}^1(1))=0,$$ 
and so $\mathcal{D}=0$, a contradiction. Finally, the case $d=2$ implies
$$k=\frac{a(a^2-2a+2)}{a^2-2a+2}=a.$$

It remains to consider the case $a\leq 2$. If $a=1$, then 
$$k=\frac{(d-1)^3}{d^2-3d+3}=d-\frac{1}{d(d-3)+3},$$ 
which implies $(d,k)=(1,0)$ (a contradiction) or $(d,k)=(2,1)$. Finally, the case $a=2$ implies $k=d$, and since $k|a$, we have 
$(d,k)=(1,1)$ (a contradiction as already seen) or $(d,k)=(2,2)$.
\end{proof}

In general, if $\mathbb{P}(1,\ldots,1,k)$ has even dimension, then there exist smooth weighted hypersurfaces $V$ of degree $k$
and regular codimension one distributions on $V$ of degree two; see next example. 

\begin{exe} \label{exee} Let $\mathbb{P}(\omega)=\mathbb{P}(1,\ldots,1,k)$ be an $n$-dimensional weighted projective space, $n=2m$.
Let $V\subset\mathbb{P}(\omega)$ be a smooth weighted hypersurface of degree $k$, defined by
$$F(z_0,\ldots,z_n)=z_n-\sum_{i=0}^{n-1}\lambda_iz_i^k,\,\,\lambda_i\in\mathbb{C}^{\ast}.$$
Now, consider the distribution of degree two $\mathcal{D}$ on $V$, defined by 
$$\Omega=\sum_{i=0}^{m-1}c_i\left(\omega_{2i}z_{2i}dz_{2i+1}-\omega_{2i+1}z_{2i+1}dz_{2i}\right),\,\,c_i\in\mathbb{C}^{\ast}.$$
Note that $\mathrm{Sing}\left(\mathcal{D}\right)=\left\{\bar{e}_n\right\}$ and 
$\mathrm{Sing}\left(\mathcal{D}|_V\right)=\varnothing$. Therefore $\mathcal{D}|_V$ is a regular distribution.  

Moreover, if we denote by $K_V$ the canonical divisor 
of $V\subset\mathbb{P}(w)$, then we conclude that
$$\mathcal{O}_V\hspace{-0.07cm}\left(-K_V\right)=\mathcal{O}_V\big(\Sigma w_i-k\big)
=\mathcal{O}_V\big(n)=\mathcal{O}_V(2)^{\otimes \frac{\mathrm{deg}(V)+1}{2}},$$
that is, the $1$-form $\Omega$ is a contact form. 
\end{exe} 

\begin{exe} \label{eexe} Consider $m=3$ in the above example. Now, consider the smooth weighted complete intersection threefold 
$V(1,1,k)=V\cap\left\{z_0=z_1=0\right\}$. It is easy to see that 
$\mathrm{Sing}\big(\mathcal{D}|_{V(1,1,k)}\big)=\varnothing$. \\

Moreover, since $\texttt{C}_1(\omega)=6+k$, $\texttt{C}_2(\omega)=15+6k$, $\texttt{C}_3(\omega)=20+15k$, $\mathcal{W}_1(a)=2+k$, 
$\mathcal{W}_2(a)=3+2k+k^2$ and $\mathcal{W}_3(a)=4+3k+2k^2+k^3$, by Remark \ref{ob1} we have
\begin{eqnarray*}
\#\,\mathrm{Sing}\big(\mathcal{D}|_{V(1,1,k)}\big)&=&8-4\left\{\texttt{C}_1(\omega)-\mathcal{W}_1(a)\right\}
+2\left\{\texttt{C}_2(\omega)-\texttt{C}_1(\omega)\mathcal{W}_1(a)+\mathcal{W}_2(a)\right\} \\
&&-\left\{\texttt{C}_3(\omega)-\texttt{C}_2(\omega)\mathcal{W}_1(a)+\texttt{C}_1(\omega)\mathcal{W}_2(a)-\mathcal{W}_3(a)\right\} \\
&=&8-16+12-4=0.
\end{eqnarray*}
\end{exe} 

\vspace{0,1cm}


\section{Bott's residue formula on complete intersections in smooth compact toric varieties.}

Let $\mathbb{P}_{\Delta}$ be an $n$-dimensional smooth compact toric variety.
Let $f_1,\ldots,f_m$ be quasi-homogeneous polynomials, with 
$a_i=\deg (f_i)=\sum_j a_{ij}h_j\in\mathcal{A}_{n-1}(\mathbb{P}_{\Delta})$,
and $V=V(a_1,\ldots,a_m)\subset\mathbb{P}_{\Delta}$ a smooth complete intersection.

\begin{teo} \label{tte} Let $V=V(a_1,\ldots,a_m)\subset\mathbb{P}_{\Delta}$ be a smooth complete intersection, and 
let $\mathcal{F}$ be a one-dimensional holomorphic foliation of degree $d$ on $\mathbb{P}_{\Delta}$, i.e., 
$\mathcal{F}$ is induced by a non-trivial section of $X\in H^{0}(V, \mathcal{T}\mathbb{P}_{\Delta}(d))$.
Suppose that $\mathcal{F}$ leaves $V$ invariant and $\mathcal{F}|_V$ has only isolated singularities. 
Then
$$\#\,\mathrm{Sing}(\mathcal{F}|_V)
=\sum_{i=0}^{n-m}\displaystyle\bigintsss_V\left\{\sum_{j+k=i}(-1)^{j}\mathcal{W}_j(a)\cdot\texttt{C}_{k}(h)\right\}d^{\,n-m-i},$$
where $\texttt{C}_{k}(h)$ is the $k$th elementary symmetric function
of the variables $h_1,\ldots, h_{n+r}$, and $\mathcal{W}_j(a)$ is the Wronski (or complete symmetric function)
of degree $j$ in $m$ variables $a_1,\ldots,a_m$,
$$\mathcal{W}_j(a)=\sum_{i_1+\cdots+i_m=j}a_1^{i_1}\ldots a_m^{i_m}.$$
In particular,
$$\chi(V)=\sum_{j+k=n-m}(-1)^j\int_V\mathcal{W}_j(a)\cdot\texttt{C}_k(h).$$
\end{teo}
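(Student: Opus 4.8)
The plan is to follow the same strategy used in the proof of Theorem \ref{teo1}, replacing the weighted projective space $\mathbb{P}(\omega)$ by the smooth compact toric variety $\mathbb{P}_{\Delta}$ and the single class $\eta$ by the collection of classes $h_1,\ldots,h_{n+r}$. First I would invoke \cite[Theorem 1]{Mig} to write
$$\#\,\mathrm{Sing}(\mathcal{F}|_V)=\int_V c_{n-m}\big(\mathcal{T}V\otimes\mathcal{O}_{\mathbb{P}_{\Delta}}(d)|_V\big),$$
which is legitimate since $\mathcal{F}|_V$ has only isolated singularities and $V$ is smooth (so there are no orbifold correction factors and we may write $\int_V$ rather than $\int_V^{orb}$).

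Next I would compute the total Chern class of $\mathcal{T}V$. From the conormal exact sequence for the smooth complete intersection $V=V(a_1,\ldots,a_m)\subset\mathbb{P}_{\Delta}$, together with the restriction of the generalized Euler sequence, one gets
$$c(\mathcal{T}V)=\frac{c\big(\mathcal{T}\mathbb{P}_{\Delta}|_V\big)}{c\big(\mathcal{N}_{V/\mathbb{P}_{\Delta}}\big)}
=\frac{\prod_{i=1}^{n+r}\big(1+h_i|_V\big)}{\prod_{i=1}^{m}\big(1+a_i|_V\big)}.$$
Here I use, exactly as in Proposition \ref{pro1}, that $c(\mathcal{T}\mathbb{P}_{\Delta})=\prod_{i=1}^{n+r}(1+h_i)$ via the generalized Euler sequence, and that $\mathcal{N}_{V/\mathbb{P}_{\Delta}}\cong\bigoplus_{i=1}^{m}\mathcal{O}_{\mathbb{P}_{\Delta}}(a_i)|_V$. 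Expanding the denominator as a geometric series yields $\big(\prod_i(1+a_i)\big)^{-1}=\sum_{j\geq 0}(-1)^j\mathcal{W}_j(a)$, where $\mathcal{W}_j(a)$ is the complete homogeneous symmetric function of degree $j$ in $a_1,\ldots,a_m$ (the identity $\sum_j(-1)^j\mathcal{W}_j\,t^j=\prod_i(1+a_i t)^{-1}$ being the standard generating-function relation between elementary and complete symmetric functions). Multiplying by $\prod_i(1+h_i)=\sum_k\texttt{C}_k(h)$ and collecting terms in each codimension gives
$$c_i(\mathcal{T}V)=\Big(\sum_{j+k=i}(-1)^j\mathcal{W}_j(a)\,\texttt{C}_k(h)\Big)\Big|_V,\qquad 0\leq i\leq n-m.$$

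Then I would plug this into the expansion $c_{n-m}(\mathcal{T}V\otimes\mathcal{O}(d)|_V)=\sum_{i=0}^{n-m}c_i(\mathcal{T}V)\,c_1(\mathcal{O}_{\mathbb{P}_{\Delta}}(d)|_V)^{n-m-i}$, using that $c_1(\mathcal{O}_{\mathbb{P}_{\Delta}}(d))=d=\sum_i d_i h_i$, and integrate over $V$. Unlike the weighted projective case there is no single Poincar\'e-dual class to factor out; instead I would use the projection formula $\int_V i^*\xi = \int_{\mathbb{P}_{\Delta}}\xi\cdot[V]$ with $[V]=\prod_{i=1}^m a_i$ only if I wanted a formula purely on $\mathbb{P}_{\Delta}$ — but since the statement is phrased with $\int_V$, I can simply restrict all classes to $V$ and integrate there, giving directly
$$\#\,\mathrm{Sing}(\mathcal{F}|_V)=\sum_{i=0}^{n-m}\int_V\Big\{\sum_{j+k=i}(-1)^j\mathcal{W}_j(a)\,\texttt{C}_k(h)\Big\}d^{\,n-m-i}.$$
The particular case $\chi(V)=(-1)^{n-m}\int_V c_{n-m}(\Omega_V^1)=\int_V c_{n-m}(\mathcal{T}V)$ follows by taking $d=0$ (equivalently, reading off the $i=n-m$ term), since $c_{n-m}(\mathcal{T}V)$ is the Euler class and $\int_V c_{n-m}(\mathcal{T}V)=\chi(V)$ by Poincar\'e–Hopf / Gauss–Bonnet on the smooth variety $V$.

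The only genuine subtlety — and the step I would treat most carefully — is justifying the normal bundle identification $\mathcal{N}_{V/\mathbb{P}_{\Delta}}\cong\bigoplus_{i=1}^m\mathcal{O}_{\mathbb{P}_{\Delta}}(a_i)|_V$ and the exactness of the conormal sequence in the toric setting where $a_i=\sum_j a_{ij}h_j$ need not be effective in an obvious way and $\mathbb{P}_{\Delta}$ may be non-projective; this is where I would cite the smoothness of $V$ as a complete intersection (so the $f_i$ form a regular sequence cutting out $V$ transversally) and the fact that $\mathbb{P}_{\Delta}$ is a smooth toric variety, so that the $\mathcal{O}_{\mathbb{P}_{\Delta}}(a_i)$ are genuine line bundles and the adjunction/conormal formalism applies verbatim as over projective space, referencing \cite{WAWA, EiHa, Mar} as in Theorem \ref{teo1}. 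Everything else is the bookkeeping of symmetric functions already carried out there.
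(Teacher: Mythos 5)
Your proposal is correct and follows essentially the same route as the paper: cite \cite[Theorem 1]{Mig} for $\#\,\mathrm{Sing}(\mathcal{F}|_V)=\int_V c_{n-m}(\mathcal{T}V(d))$, compute $c(V)$ from the normal bundle sequence together with the generalized Euler sequence as $\prod_i(1+h_i)/\prod_i(1+a_i)$, expand via $\texttt{C}_k(h)$ and $\mathcal{W}_j(a)$, and integrate; the normal bundle identification you flag is handled in the paper exactly as you suggest, by citing \cite[p.59]{Fu}.
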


\begin{proof}
From the exact sequence
$$0\rightarrow \mathcal{T}V \rightarrow \mathcal{T}\mathbb{P}_{\Delta}|_V\rightarrow \mathcal{N}_{\mathbb{P}_{\Delta}/V}\rightarrow 0,$$
with normal bundle $\mathcal{N}_{\mathbb{P}_{\Delta}/V}$, one has 
\begin{eqnarray} \label{ch}
c(V)=\frac{c\hspace{-0,5mm}\left(\mathcal{T}\mathbb{P}_{\Delta}|_V\right)}
{c\hspace{-0,5mm}\left(\mathcal{N}_{\mathbb{P}_{\Delta}/V}\right)},
\end{eqnarray}
where 
\begin{eqnarray} \label{ch1}
c\left(\mathcal{N}_{\mathbb{P}_{\Delta}/V}\right)=\prod_{i=1}^m
\big(1+c_1\left(\mathcal{O}_{\mathbb{P}_{\Delta}}(V_i)|_V\right)\big)=\prod_{i=1}^m(1+a_i),
\end{eqnarray} 
with $V_i=\left\{f_i=0\right\}$, see for instance \cite[p.59]{Fu}. On the other hand, using the generalized Euler's sequence, we have 
$$c\left(\mathcal{T}\mathbb{P}_{\Delta}\right)
=c\left(\mathcal{O}^{\oplus r}_{\mathbb{P}_{\Delta}}\right)c\left(\mathcal{T}\mathbb{P}_{\Delta}\right)
=c\left(\bigoplus_{i=1}^{n+r} \mathcal{O}_{\mathbb{P}_{\Delta}}(D_i)\right).$$
Note that $h_i=c_{1}(\mathcal{O}_{\mathbb{P}_{\Delta}}(D_i))=\left[D_i\right]$. Then
\begin{eqnarray} \label{ch2}
c\left(\mathcal{T}\mathbb{P}_{\Delta}\right)=
\prod_{i=1}^{n+r}c\big(\mathcal{O}_{\mathbb{P}_{\Delta}}(D_i)\big)=
\prod_{i=1}^{n+r}\Big(1+c_1\big(\mathcal{O}_{\mathbb{P}_{\Delta}}(D_i)\big)\Big)
=\prod_{i=1}^{n+r}(1+h_{i})=\sum_{k=0}^{n+r}\texttt{C}_{k}(h).
\end{eqnarray}

Substituting $(\ref{ch1})$, $(\ref{ch2})$ into $(\ref{ch})$ we obtain
\begin{eqnarray*}
c(V)&=&\left\{\sum_{j=0}^{\infty}(-1)^{j}\mathcal{W}_j(a)\right\}
\left\{\sum_{k=0}^{n+r}\texttt{C}_{k}(h)\right\} \\
&=& \sum_{i=0}^{n-m}\left\{\sum_{j+k=i}(-1)^{j}\mathcal{W}_j(a)\texttt{C}_{k}(h)\right\},
\end{eqnarray*}
so that
$$c_i(V)=\sum_{j+k=i}(-1)^{j}\mathcal{W}_j(a)\texttt{C}_{k}(h).$$
Hence,
\begin{eqnarray*}
c_{n-m}\hspace{-1mm}\left(\mathcal{T}V(d)\right)
&=&\sum_{i=0}^{n-m}c_i\hspace{-1mm}\left(V\right)
c_1\hspace{-1mm}\left(\mathcal{O}_{\mathbb{P}_{\Delta}}(d)|_V\right)^{n-m-i} \\
&=&\sum_{i=0}^{n-m}\left\{\sum_{j+k=i}(-1)^{j}\mathcal{W}_j(a)\texttt{C}_{k}(h)\right\}d^{\,n-m-i}.
\end{eqnarray*}

\vskip 0.2cm
\noindent Finally, we have

\begin{eqnarray*}
\#\,\mathrm{Sing}(\mathcal{F}|_V)&=&\int_Vc_{n-m}\hspace{-1mm}\left(\mathcal{T}V(d)\right) \\
&=& \sum_{i=0}^{n-m}\displaystyle\bigintsss_V\left\{\sum_{j+k=i}(-1)^{j}\mathcal{W}_j(a)\texttt{C}_{k}(h)\right\}d^{\,n-m-i}.
\end{eqnarray*}
\noindent This finishes the proof of the theorem.
\end{proof}

\begin{obs} Let $V=V(a_1,\ldots,a_m)\subset\mathbb{P}_{\Delta}$ be a smooth complete intersection, and 
let $\mathcal{D}$ be a codimension one distribution of degree $d$ on $V$ with isolated singularities, i.e., 
$\mathcal{D}$ is induced by a non-trivial section of $\omega\in H^{0}(V, \Omega^1_V(d))$. By Theorem \ref{tte}, 
it is easy to see that
$$\#\,\mathrm{Sing}(\mathcal{D})
=\sum_{i=0}^{n-m}(-1)^i\displaystyle\bigintsss_V\left\{\sum_{j+k=i}(-1)^{j}\mathcal{W}_j(a)\cdot\texttt{C}_{k}(h)\right\}d^{\,n-m-i}.$$
\end{obs}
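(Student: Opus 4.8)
The plan is to deduce this Remark directly from Theorem \ref{tte} by the same device used in Remark \ref{ob1} and in Corollary \ref{co1}: replacing a one-dimensional foliation by a codimension one distribution amounts to a sign change governed by the top Chern class of the appropriate twisted sheaf. Concretely, a codimension one distribution $\mathcal{D}$ of degree $d$ on $V$ is a nontrivial section $\omega\in H^0(V,\Omega^1_V(d))$, so its singular scheme, when zero-dimensional, is computed by the localization of the top Chern class $c_{n-m}\big(\Omega^1_V(d)\big)$ via Bott's residue formula (equivalently, \cite[Theorem 1]{Mig} applied to the cotangent side). Thus I would begin by recording $\#\,\mathrm{Sing}(\mathcal{D})=\int_V c_{n-m}\big(\Omega^1_V\otimes\mathcal{O}_{\mathbb{P}_{\Delta}}(d)|_V\big)$.

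Next I would relate $c_{n-m}\big(\Omega^1_V(d)\big)$ to the quantity already computed in the proof of Theorem \ref{tte}. Since $\Omega^1_V$ is the dual of $\mathcal{T}V$ on the $(n-m)$-dimensional suborbifold $V$, one has $c_i(\Omega^1_V)=(-1)^i c_i(\mathcal{T}V)$, and more generally for the rank $(n-m)$ bundle twisted by a line bundle,
\begin{eqnarray*}
c_{n-m}\big(\Omega^1_V(d)\big)
&=&\sum_{i=0}^{n-m}c_i(\Omega^1_V)\,c_1\big(\mathcal{O}_{\mathbb{P}_{\Delta}}(d)|_V\big)^{n-m-i}\\
&=&\sum_{i=0}^{n-m}(-1)^{i}c_i(\mathcal{T}V)\,d^{\,n-m-i}.
\end{eqnarray*}
Substituting the formula $c_i(V)=\sum_{j+k=i}(-1)^{j}\mathcal{W}_j(a)\texttt{C}_k(h)$ obtained in the proof of Theorem \ref{tte} yields exactly the asserted expression with the extra factor $(-1)^i$.

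Finally I would integrate over $V$ and invoke \cite[Theorem 1]{Mig} (Bott's residue formula on the orbifold/smooth toric setting) to identify the integral with $\#\,\mathrm{Sing}(\mathcal{D})$ counted with multiplicity, completing the argument; the statement that $\mathcal{D}$ is induced by a section of $\Omega^1_V(d)$ is precisely what makes the cotangent bundle the relevant one here, paralleling the distribution definitions in Section 2. I do not anticipate a genuine obstacle: the only point requiring a little care is the sign bookkeeping — correctly combining the $(-1)^i$ coming from dualizing $\mathcal{T}V$ with the $(-1)^j$ already present in $c_i(V)$ — but this is routine, and indeed the same computation underlies Remark \ref{ob1} in the weighted projective case. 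One should also note that no new vanishing hypothesis is needed beyond those already in force in this section (namely that $V$ is a smooth complete intersection in a smooth compact toric variety and that $\mathcal{D}$ has isolated singularities), so the Remark follows formally from Theorem \ref{tte}.
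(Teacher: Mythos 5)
Your argument is correct and is precisely the computation the paper leaves implicit: counting the zeros of the section $\omega\in H^{0}(V,\Omega^1_V(d))$ by $\int_V c_{n-m}\big(\Omega^1_V(d)\big)$, using $c_i(\Omega^1_V)=(-1)^i c_i(\mathcal{T}V)$ together with the expression for $c_i(V)$ from the proof of Theorem \ref{tte}, which produces exactly the extra factor $(-1)^i$. This matches how the analogous Remark \ref{ob1} follows from Theorem \ref{teo1}, so there is nothing to add.
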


Now, let us consider a Poincar\'e-type theorem for a smooth complete intersection curve on $\mathbb{P}_{\Delta}$.

\begin{defi}\cite{Fu}
If $V$ is a smooth subvariety of $\mathbb{P}_{\Delta}$, the $k$-degree of $V$, $\deg_k(V)$, is defined by 
$$\deg_k(V)=\int_Vc_{1}\left(\mathcal{O}_{\mathbb{P}_{\Delta}}(D_k)|_V\right)^{\dim(V)}.$$
\end{defi}
\vskip 0.1cm

\begin{cor} \label{ccrr}
Let $\mathcal{C}=V(a_1,\ldots,a_{n-1})\subset\mathbb{P}_{\Delta}$ be a smooth complete intersection curve.
Suppose $\mathcal{F}$ is a one-dimensional holomorphic foliation of degree $d$ on $\mathbb{P}_{\Delta}$ 
which leaves $\mathcal{C}$ invariant. Then 
$$\sum_{k=1}^{n+r} \left(\sum_{i=1}^{n-1}a_{ik}\right) \hspace{-0,5mm} \deg_k(\mathcal{C}) \leq \sum_{k=1}^{n+r} d_k \deg_k(\mathcal{C})
+\sum_{k=1}^{n+r} \deg_k(\mathcal{C}).$$
Alternatively,
$$\sum_{k=1}^{n-1} a_k \cdot\eta_{_\mathcal{C}} \leq d\cdot\eta_{_\mathcal{C}}-K_{\mathbb{P}_{\Delta}}\cdot\eta_{_\mathcal{C}},$$
\vskip 0.1cm
\noindent where $\eta_{_\mathcal{C}}$ is the Poincar\'e dual of $\mathcal{C}$.
\end{cor}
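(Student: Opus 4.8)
The plan is to specialize Theorem \ref{tte} to the case $m=n-1$, in which $V=\mathcal{C}$ is a curve of dimension $n-m=1$, and then to exploit the obvious fact that the left-hand side of the resulting identity is a nonnegative integer, since it counts the points of the singular scheme of $\mathcal{F}|_{\mathcal{C}}$ with multiplicity. This mirrors the argument used for Corollary \ref{cccor} in the weighted projective setting.

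Concretely, I would first observe that in the sum of Theorem \ref{tte} only the indices $i=0$ and $i=1$ survive: the $i=0$ term contributes $\mathcal{W}_0(a)\,\texttt{C}_0(h)\,d=d$, and the $i=1$ term contributes $\mathcal{W}_0(a)\,\texttt{C}_1(h)-\mathcal{W}_1(a)\,\texttt{C}_0(h)=\texttt{C}_1(h)-\mathcal{W}_1(a)$, so that
$$\#\,\mathrm{Sing}(\mathcal{F}|_{\mathcal{C}})=\int_{\mathcal{C}}\big(d+\texttt{C}_1(h)-\mathcal{W}_1(a)\big),$$
with $\mathcal{W}_1(a)=a_1+\cdots+a_{n-1}$, $\texttt{C}_1(h)=h_1+\cdots+h_{n+r}$, and $d=\sum_{k=1}^{n+r}d_kh_k$. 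Next I would record the two standard translations. First, $\texttt{C}_1(h)=\sum_{k}[D_k]=-K_{\mathbb{P}_{\Delta}}$, since the canonical class of a smooth complete toric variety is $-\sum_k D_k$. Second, because $\dim(\mathcal{C})=1$, the definition of $\deg_k$ gives $\int_{\mathcal{C}}h_k=\deg_k(\mathcal{C})$, hence $\int_{\mathcal{C}}d=\sum_k d_k\deg_k(\mathcal{C})$, $\int_{\mathcal{C}}\texttt{C}_1(h)=\sum_k\deg_k(\mathcal{C})$, and, expanding $a_i=\sum_j a_{ij}h_j$, $\int_{\mathcal{C}}\mathcal{W}_1(a)=\sum_k\big(\sum_i a_{ik}\big)\deg_k(\mathcal{C})$.

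From here the inequality $\#\,\mathrm{Sing}(\mathcal{F}|_{\mathcal{C}})\geq 0$ immediately yields the first displayed inequality of the statement. The second, more compact form follows by rewriting each $\int_{\mathcal{C}}(\cdot)$ as intersection with the Poincar\'e dual $\eta_{_\mathcal{C}}$ of $\mathcal{C}$ in $\mathbb{P}_{\Delta}$, i.e. $\int_{\mathcal{C}}\alpha=\alpha\cdot\eta_{_\mathcal{C}}$, together with the identity $\mathcal{W}_1(a)=\sum_{k=1}^{n-1}a_k$ viewed as a class on $\mathbb{P}_{\Delta}$, which turns the inequality into $\sum_{k=1}^{n-1}a_k\cdot\eta_{_\mathcal{C}}\leq d\cdot\eta_{_\mathcal{C}}-K_{\mathbb{P}_{\Delta}}\cdot\eta_{_\mathcal{C}}$.

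The argument is essentially bookkeeping once Theorem \ref{tte} is available, so I do not expect a serious obstacle; the only points requiring a little care are, first, checking that $\mathcal{F}|_{\mathcal{C}}$ really has isolated singularities—automatic on a curve as long as the section inducing $\mathcal{F}$ does not vanish identically along $\mathcal{C}$, so that Theorem \ref{tte} genuinely applies—and, second, keeping the dictionary between the homogeneous-coordinate data $(d_1,\dots,d_{n+r})$, $(a_{ij})$ and the intersection-theoretic quantities $\deg_k(\mathcal{C})$, $\eta_{_\mathcal{C}}$ consistent across the two equivalent formulations of the conclusion.
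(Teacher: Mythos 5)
Your proposal is correct and follows essentially the same route as the paper: specialize Theorem \ref{tte} to $m=n-1$ so that only the $i=0,1$ terms survive, giving $\#\,\mathrm{Sing}(\mathcal{F}|_{\mathcal{C}})=\int_{\mathcal{C}}\big(d+\texttt{C}_1(h)-\mathcal{W}_1(a)\big)$, translate the integrals into the degrees $\deg_k(\mathcal{C})$ via $a_i=\sum_j a_{ij}h_j$, and conclude from $\#\,\mathrm{Sing}(\mathcal{F}|_{\mathcal{C}})\geq 0$. Your additional remarks (the identification $\texttt{C}_1(h)=-K_{\mathbb{P}_{\Delta}}$ for the alternative formulation, and the need for $\mathcal{F}|_{\mathcal{C}}$ to have isolated singularities) are consistent with, and slightly more explicit than, what the paper records.
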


\vskip 0.1cm

\begin{proof}
By Theorem \ref{tte}, for $m=n-1$, and $i:\mathcal{C}\hookrightarrow\mathbb{P}_{\Delta}$ the inclusion, we have
\begin{eqnarray*}
\#\,\mathrm{Sing}(\mathcal{F}|_\mathcal{C})&=&
\sum_{i=0}^1\displaystyle\bigintsss_{\mathcal{C}}\left\{\sum_{j+k=i}(-1)^{j}\mathcal{W}_j(a)\texttt{C}_{k}(h)\right\}d^{\,1-i}\\ \\
&=&\int_{\mathcal{C}}i^{\ast}d+\int_{\mathcal{C}}i^{\ast}\texttt{C}_1(h)-\int_{\mathcal{C}}i^{\ast}\mathcal{W}_1(a) \\ \\
&=&\sum_k d_k \deg_k(\mathcal{C})+\sum_k \deg_k(\mathcal{C})
-\sum_i \sum_k a_{ik} \deg_k(\mathcal{C}).
\end{eqnarray*}
Therefore 
$$\sum_{k=1}^{n+r} \left(\sum_{i=1}^{n-1}a_{ik}\right) \hspace{-0,5mm} \deg_k(\mathcal{C}) \leq \sum_{k=1}^{n+r} d_k \deg_k(\mathcal{C})
+\sum_{k=1}^{n+r} \deg_k(\mathcal{C}).$$
\end{proof}

\begin{exe} For $\mathbb{P}_{\Delta}=\mathbb{P}^n$ this inequality can be improved. In fact, 
by positivity arguments, we have that $\#\,\mathrm{Sing}(\mathcal{F}|_\mathcal{C})>0$, when $d\geq1$; see \cite{Mar}. 
Then, since $\mathrm{Pic}(\mathbb{P}^n)=\mathbb{Z}\cdot\mathrm{H}$,
we see that $a_i=a_i\,\mathrm{H}$, $d=d\,\mathrm{H}$, and $\eta_{_\mathcal{C}}=a_1\cdots a_{n-1}\,\mathrm{H}^{n-1}$. Therefore,
$$\sum_{k=1}^{n-1}a_k\leq d+n.$$
This result was obtained in \cite{Mar} for projective spaces, where the author considered $d=\deg(\mathcal{F})-1$.
\end{exe}


\medskip


\begin{thebibliography}{99} 

\bibitem{Rua} Adem A., Leida J. \and Ruan Y.: \emph{Orbifolds and String Topology}. 
Cambridge University Press, Cambridge, UK, ISBN 0-511-28288-5, (2007).

\bibitem{Alu} Aluffi P.: \emph{Chern classes for singular hypersurfaces}. Trans. Amer. Math. Soc. 351,
no. 10, 3989-4026, (1999).

\bibitem{Ca} Araujo C., Corr\^ea M. \and Massarenti A.: \emph{Codimension one Fano distributions on Fano manifolds}.
Communications in Contemporary Mathematics. Vol. 20, No. 05, (2017).

\bibitem{AF} Araujo C. \and Figueredo J. P.:
\emph{Foliations on Complex Manifolds}.
Notices of the AMS, (2022), DOI: https://doi.org/10.1090/noti2507.

\bibitem{BFK} Barthel G., Brasselet J. P., Fieseler K. H. \and Kaup L.:
\emph{Diviseurs invariants et homomorphisme de Poincar\'e de vari\'et\'es toriques complexes}. 
Tohoku Mathematical Journal, Tohoku Math. J. (2) 48(3), 363-390, (1996).

\bibitem{Cox1} Batyrev V. \and Cox D. A.: \emph{On the Hodge Structure of Projective Hypersurfaces in Toric Varieties}. 
Duke Mathematical Journal. Vol. 75, No. 2, 293-338, (1994).

\bibitem{BB} Baum P. \and Bott R.: \emph{Singularities of Holomorphic Foliations}. 
J. Differential Geometry, 7, 279-342, (1972).

\bibitem{Bla} Blache R.: \emph{Chern classes and Hirzebruch-Riemann-Roch theorem for coherent sheaves on
complex-projective orbifolds with isolated singularities}. Mathematische Zeitschrift 222, 7-57, (1996).

\bibitem{Bru} Brunella M.: \emph{Birational geometry of foliations}. 
IMPA Monographs, Springer, (2015).

\bibitem{BruMe} Brunella M. \and Mendes L. G.: \emph{Bounding the degree of solutions to Pfaff equations}, 
Publ. Mat. Vol. 44, No. 2, 593-604, (2000).

\bibitem{Bru1} Brunella M.: \emph{Some remarks on indices of holomorphic vector fields}. Publ. Math. Vol. 41, No. 2, 527-544, (1997).

\bibitem{OmCoJa} Calvo-Andrade O., Corr\^ ea M. \and Jardim M.:  
\emph{Codimension One Holomorphic Distributions on the Projective Three-space}.  
International Mathematics Research Notices, Vol. 2020, Issue 23, Pages 9011-9074, (2020).

\bibitem{CCG} Campillo A., Carnicer M. M. \and Garc\'ia J. de la Fuente: \emph{Invariant curves by vector fields on algebraic varieties}. 
J. London Math. Soc. (2), 62(1):56-70, (2000).

\bibitem{Can} Carnicer M. M.: \emph{The Poincar\'e problem in the nondicritical case}. Ann. Math. Vol. 140, No. 2, 289-294, (1994).

\bibitem{CCM} Cavalcante A., Corr\^ea M. \and Marchesi S.:
\emph{On holomorphic distributions on Fano threefolds}.
Journal of Pure and Applied Algebra, Vol. 224, Issue 6, 106272, (2020).

\bibitem{Alc} Cerveau D. \and Lins Neto A.: \emph{Holomorphic foliations in $\mathbb{P}^2$ having an invariant algebraic curve}. 
Ann. Inst. Fourier. Vol. 41, No. 4, 883-903, (1991).

\bibitem{Mig} Corr\^ea M., Rodr\'iguez M. \and Soares M. G.: \emph{A Bott-Type Residue Formula on Complex Orbifolds}. 
Int. Math. Res. Notices. Vol 2016, Issue 10, Pages 2889-2911, (2016).

\bibitem{CJ} Corr\^ea M. \and Jard\'in M.: \emph{Bounds for sectional genera of varieties invariant under Pfaff fields}.
Illinois Journal of Mathematics. Vol 56, No 2, 343-352, (2012).

\bibitem{Mau} Corr\^ea M.: \emph{Darboux-Jouanolou-Ghys integrability for one-dimensional foliations on toric varieties}.
Bulletin des Sciences Math\'ematiques. Vol. 134, No. 7, 693-704, (2010).

\bibitem{Coo} Corr\^ea M.:
\emph{On classification and moduli spaces of holomorphic distributions}. 
Matem\'atica Contempor\^anea (SBM), Vol. 53, 23-51, (2023).

\bibitem{MaDi} Corr\^ea M. \and Diogo M.: \emph{Residue formulas for logarithmic foliations and applications}
Trans. Amer. Math. Soc. Vol. 371, no 9(1012), pp. 6403-6420, JSTOR, (2019).

\bibitem{Cox3} Cox D. A. \and Katz S.: \emph{Mirror Symmetry and Algebraic Geometry}. Mathematical Surveys and Monographs.
American Mathematical Society. Vol. 68, (1999).

\bibitem{Cox} Cox D. A.: \emph{The Homogeneous Coordinate Ring of a Toric Variety}. 
J. Algebraic Geometry 4, 17-50, (1995).

\bibitem{Cox2} Cox D. A., Little J. \and Schenck H.: \emph{Toric Varieties}. Graduate Studies in Mathematics. 
American Mathematical Society. Vol. 124, (2011).

\bibitem{CE} Cruz J. D. A. \and Esteves E.: 
\emph{Bounding the regularity of subschemes invariant under Pfaff fields on projective spaces}. 
Commentarii Mathematici Helvetici. Vol 86, No 4, 947-965, (2011).

\bibitem{Ding} Ding W. \and Tian G.: \emph{K\"ahler-Einstein metrics and the generalized Futaki invariant}. Invent. Math. 110, 315-335, (1992).

\bibitem{Do} Dolgachev I.: \emph{Weighted projective varieties}. 
Lecture Notes in Mathematics. Springer Verlag. Vol. 956, (1982).

\bibitem{Dru} Druel S.: \emph{Regular foliations on weak Fano manifolds}. 
In Annales de la Facult\'e des sciences de Toulouse: Math\'ematiques, Vol. 26, pages 207-217, (2017).

\bibitem{Dru2} Druel S.:
\emph{Structures de contact sur les vari\'et\'es toriques}. 
Math Ann 313, 429-435, (1999).

\bibitem{EiHa} Eisenbud D. \and Harris J.: \emph{$3264$ $\&$ All That Intersection Theory in Algebraic Geometry}.
Cambridge University Press, (2016). 

\bibitem{EstKle} Esteves E. \and Kleiman S. L.: \emph{Bounding solutions of Pfaff equations}. Commun. Algebra 31, No. 8, 3771-3793, (2003).

\bibitem{Es} Esteves E.: \emph{The Castelnuovo-Mumford regularity of an integral variety of a vector field on projective space}.
Math. Res. Letters 9, 1-15, (2002).

\bibitem{FJM} Faenzi D., Jardim M. \and Montoya W. D.: \emph{Logarithmic vector fields and foliations on toric varieties}.
https://arxiv.org/pdf/2408.10671.

\bibitem{Fig} Figueredo J. P.:
\emph{Codimension One Regular Foliations on Rationally Connected Threefolds}.
Bull Braz Math Soc, New Series 53, 1131-1144, (2022).

\bibitem{Fle} Flenner H.: \emph{Divisorenklassengruppen quasihomogener Singularit\"aten}.
J. Reine Angew. Math. 328, 128-160, (1981).

\bibitem{Fu} Fulton W.: \emph{Intersection Theory}. Springer-Verlag New York, (1998).

\bibitem{Fu1} Fulton W.: \emph{Introduction to Toric Varieties}. Princeton University Press (1993).

\bibitem{GJM} Galeano H., Jardim M. \and Muniz A.:
\emph{Codimension one distributions of degree 2 on the three-dimensional projective space}.
Journal of Pure and Applied Algebra, Vol. 226, Issue 2, 106840, (2022). 

\bibitem{Gal} Galindo C. \and Monserrat F.: \emph{The Poincar\'e problem, algebraic integrability and dicritical divisors}. 
J. Differ. Equ. 256, 3614-3633, (2014).

\bibitem{GHS} Glover H. H., Homer W. D. \and Stong R. E.: \emph{Splitting the tangent bundle of projective space}. 
Indiana Univ. Math. J. 31, no. 2, 161-166, (1982).

\bibitem{Grif} Griffiths P. \and Harris J.: \emph{Principles of Algebraic Geometry}. John Wiley \& Sons, 
New York, (1978).

\bibitem{Sata} Ichiro Satake: 
\emph{The Gauss-Bonnet theorem for V-manifolds}. J. Math. Soc. Japan. Vol. 9, 464-492, (1957).

\bibitem{Ii} Iitaka S.: \emph{Logarithmic forms of algebraic varieties}. 
J. Fac. Sci. Univ. Tokyo Sect. IA Math. 23, no. 3, 525-544, (1976).

\bibitem{Iza} Izawa T.: \emph{Residues of codimension one singular holomorphic distributions}. Bull. Braz. Math.
Soc., New Series, (2008). 

\bibitem{Jou} Joaunolou J. P.: \emph{\'Equations de Pfaff alg\'ebriques}. Lecture Notes in Math. Vol. 708, Springer, (1979).

\bibitem{KPS} Kebekus S., Peternell T., Sommese A. et al.: 
\emph{Projective contact manifolds}. 
Invent. math. 142, 1-15, (2000). 

\bibitem{Man} Mann E.: \emph{Cohomologie quantique orbifolde des espaces projectifs \`a poids}. 
J. Algebraic Geometry 17, 137-166, (2008).

\bibitem{MaA} Massarenti A.: \emph{Toric varieties, mori dream spaces and cox rings}. (2020).
Available at \url{http://mcs.unife.it/alex.massarenti/files/Notes_Toric_Cox.pdf}.

\bibitem{Ma} Mavlyutov Anvar R.: \emph{Cohomology of complete intersections in toric varieties}.
Pacific Journal of Mathematics. Vol. 191, No. 1, (1999).

\bibitem{SoR} Soares M. G.: \emph{Lectures on point residues}. Monografias de IMCA $\mbox{n}^{\circ}28$, 
Instituto de Matem\'atica y Ciencias Afines, Per\'u, ISBN 9972-899-09-8, (2002).

\bibitem{Mar} Soares. M. G.: \emph{Projective varieties invariant by one-dimensional foliations}. 
Ann. of Math. (2), 152(2): 369-382, (2000).

\bibitem{Su} Suwa T.: \emph{Indices of vector fields and residues of singular holomorphic foliations}. 
Actualit\'es Math\'ematiques, Hermann, Paris, (1998).

\bibitem{NoKo} Norimatsu Y. \and Kodaira: \emph{Vanishing Theorem and Chern Classes for $\partial$-Manifolds}.
Proc. Japan Acad., 54, Ser. A., 107-108, (1978).

\bibitem{OSS} Okonek C., Schneider M. \and Spindler H.: \emph{Vector bundles on complex projective spaces}.
Modern Birkh\"auser Classics, vol. 1, Birkh\"auser Basel, (1988).

\bibitem{Pere} Pereira J. V.: \emph{On the Poincar\'e problem for foliations of general type}. Math. Ann. 323, No. 2, 217-226, (2002).

\bibitem{Poin} Poincar\'e M. H.: \emph{Sur l'int\'egration alg\'ebrique des \'equations diff\'erentielles du premier ordre et du
premier degr\'e}. Rend. Circ. Mat. Palermo, vol. 5, 161-191, (1891).

\bibitem{Re} Reid M.: \emph{Chapters on algebraic surfaces}.
Available at https://arxiv.org/abs/alg-geom/9602006, (1996).

\bibitem{Mig2} Rodr\'iguez A. M.: 
\emph{On codimension one holomorphic distributions on compact toric orbifolds}.
International Journal of Mathematics, Vol. 35, No. 7, 2450024, (2024).

\bibitem{Ro} Rodr\'iguez A. M.: \emph{The Poincar\'e problem for foliations on compact toric orbifolds}. 
Geometriae Dedicata 215, 333-353, (2021).

\bibitem{SiR} Silvotti R.: \emph{On a conjecture of Varchenko}. Invent. Math. 126, no. 2, 235-248, (1996).

\bibitem{So} Soares M. G.: \emph{The Poincar\'e problem for hypersurfaces
invariant by one-dimensional foliations}. Inventiones Mathematicae.
Vol. 128, 495-500, (1997).

\bibitem{Mar} Soares. M. G.:
\emph{Projective varieties invariant by one-dimensional foliations}. Ann. of Math. (2), 152(2), 369-382, (2000).

\bibitem{Tu} Thurston W. P.: \emph{Existence of Codimension-One Foliations}. 
Annals of Mathematics, Vol. 104, no. 2, pp. 249-68, (1976).

\bibitem{WAWA} Wang JianBo \and Wang YuYu: \emph{Homotopy type of smooth weighted complete intersections}.
Science China Mathematics, Vol. 53, No. 6: 1583-1594, (2010).

\bibitem{WWA} Wang W.: \emph{Toric foliated minimal model program}. Journal of Algebra 632, 70-86, (2023).

\end{thebibliography}
\end{document}